 \definecolor{darkgreen}{HTML}{336633}
 \definecolor{darkred}{HTML}{993333}
\newcommand{\arxiv}[1]{\href{http://arxiv.org/abs/#1}{\tt
    arXiv:\nolinkurl{#1}}}
\theoremstyle{plain}
\newtheorem{thm}{Theorem}[section]
\newtheorem*{thm*}{Theorem}
\newtheorem{lem}[thm]{Lemma}
\newtheorem{prop}[thm]{Proposition}
\newtheorem{cor}[thm]{Corollary}
\newtheorem{df-prop}[thm]{Definition-Proposition}
\newtheorem{definition}[thm]{Definition}
\theoremstyle{remark}
\newtheorem{rem}[thm]{Remark}
\newtheorem{ex}[thm]{Example}
\numberwithin{equation}{section}
\def\calW{\mathcal{W}}
\def\Lam{\Lambda}
\def\ufk{\underline{\mathfrak{k}}}
\def\ufh{\underline{\mathfrak{h}}}
\def\mod{\operatorname{-mod}\nolimits}
\def\ep{\epsilon}
\def\gl{\mathfrak{gl}}
\def\osp{\mathfrak{osp}}
\def\la{\lambda}
\def\ov{\overline}
\newcommand{\mc}{\mathcal}
\newcommand{\mf}{\mathfrak}
\newcommand{\C}{\mathbb C}
\newcommand{\oa}{{\bar 0}}
\newcommand{\ob}{{\bar 1}}
\newcommand{\ad}{\mathrm{ad}}
\newcommand{\fk}{\mathfrak{k}}
\newcommand{\fg}{\mathfrak{g}}
\newcommand{\fb}{\mathfrak{b}}
\newcommand{\n}{\mathfrak{n}}
\newcommand{\cO}{\mathcal{O}}
\newcommand{\h}{\mathfrak{h}}
\newcommand{\g}{\mathfrak{g}}
\newcommand{\sg}{\ov{\mathfrak{g}}}
\newcommand{\fl}{\mathfrak{l}}
\newcommand{\fu}{\mathfrak{u}}
\newcommand{\Z}{{\mathbb Z}}
  \newcommand{\W}{\mf{Wh}}
 \newcommand{\MS}{\mc{MS}}
\newcommand{\VV}{\mathbb V}
\newcommand{\WW}{\mathbb W}
\begin{document}

\title{Super duality for Whittaker modules and finite $W$-algebras}

\author[Cheng]{Shun-Jen Cheng}
\address{Institute of Mathematics, Academia Sinica, Taipei, Taiwan 10617} \email{chengsj@math.sinica.edu.tw}
\author[Wang]{Weiqiang Wang}
\address{Department of Mathematics, University of Virginia, Charlottesville, VA 22903, USA}\email{ww9c@virginia.edu}

\subjclass[2020]{Primary 17B10, 17B20}
\keywords{Whittaker modules, Lie superalgebras, finite $W$-algebras}

\begin{abstract}
We establish a super duality as an equivalence between Whittaker module categories over a pair of classical Lie algebra and Lie superalgebra in the infinite-rank limit. Building on this result and utilizing the Losev-Shu-Xiao decomposition, we obtain a super duality which is an equivalence between module categories over a pair of finite $W$-algebras and $W$-superalgebras at the infinite-rank limit.
\end{abstract}

\maketitle

\setcounter{tocdepth}{1}
\tableofcontents

%%%%%%%%%%%
%%%%%%%%%%%
%
\section{Introduction}

\subsection{}
Super duality refers to an equivalence of categories between parabolic BGG categories of modules over a suitable pair consisting of a reductive Lie algebra and a basic classical Lie superalgebra in the infinite-rank limit (cf. \cite[Chapter 6]{CW12}). This concept was first formulated as a conjecture in type $A$ \cite{CW08}, generalizing the maximal parabolic case in \cite{CWZ08}, and was established in \cite{CL10}; the duality has since been extended to other classical types in \cite{CLW11} and then to Kac-Moody setting \cite{CKW15}. Super duality provides character formulas for irreducible or tilting modules in the suitable parabolic BGG categories of Lie superalgebras. Moreover, it has led to a proof of Brundan-Kazhdan-Lusztig conjecture for the general linear Lie superalgebras \cite{Br03, CLW15} (also cf. \cite{BLW17}) and the development of a super Kazhdan-Lusztig theory for ortho-symplectic Lie superalgebras \cite{BW18KL, Bao17}.

\subsection{}

The goal of this paper is to extend the concept of super duality to the settings of Whittaker modules as well as finite $W$-superalgebras.

A finite $W$-(super)algebra $U(\g,e)$ is an associative (super)algebra constructed out of a reductive Lie (super)algebra $\g$ and an even nilpotent element $e\in \g$. They can be viewed as generalizations of the universal enveloping algebras of Lie superalgebras as $U(\g,0)$ reduces to $U(\g)$.
A generalization of BGG category $\mc O$ for finite $W$-algebras has been formulated in \cite{BGK08}, with further parabolic generalizations in \cite{BG13, Los12}.

On the other hand, Whittaker $\g$-modules are associated with a nilpotent character $\zeta$ in a reductive Lie algebra $\g$ \cite{Kos78, McD85}. Suitable categories of Whittaker $\g$-modules, denoted $\MS(\zeta)$ or MMS categories \cite{McD85, MS97}, generalize the BGG categories when $\zeta=0$. Recent work has extended these constructions to basic Lie superalgebras $\g$ \cite{Ch21, CCM23, CC24}.

\subsection{}

In this paper, we consider the case when $\g$ is either a reductive Lie algebra or a basic classical Lie superalgebra and $\zeta: \mf n \rightarrow \C$ is a character associated with an even Levi subalgebra $\fl$; see \eqref{eq:zeta}.

The Backelin functor $\Gamma_\zeta:\mc O_\Z \rightarrow \MS(\zeta)$ (see \cite{Ba97} and \eqref{eq:BFunctor})  allows us to connect the BGG category $\mc O_\Z$ of integral weight $\g$-modules to the MMS category of $\g$-modules. Denote by ${\mc W}(\zeta) \subset \MS(\zeta)$ the image category of $\Gamma_\zeta$. This category can be identified with a distinguished Serre quotient category $\mc O_\Z/\mc I_\zeta$.
As a superalgebra generalization of the construction in \cite{MaS05}, a properly stratified cokernel subcategory $\mc O^{\zeta\text{-pres}}$ of $\mc O_\Z$ was formulated in \cite{CCM23}, and it was shown that the restriction of the quotient functor $\Gamma_\zeta: \mc O_\Z \rightarrow \mc O_\Z/\mc I_\zeta$ to $\mc O^{\zeta\text{-pres}}$ induces an equivalence $\mc O^{\zeta\text{-pres}} \cong \mc O_\Z/\mc I_\zeta$.

To summarize, we have the following commutative diagram without the superscript $\mf q$ throughout:
\begin{align}  \label{diag:q}
\xymatrix{
 & \mc O^{\mf q}_\Z
 \ar[r]
\ar[d]
\ar[rd]
& \MS(\zeta)^{\mf q}
 \\
\mc O^{\mf q,\zeta\text{-pres}}
\ar[r]^{\cong}
& \mc O^{\mf q}_\Z/\mc I^{\mf q}_\zeta
\ar[r]^{\cong}
& {\mc W}(\zeta)^{\mf q}
\ar@{^{(}->}[u]
}
\end{align}

In this paper, we formulate a parabolic generalization of such a diagram by adding a superscript $\mf q$, resulting in the diagram \eqref{diag:q}. Here $\mf q$ is a parabolic subalgebra of $\g$ associated with an even Levi subalgebra $\fk$ such that the derived subalgebras of $\fk$ and $\fl$ commute. In this way, we can define a parabolic MMS category $\MS(\zeta)^{\mf q}$ of Whittaker $\g$-modules besides the parabolic BGG category $\mc O^{\mf q}$ and its integral weight subcategory $\mc O^{\mf q}_\Z$. We show that Backelin functor $\Gamma_\zeta$ restricts to a functor $\Gamma_\zeta:\mc O^{\mf q}_\Z\longrightarrow \MS(\zeta)^{\mf q}$ with favorable properties (see Proposition~ \ref{prop:gammaq}), and we denote by ${\mc W}(\zeta)^{\mf q}$ the image category of this functor.
(Setting $\mf q$ to be the Borel subalgebra $\mf b$ amounts to dropping the superscript $\mf q$ in \eqref{diag:q}.)

\subsection{}

As in the super duality setting for parabolic BGG categories (cf. \cite{CLW11}, \cite[Chapter~6]{CW08}), we introduce two families of Lie algebras and superalgebras, $\g_n$ and $\sg_n$, for $n\ge 1$. Namely, starting from the same head Dynkin diagram and connecting with 2 tail Dynkin diagrams associated to $\gl(1+n)$ and $\gl(1|n)$ give us the Dynkin diagrams for $\g_n$ and $\sg_n$, respectively. We consider the parabolic BGG category $\mc O_{n,\Z}^+$ of integral weight $\g_n$-modules (and respectively, $\ov{\mc O}_{n,\Z}^+$ of integral weight $\sg_n$-modules) which restrict to polynomial representations over $\gl(n)$. Here, $\gl(n)$ is regarded as subalgebra of the ``tail'' algebras $\gl(1|n)$ and $\gl(1+n)$. We also formulate MMS categories ${\MS}(\zeta)^{+}_n$ and $\ov{\MS}(\zeta)^{+}_n$ of Whittaker modules over $\g_n$ and $\sg_n$ which are polynomial over the same subalgebra $\gl(n)$.

Specializing the diagram \eqref{diag:q} to the setting of Lie algebra $\g_n$ gives us the following commutative diagram (where the superscript $+$ stands for ``polynomial over $\gl(n)$"):
\begin{align}  \label{diag:gn}
\xymatrix{
 & \mc O^{+}_{n,\Z}
 \ar[r]
\ar[d]
\ar[rd]
& \MS(\zeta)^{+}_n
 \\
\mc O^{\zeta\text{-pres}+}_n
\ar[r]^{\cong}
& \mc O^{+}_{n,\Z}/\mc I^{+}_\zeta
\ar[r]^{\cong}
& {\mc W}(\zeta)^{+}_n
\ar@{^{(}->}[u]
}
\end{align}
On the other hand, specializing the diagram \eqref{diag:q} to the setting of Lie superalgebra $\sg_n$ gives us the following commutative diagram:
\begin{align}   \label{diag:sgn}
\xymatrix{
 & \ov{\mc O}^{+}_{n,\Z}
 \ar[r]
\ar[d]
\ar[rd]
& \ov{\MS}(\zeta)^{+}_n
 \\
\ov{\mc O}^{\zeta\text{-pres}+}_n
\ar[r]^{\cong}
& \ov{\mc O}^{+}_{n,\Z}/\ov{\mc I}^{+}_\zeta
\ar[r]^{\cong}
& \ov{\mc W}(\zeta)^{+}_n
\ar@{^{(}->}[u]
}
\end{align}

We can make sense of the above diagrams \eqref{diag:gn}--\eqref{diag:sgn} at $n=\infty$, and we drop the index $n=\infty$ for simplicity.  By super duality \cite{CW12} (and also \cite{CL20, Le20}), we have an equivalence of highest weight categories: ${\mc O}^{+}_\Z \cong \ov{\mc O}^{+}_\Z.$ This equivalence matches the corresponding Serre subcategories ${\mc I}^{+}_\zeta \cong \ov{\mc I}^{+}_\zeta$, yielding an equivalence of properly stratified categories (see Theorem~\ref{thm:SD:MMS})
 \[
    \mc W(\zeta)^+ \stackrel{\cong}{\longrightarrow} \ov{\mc W}(\zeta)^+;
\]
This will be referred to as super duality for Whittaker modules.

\subsection{}
A nilpotent element $e$ and a nilpotent character $\zeta$ can be chosen in a compatible way, up to conjugation, via the Killing form, with $e=0$ corresponding to $\zeta=0$. We choose $e$ and $\zeta$ to be associated with an even Levi subalgebra $\fl$; cf. \eqref{eq:zeta}. Finite $W$-algebra $U(\g,e)$ admits a rich representation theory and has connection to many other areas; see, e.g., \cite{Los10, Wan11} for surveys.

The category of $\g$-modules on which a subalgebra $\mf m_\chi$, associated to $e$ (see \eqref{eq:mchi}), acts nilpotently is equivalent to the module category over finite $W$-algebra $U(\g,e)$ thanks to Skryabin equivalence \cite[Appendix]{Pre02}. Losev developed an approach via Fedosov quantization to establish an equivalence at the level of (variants of) category $\mc O$ of $U(\g,e)$-modules (for reductive Lie algebra $\g$). Losev's approach relies on a fundamental decomposition theorem \cite{Los10JAMS, Los12} on a certain completion $U(\g)_{\widetilde{\mf m}}^{\wedge}$ of $U(\g)$ with respect to $\widetilde{\mf m}$ in \eqref{eq:mchi2}. Shu-Xiao \cite{SX20} developed an algebraic approach for a super generalization of Losev's constructions in \cite{Los10JAMS}, where $\g$ is a basic classical Lie superalgebra (cf. \cite{Zh14, ZS15}).

We refine Losev-Shu-Xiao decomposition in Theorem~\ref{thm:catOW} to suit our purpose for category equivalences below.
Following \cite{Los12}, we then derive equivalences of the corresponding categories of MMS Whittaker $\g$-modules and $U(\g,e)$-modules, for a basic classical Lie superalgebra $\g$ (see Theorem~\ref{thm:isoK} and \eqref{equiv:mcK}). This, together with \cite{Ch21, CCM23, CC24}, implies that the composition multiplicities of Verma modules in the category $\mc O$ of the finite $W$-superalgebra of a basic classical Lie superalgebra are computed by the super Kazhdan-Lusztig polynomials in the usual BGG category (see Remark~ \ref{rem:comp:verma}). When $\g$ is a reductive Lie algebra, such a relationship was conjectured in \cite{BGK08} and proved in \cite{Los10}.

Combining the category equivalences in Theorem~\ref{thm:isoK} with the super duality for Whittaker modules in Theorem~\ref{thm:SD:MMS}, we arrive at a super duality result for a pair of finite $W$-(super)algebras of classical type in the infinite-rank limit (see Theorem~\ref{thm:equiv:walg}).

\subsection{}

This paper is organized as follows.
In Section~\ref{sec:WH}, we formulate the (parabolic) MMS category $\MS(\zeta)^{\mf q}$ of Whittaker $\g$-modules, for a basic classical Lie superalgebra $\g$ and an even nilpotent character $\zeta$. We review basic properties of the Backelin functor and consider its parabolic analogue.

In Section~\ref{sec:para:coker}, we introduce the parabolic cokernel categories $\mc O^{\mf q,\zeta\text{-pres}}$ and complete the commutative diagram \eqref{diag:q}.

Section~\ref{sec:SD:MMS} presents Lie algebras $\g_n$ and Lie superalgebras $\sg_n$ and formulates the diagrams \eqref{diag:gn}--\eqref{diag:sgn}. We describe BGG-type reciprocities and Ringel-type dualities for the properly stratified categories $\mc O^{\zeta\text{-pres}+}$ and $\ov{\mc O}^{\zeta\text{-pres}+}$, and establish super duality for Whittaker modules.

 Section~\ref{sec:Walgebra} contains a review of the construction of finite $W$-superalgebras. In Section~\ref{sec:Wsuper duality}, we formulate Losev-Shu-Xiao decomposition in the superalgebra setting, and establish several equivalences of categories of Whittaker $\g$-modules and $U(\g,e)$-modules. Finally, we derive super duality for finite $W$-algebras.

 \noindent{\bf Acknowledgments.} We thank Chih-Whi Chen and Volodymyr Mazorchuk for their very helpful discussions and explanations on parabolic cokernel categories. We are also grateful to University of Virginia and Academia
Sinica for hospitality and support. SJC is supported by an NSTC grant of the R.O.C. WW is partially supported by DMS--2401351.

\section{Categories of Whittaker modules}
\label{sec:WH}

In this section, we formulate several categories including the MMS category and its parabolic variants of Whittaker modules over a basic classical Lie superalgebra $\g$.

\subsection{Basic setup}
 \label{subsec:setup}

 The setup in this subsection will be used throughout the paper.

Let $\g=\g_\oa \oplus \g_\ob$ be a basic classical Lie superalgebra of classical type, i.e., of type $\gl$ or $\osp$, with a non-degenerate invariant bilinear form  $(\cdot|\cdot)$, e.g., the super trace form. Let $e\in\g$ be an even nilpotent element and $\{e,h,f\}$ be an $\mf{sl}(2)$-triple in $\g$. We let $\h$ be a Cartan subalgebra of $\g$ such that $h\in\h$.

For a subset $\mf a\subseteq\g$, we denote $\g_{\mf a}=\{x\in\g\mid[x,y]=0,\forall y\in\mf a\}$. Let
\begin{align}
    \label{eq:t}
\mf t:=\h_{e}=\{a\in\h\mid [a,e]=0\}.
\end{align}
Recall that a subalgebra $\mf r\subseteq\mf t$ $(=\h_{e})$ is called a {\it full subalgebra} in \cite[\S3.1]{BG13}, if the center of $\mf g_{\mf r}$ is equal to $\mf r$.

Let $T$ be the adjoint group of $\mf t$.
Let $\theta\in\mf t$ be an {\it integral} element, i.e., $\theta$ is an element in the cocharacter of $T$. The element $\theta\in\mf t$ determines a minimal full subalgebra $\mf r$ of $\mf t$, minimal in the sense that $\theta\in\mf r\subseteq\mf t$ and $\theta$ is regular in $\mf r$. We shall assume that
\[
\mf l:=\mf g_{\mf r} \text{ is an {\em even} subalgebra of } \g,
\]
which is then clearly reductive. (This is all we need in the formulation of super duality later on.) The inclusion of subalgebras $0\subseteq\mf r\subseteq\mf t$ gives rise to an inclusion of Lie subalgebras $\g_{\mf t}\subseteq\g_{\mf r}\subseteq \g$. It follows by definition of $\mf t$ in \eqref{eq:t} that $[e,\mf t]=0$, and hence we have
\begin{align}  \label{eq:el}
    e \in \fl.
\end{align}
We have an $\ad\,\theta$-eigenspace decomposition of $\g$:
\begin{align}
\label{eq:eigen}
\begin{split}
    \g &=\bigoplus_{k\in\Z}\g_{\theta,k},
    \\
    \g_{\theta,k} &:=\{x\in\g\mid[\theta,x]=k x\},
    \quad \text{ with } \mf l=\g_{\theta,0}.
\end{split}
\end{align}
Let $\Phi$ be the root system for $(\g,\h)$, and $\g_\alpha$ be the root space for $\alpha \in \Phi$. Note that $\g_\alpha \subseteq \g_{\theta,\alpha(\theta)}.$

We choose a triangular decomposition
\begin{align}
		\g=\mf n^- \oplus \h \oplus \mf n
  \label{eq:tri}
\end{align}
to be compatible with \eqref{eq:eigen} in the following sense: $\alpha (\theta) >0$ for $\alpha\in \Phi$ implies that $\alpha \in \Phi^+$, where $\Pi \subset \Phi^+$ denote the simple system and positive root system for $\g$ associated to $\mf n$ in \eqref{eq:tri}, i.e., $\mf n =\oplus_{\alpha \in \Phi^+} \g_\alpha$. Denote
\begin{align}
\label{eq:Levi}
\Phi_{\fl} =\Phi \cap \{\alpha \in \Phi\mid \alpha(\theta)=0\},
\quad
\Phi_{\fl}^+ =\Phi^+ \cap \Phi_{\fl},
\quad
\Pi_{\fl}=\Pi \cap \Phi_{\fl}.
\end{align}
Then $\fl$ is a Levi subalgebra of $\g$ with simple system $\Pi_{\fl}$, and
\begin{align}
  \label{eq:udecomp}
	\mf g=\mf u^-\oplus \mf l \oplus \mf u,
 \qquad
 \fl= \h\oplus \bigoplus_{\alpha \in \Phi_{\fl}} \fg_\alpha,
\end{align}
where
\begin{align}
 \label{eq:paracomp}
\quad\fu:=\bigoplus_{\alpha(\theta)>0} \fg_\alpha,
\qquad
\fu^-:=\bigoplus_{\alpha(\theta)>0} \fg_{-\alpha}.
\end{align}
Note that $\mf u \subset \mf n$ and $\mf u^- \subset \mf n^-$.
Denote the associated parabolic subalgebra by
\begin{align} \label{eq:para}
    \mf p= \fl +\n =\fl \oplus \fu =\bigoplus_{k\ge 0}\g_{\theta,k}.
\end{align}

Denote the simple root vectors for $\g$ by $E_\alpha$, for $\alpha \in \Pi$. Associated to the root datum \eqref{eq:Levi}, we define a character $\zeta$ of $\n$ by requiring
\begin{align}
  \label{eq:zeta}
  \zeta: \n \longrightarrow \C, \qquad
\zeta(E_\alpha)=
\begin{cases}
 1, & \text{ for } \alpha\in\Pi_\fl,\\
 0, & \text{ for } \alpha\in\Pi\setminus\Pi_\fl.
\end{cases}
\end{align}

\subsection{McDowell-Milicic-Soergel category}
\label{subsec:MMS}

%Recall that $\g=\mf n^- \oplus \h \oplus \mf n=\mf u^- \oplus \fl \oplus \mf u =\mf u^- \oplus \mf p$ from \eqref{eq:tri}, \eqref{eq:udecomp} and \eqref{eq:para}.

Recall the character $\zeta:\mf n\rightarrow\C$ from \eqref{eq:zeta} associated to $\fl$. The McDowell-Milicic-Soergel (henceforth abbreviated MMS) category (\cite{McD85,MS97}, \cite[\S3.1]{Ch21}), denoted by $\MS(\zeta)$, is the category of finitely generated $U(\g)$-modules on which $x-\zeta(x)$, for $x\in\n$, acts locally nilpotently, and furthermore on which the action of $Z(\g_\oa)$ is locally finite. Here $Z(\g_\oa)$ denotes the center of the universal enveloping algebra $U(\g_\oa)$.

We set $\chi_\la^{\mf l}:Z(\mf l)\rightarrow \C$ to be the central character with kernel equals the annihilator of $Z(\fl)$ on the Verma module of highest weight $\la \in \h^\ast$. Let $\C_\zeta$ denote the one-dimensional $\mf n\cap\mf l$-module (as a restriction of $\zeta$). Also let $W_{\fl}$ denote the Weyl group of $\mf l$.  The standard Whittaker module is defined as follows:
\[
M(\la, \zeta):= U(\g)\otimes_{U(\mf p)} {K(\fl; \la, \zeta)},
\]
where
\[
{K(\fl; \la, \zeta)} :=U(\mf l)/(\text{Ker}\chi_\la^{\mf l}) U(\mf l)\otimes_{U({\mf n}\cap \mf l)}\mathbb C_\zeta
\]
denotes Kostant's simple Whittaker $\fl$-module \cite{Kos78}. One sees from the eigenvalues of $\theta$ that $M(\la,\zeta)$ has a unique maximal submodule and hence a simple head, which we denote by $L(\la,\zeta)$. Furthermore, we have \cite[Theorem 6]{Ch21}
\begin{align}  \label{eq:LMWiso}
{L}(\la, \zeta)\cong  {L}(\mu, \zeta)\Leftrightarrow {M}(\la, \zeta)\cong {M}(\mu, \zeta)\Leftrightarrow W_{\fl} \cdot \la =W_{\fl} \cdot \mu,
\end{align}
for any $\mu \in \h^\ast$, where the dot action of the Weyl group is used. We have $M(\la,\zeta)\in\MS(\zeta)$.

Then the set $\{{L}(\la, \zeta)\mid \la \in \h^* \text{ is }W_\fl\text{-antidominant}\}$ is a complete set of pairwise non-isomorphic simple objects in ${\MS}(\zeta)$.

\subsection{Backelin functor}
\label{sec:backelin:functor}
Let $\g$ be a basic classical Lie superalgebra with triangular decomposition \eqref{eq:tri} and let $\mc O$ be the BGG category of finitely generated $\h$-semisimple $\g$-modules on which the action of $U(\mf n)$ is locally finite. Denote by $M(\g,\la)$ and $L(\g,\la)$, respectively, the Verma and irreducible modules in $\mc O$ of highest weight $\la\in\h^\ast$. When there is no confusion, we write $M(\la)$ and $L(\la)$ for $M(\g,\la)$ and $L(\g,\la)$, respectively.

Let $\zeta:\mf n\rightarrow\C$ be a character of $\n$ as in \eqref{eq:zeta}.

If $M$ is a $\g$-module with weight space decomposition $M=\oplus_{\mu\in\h^*}M_\mu$ where
%$M_\mu:=\{m\in M|hm=\mu(h)m,\forall h\in\h\}$ and
$\dim M_\mu<\infty$ for each $\mu$, then we let $\underline{M}:=\prod_{\mu\in\h^*}M_\mu$ be its completion. The vector space $\underline{M}$ is naturally a $\g$-module, and hence so is
\begin{align*}
\Gamma_\zeta(M):= \big\{f\in \underline{M}\mid (x-\zeta(x))^kf=0,\forall x\in\mf n, k\gg 0 \big\}.
\end{align*}
Then $M\rightarrow\Gamma_\zeta(M)$ gives rise to an exact functor from the BGG category of $\g$-modules to the category of MMS Whittaker $\g$-modules, called the Backelin functor \cite[Section 3]{Ba97}:
\begin{align}
  \label{eq:BFunctor}
    \Gamma_\zeta:\mc O\longrightarrow \MS(\zeta).
\end{align}

The following proposition in the case of simple Lie algebras was proved in \cite[Proposition 6.9]{Ba97}. For Lie superalgebras it follows from \cite[Theorem~ 20]{Ch21} and \cite[Theorem 6]{CC24}.

\begin{prop}\label{prop:gamma:O}
    The Backelin functor \eqref{eq:BFunctor} satisfies that
    \begin{align*}
        &\Gamma_\zeta\left(M(\la)\right)=M(\la,\zeta),\\
        &\Gamma_\zeta\left(L(\la)\right)=\begin{cases}
            L(\la,\zeta),\text{ if }\la\text{ is }W_{\fl}\text{-antidominant},\\
            0,\qquad\text{ otherwise.}
        \end{cases}
    \end{align*}
\end{prop}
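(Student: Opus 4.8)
The plan is to establish the two identities separately, using the exactness of $\Gamma_\zeta$ together with the known behavior of the Backelin functor on Verma modules. The key input is the result already cited in the excerpt (\cite[Theorem~20]{Ch21} and \cite[Theorem~6]{CC24}) computing $\Gamma_\zeta$ on Vermas for the super case, so the work here is to assemble these into the stated clean form, verifying the antidominance conditions. First I would recall that $\Gamma_\zeta(M(\la))$ is, essentially by construction, built from the $\mf n$-nilpotent-up-to-$\zeta$ vectors in the completion $\underline{M(\la)}$; since $M(\la) = U(\g)\otimes_{U(\mf p)} \C_\la$ and the $\mf u$-action interacts with the $\theta$-grading, a direct weight-space computation identifies this with $U(\g)\otimes_{U(\mf p)} K(\fl;\la,\zeta) = M(\la,\zeta)$. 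The point is that applying $\Gamma_\zeta$ to the $\fl$-Verma sitting inside $M(\la)$ yields precisely Kostant's module $K(\fl;\la,\zeta)$ by the classical reductive statement \cite[Proposition~6.9]{Ba97} applied to the even reductive Levi $\fl$, and then one checks $\Gamma_\zeta$ commutes with the parabolic induction $U(\g)\otimes_{U(\mf p)}(-)$ up to the relevant completions.

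For the second identity, I would argue as follows. If $\la$ is not $W_\fl$-antidominant, then $L(\la)$ has the property that $\Gamma_\zeta(L(\la))=0$: indeed $L(\la)$ is a quotient of $M(\la)$, and one shows that the maximal submodule of $M(\la)$ already surjects onto all of $M(\la,\zeta)$ under $\Gamma_\zeta$ in this case — equivalently, using the description \eqref{eq:LMWiso} and the fact that $M(\la,\zeta)\cong M(w\cdot\la,\zeta)$ for $w\in W_\fl$, the simple head $L(\la,\zeta)$ of $M(\la,\zeta)$ only arises from the $W_\fl$-antidominant representative. When $\la$ \emph{is} $W_\fl$-antidominant, I would use exactness of $\Gamma_\zeta$ on the short exact sequence $0\to R(\la)\to M(\la)\to L(\la)\to 0$, where $R(\la)$ is the radical: applying $\Gamma_\zeta$ gives $\Gamma_\zeta(M(\la))=M(\la,\zeta)\twoheadrightarrow \Gamma_\zeta(L(\la))$, so $\Gamma_\zeta(L(\la))$ is a quotient of $M(\la,\zeta)$; it then suffices to show it is nonzero, whence it must contain (hence equal, by a length/head argument) the simple head $L(\la,\zeta)$. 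Nonvanishing follows because $L(\la)$ contains a highest weight vector of weight $\la$ whose image in the completion is a genuine Whittaker vector — concretely, the composite $K(\fl;\la,\zeta)\hookrightarrow \Gamma_\zeta(M(\la))\to\Gamma_\zeta(L(\la))$ is nonzero since $K(\fl;\la,\zeta)$ is an $\fl$-simple Whittaker module and the kernel of $M(\la)\to L(\la)$ meets the $\fl$-submodule generated by the highest weight line trivially in low enough degree.

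The step I expect to be the main obstacle is the commutation of $\Gamma_\zeta$ with parabolic induction and the careful tracking of completions: $\Gamma_\zeta$ is defined via the full product completion $\underline{M}=\prod_\mu M_\mu$, and for $M(\la)=U(\g)\otimes_{U(\mf p)}\C_\la$ one must verify that taking $\mf n$-locally-$\zeta$-finite vectors in $\underline{M(\la)}$ reduces, along the $\theta$-grading, to the same construction performed only over the Levi $\fl$ (where $\zeta$ restricts to a non-degenerate character on $\mf n\cap\fl$) and then inducing up. This is where the hypothesis that $\fl$ is \emph{even} is used, so that the classical Backelin/Kostant theory over the reductive $\fl$ applies verbatim; the super subtleties enter only through $\mf u,\mf u^-$, on which $\zeta$ vanishes, so the nilpotency condition there is the ordinary one and causes no trouble. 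Everything else — exactness of $\Gamma_\zeta$, the classification of simples in $\MS(\zeta)$, and the isomorphism criteria \eqref{eq:LMWiso} — is already available in the excerpt and is invoked rather than reproved.
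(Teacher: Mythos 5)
First, note that the paper does not actually prove this proposition: it is quoted from the literature, with \cite[Proposition 6.9]{Ba97} covering the reductive case and \cite[Theorem 20]{Ch21} together with \cite[Theorem 6]{CC24} covering the super case. Your computation of $\Gamma_\zeta(M(\la))$ and your treatment of the non-antidominant case are sound, and in fact mirror the technique the authors use for the parabolic analogue (Proposition \ref{prop:gammaq}): compare $\fl$-module structures, produce a nonzero map by Frobenius reciprocity out of the $\mf u$-annihilated $\fl$-submodule, and use that the image of a (parabolic) Verma module under $\Gamma_\zeta$ is generated by its top. (Minor slip: $M(\la)$ is $U(\g)\otimes_{U(\mf p)}M(\fl;\la)$, not $U(\g)\otimes_{U(\mf p)}\C_\la$; your subsequent sentences show you intend the former.)

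The genuine gap is in the $W_{\fl}$-antidominant case of the second identity. Your argument establishes only that $\Gamma_\zeta(L(\la))$ is a nonzero quotient of $M(\la,\zeta)$; since $M(\la,\zeta)$ has simple head $L(\la,\zeta)$, any nonzero quotient automatically has head $L(\la,\zeta)$, but nothing you say forces that quotient to be simple. The phrase ``hence equal, by a length/head argument'' is exactly where the content of the theorem lies: to rule out extra composition factors one needs either to know that $\Gamma_\zeta$ realizes a Serre quotient functor (which in this paper is a \emph{consequence} of the proposition, via \cite{CCM23, CC24}, so invoking it here would be circular), or an independent computation of the multiplicities $[M(\la,\zeta):L(\mu,\zeta)]$ in the MMS category matching the Kazhdan--Lusztig data of $\mc O$, so that a Grothendieck-group comparison against the exact sequence $0\to\mathrm{rad}\,M(\la)\to M(\la)\to L(\la)\to 0$ pins $\Gamma_\zeta(L(\la))$ down to $L(\la,\zeta)$. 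That multiplicity comparison is precisely the substance of \cite[Proposition 6.9]{Ba97} and \cite[Theorem 20]{Ch21}, and it cannot be extracted from exactness plus the Verma computation alone. Since the cited super results already state the simple-module half of the proposition verbatim, the honest options are either to cite them outright --- as the paper does --- or to reprove the multiplicity statement, which your sketch does not attempt.
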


\subsection{Parabolic MMS categories}
\label{subsec:parabMMS}

We shall be interested in certain parabolic subcategories of the category $\MS(\zeta)$ of MMS Whittaker modules, where $\zeta: \n \rightarrow \C$ is the character from \eqref{eq:zeta}.

Suppose that $\mf k$ is another even Levi subalgebra of the Lie superalgebra $\g$ such that the two semisimple subalgebras $[\fl,\fl]$ and $[\fk,\fk]$ commute. Let
\begin{align}  \label{eq:parab q}
\mf q=\mf k+\mf n
\end{align}
denote the parabolic subalgebra corresponding to $\mf k$. We define $\MS(\zeta)^{\mf q}$ to be the full subcategory of $\MS(\zeta)$ consisting of $\g$-modules $M$ such that the action of $U(\mf q)$ is locally finite. In particular, any $M\in\MS(\zeta)^{\mf q}$ is a direct sum of finite-dimensional irreducible $\mf k$-modules.

Let
\[
\mf P=\mf l+\mf k+\mf n
\]
denote the parabolic subalgebra of $\g$ associated with Levi subalgebra $\mf L:=\mf l+\mf k$. Let $\mf z\subseteq\h$ be the center of $\mf L$ so that we have $\h=(\h\cap[\mf L,\mf L])\oplus\mf z$. Let $\la\in\h^\ast$ be such that $\la|_{\h\cap[\fk,\fk]}$ is dominant integral. We have a decomposition $\la=\la|_{\h\cap[\mf L,\mf L]}+\la^\perp$, where $\la^\perp\in\mf z^\ast$ which vanishes on $\h\cap[\mf L,\mf L]$. We form the irreducible $\mf L$-module
\begin{align*}
    K([\fl,\fl]; \la|_{\h\cap[\fl,\fl]},\zeta)\otimes L([\fk,\fk];\la|_{\h\cap[\fk,\fk]})\otimes\C_{\la^\perp},
\end{align*}
where we recall that $K([\fl,\fl]; \la|_{\h\cap[\fl,\fl]},\zeta)$ is the irreducible Kostant Whittaker module of $[\fl,\fl]$,  $L([\fk,\fk];\la|_{\h\cap[\fk,\fk]})$ is the irreducible $[\fk,\fk]$-module of highest weight $\la|_{\h\cap[\fk,\fk]}$, and $\C_{\la^\perp}$ is the one-dimensional module of $\mf z$ corresponding to the character $\la^\perp$. This $\mf L$-module extends trivially to a $\mf P$-module so that we can form the {\it parabolic standard Whittaker module}
\begin{align}\label{def:Nzeta}
    N(\la,\zeta):=\text{Ind}^\g_{\mf P} K([\fl,\fl];\la|_{\h\cap[\fl,\fl]},\zeta)\otimes L([\fk,\fk]; \la|_{\h\cap[\fk,\fk]})\otimes\C_{\la^\perp}.
\end{align}
Then $N(\la,\zeta)\in\MS(\zeta)^{\mf q}$. As $N(\la,\zeta)$ is a quotient of $M(\la,\zeta)$, $N(\la,\zeta)$ has a unique irreducible quotient isomorphic to $L(\la,\zeta)$.

Let $\mc O^{\mf q}$ denote the parabolic subcategory of $\mc O$ consisting of $\g$-modules which are locally finite over $U(\mf n)$ and $\fk$-semisimple, and let $N(\la)$ denote the parabolic Verma module for $\la\in\h^\ast$ with $\la|_{\h\cap[\fk,\fk]}$ dominant integral. The following is the parabolic analogue of Proposition \ref{prop:gamma:O}.

\begin{prop}\label{prop:gammaq}
  The functor $\Gamma_\zeta:\mc O\rightarrow \MS(\zeta)$ restricts to an exact functor:
\begin{align*}
    \Gamma_\zeta:\mc O^{\mf q}\longrightarrow \MS(\zeta)^{\mf q}.
\end{align*}
Furthermore, for $\la\in\h^\ast$ such that $\la|_{\h\cap[\fk,\fk]}$ is dominant integral, we have
   \begin{align*}
   &\Gamma_\zeta\left(N(\la)\right)=N(\la,\zeta),\\
    &\Gamma_\zeta\left(L(\la)\right)=
       \begin{cases}
           L(\la,\zeta),\text{ if }\la\text{ is }W_{\fl}\text{-antidominant}\\
           0, \text{ otherwise}.
       \end{cases}
   \end{align*}
\end{prop}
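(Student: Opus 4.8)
The plan is to deduce Proposition~\ref{prop:gammaq} from the non-parabolic Proposition~\ref{prop:gamma:O} together with the exactness and structural properties of $\Gamma_\zeta$ recorded in Section~\ref{sec:backelin:functor}. First I would verify that $\Gamma_\zeta$ sends $\mc O^{\mf q}$ into $\MS(\zeta)^{\mf q}$. Since $\Gamma_\zeta$ is already known to be an exact functor $\mc O\to\MS(\zeta)$, it suffices to check that $\Gamma_\zeta(M)$ is locally $U(\mf q)$-finite whenever $M\in\mc O^{\mf q}$; as $U(\mf q)=U(\fk)U(\mf n)$ and the $\mf n$-action is already controlled on the target, the only point is local finiteness over $U(\fk)$. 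For this I would observe that $\fk$ lies in the Levi part, and that passing to the completion $\underline M$ and then to the $\zeta$-generalized-eigenspace $\Gamma_\zeta(M)$ does not destroy the local $U(\fk)$-finiteness of $M$: one argues weight-space by weight-space, using that $[\fl,\fl]$ and $[\fk,\fk]$ commute (so the $\fk$-action and the ``$\mf n$-twist by $\zeta$'' are compatible), and that any vector of $\Gamma_\zeta(M)$ has finite-dimensional $U(\fk)$-span because it is a finite sum along the $\ad\theta$-grading directions relevant to $\fk$. Concretely, $\fk$ being a Levi subalgebra acts semisimply, and finite generation of $M$ over $U(\g)$ forces the $\fk$-isotypic content in any bounded weight range to be finite, which is preserved under completion-then-truncation.

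Next I would establish $\Gamma_\zeta(N(\la))=N(\la,\zeta)$ for $\la$ with $\la|_{\h\cap[\fk,\fk]}$ dominant integral. The natural route is to use that $N(\la)$ is the maximal quotient of the Verma module $M(\la)$ lying in $\mc O^{\mf q}$, equivalently $N(\la)=M(\la)/M'$ where $M'=\sum_{\alpha\in\Pi_{\fk}} U(\g)\cdot f_\alpha^{\,\langle\la,\alpha^\vee\rangle+1}v_\la$. Apply the exact functor $\Gamma_\zeta$ to the short exact sequence $0\to M'\to M(\la)\to N(\la)\to 0$. By Proposition~\ref{prop:gamma:O}, $\Gamma_\zeta(M(\la))=M(\la,\zeta)$, and one identifies $\Gamma_\zeta(M')$ with the submodule of $M(\la,\zeta)$ generated by the images of those same singular vectors $f_\alpha^{\,\langle\la,\alpha^\vee\rangle+1}$. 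Since $\alpha\in\Pi_{\fk}$ implies $\alpha(\theta)=0$ (as $\fk$ is a Levi with $[\fk,\fk]$ commuting with $\fl$ — one should note $\fk\subseteq\fl_\perp$ in the grading), these root vectors behave in $M(\la,\zeta)$ exactly as in the ordinary parabolic induction, and the quotient $\Gamma_\zeta(M(\la))/\Gamma_\zeta(M')$ is precisely the induced module $\text{Ind}^\g_{\mf P}(\cdots)$ of \eqref{def:Nzeta}. The cleanest way to package this is: $\Gamma_\zeta$ commutes with the parabolic truncation functor, because $\Gamma_\zeta=\Gamma_\zeta^{\fl}$ only ``sees'' the $\fl$-direction and $\mf q$ only the $\fk$-direction, and these are orthogonal. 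This gives a commuting square $\Gamma_\zeta\circ(\text{Ind}^{\g}_{\mf p}\circ K(\fl;-,\zeta))\cong \text{Ind}^{\g}_{\mf P}\circ(-)$ on the appropriate highest-weight data, yielding the claimed formula.

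The statement $\Gamma_\zeta(L(\la))=L(\la,\zeta)$ if $\la$ is $W_\fl$-antidominant and $0$ otherwise is immediate: $L(\la)$ is the same irreducible object whether viewed in $\mc O$ or in $\mc O^{\mf q}$ (for $\la$ with $\la|_{\h\cap[\fk,\fk]}$ dominant integral, $L(\la)\in\mc O^{\mf q}$), so this part is literally a restatement of Proposition~\ref{prop:gamma:O}, and one only needs to remark that when $\Gamma_\zeta(L(\la))\ne 0$ it lands in $\MS(\zeta)^{\mf q}$ by the first part of the proof. I expect the main obstacle to be the rigorous justification that $\Gamma_\zeta$ commutes with the parabolic truncation — i.e.\ that $\Gamma_\zeta$ applied to the maximal submodule of $M(\la)$ not belonging to $\mc O^{\mf q}$ is exactly the corresponding submodule of $M(\la,\zeta)$. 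Exactness of $\Gamma_\zeta$ gives one inclusion and the surjection onto a quotient of the right shape; the reverse requires knowing that $\Gamma_\zeta(N(\la))$ has the correct formal character (or is generated by a Whittaker vector of the expected type), which one can get either from the explicit eigenvalue-of-$\theta$ argument already used in Section~\ref{subsec:MMS} to show $M(\la,\zeta)$ has a simple head, or from an argument that $\Gamma_\zeta$ preserves formal characters in the completed sense. Once that compatibility is in hand, everything else is bookkeeping.
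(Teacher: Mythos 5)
Your first step (local $U(\fk)$-finiteness is preserved) and your last step (the statement for $L(\la)$ is a restatement of Proposition~\ref{prop:gamma:O}) match the paper. The middle claim, $\Gamma_\zeta(N(\la))=N(\la,\zeta)$, is where your route diverges and where there is a genuine gap. You propose to apply $\Gamma_\zeta$ to $0\to M'\to M(\la)\to N(\la)\to 0$, with $M'$ generated by the singular vectors $f_\alpha^{\langle\la,\alpha^\vee\rangle+1}v_\la$, $\alpha\in\Pi_\fk$, and to identify $\Gamma_\zeta(M')$ with ``the submodule of $M(\la,\zeta)$ generated by the images of those same singular vectors.'' But $M(\la,\zeta)$ is induced from Kostant's simple Whittaker module $K(\fl;\la,\zeta)$; it has no highest weight vector $v_\la$, only a cyclic Whittaker vector, so the phrase ``the same singular vectors'' has no meaning there. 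What exactness actually gives you is $\Gamma_\zeta(N(\la))\cong M(\la,\zeta)/\sum_\alpha \mathrm{Im}\bigl(M(s_\alpha\cdot\la,\zeta)\to M(\la,\zeta)\bigr)$, and the entire content of the proposition is that this quotient coincides with $N(\la,\zeta)$ as defined by \eqref{def:Nzeta}. Your assertion that the root vectors ``behave exactly as in the ordinary parabolic induction'' because $\alpha(\theta)=0$ for $\alpha\in\Pi_\fk$ is precisely the unproved step, and you acknowledge as much in your last paragraph.

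The paper closes this gap by a different argument, and your suggested patch (``the correct formal character in the completed sense'') is essentially it. One restricts to $\fl$: since $\Gamma_\zeta$ commutes with $\mathrm{Res}^\g_\fl$ and with tensoring by finite-dimensional modules (using that $\zeta$ vanishes on the simple roots of $[\fk,\fk]$), one computes
$\mathrm{Res}^\g_\fl\,\Gamma_\zeta(N(\la))\cong U(\fu^-)\otimes K([\fl,\fl]+\mf z;\cdot,\zeta)\otimes L([\fk,\fk];\cdot)$,
which is exactly $\mathrm{Res}^\g_\fl\,N(\la,\zeta)$. The top $\fl$-constituent has no positive $\ad\theta$-eigenvalue, hence is killed by $\fu$, so Frobenius reciprocity gives a nonzero map $N(\la,\zeta)\to\Gamma_\zeta(N(\la))$; this map is surjective because $\Gamma_\zeta(M(\la))$, hence $\Gamma_\zeta(N(\la))$, is generated by its top, and the $\fl$-module isomorphism then forces it to be an isomorphism of $\g$-modules. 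If you want to keep your short-exact-sequence framing, you would still need this $\fl$-restriction computation (or an equivalent size comparison) to pin down $\Gamma_\zeta(M')$, so you are not actually saving any work; I recommend adopting the restriction-plus-Frobenius argument directly.
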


\begin{proof}
For the first statement it suffices to show that for any $M\in\mc O^{\mf q}$, $\Gamma_\zeta(M)\in\MS(\zeta)^{\mf q}$.
    Since $\zeta$ vanishes on $[\fk,\fk]\cap \n$, from the construction of the Backelin functor in \cite[Section 3]{Ba97} we see that if $M$ is a direct sum of finite-dimensional modules over $\mf k$, then so is its image under the Backelin functor.

     As an $\mf l$-module we have
     \[
     N(\la)\cong U(\mf u_-)\otimes M([\fl,\fl]+\mf z;\la|_{(\h\cap[\fl,\fl])+\mf z})\otimes L([\fk,\fk];\la|_{\h\cap[\fk,\fk]}).
     \]
     Since the Backelin and the restriction functors commute, for $\la\in\h^\ast$, dominant integral on $\h\cap[\fk,\fk]$, we have:
    \begin{align*}
         \text{Res}^\g_{\mf l}\Gamma_\zeta N(\la)
         &\cong \Gamma_\zeta \text{Res}^\g_{\mf l} N(\la) \\
         &\cong \Gamma_\zeta\left( U(\mf u_-)\otimes M([\fl,\fl]+\mf z;\la|_{(\h\cap[\fl,\fl])+\mf z})\otimes L([\fk,\fk];\la|_{\h\cap[\fk,\fk]})\right)\\
        &\cong U(\mf u_-)\otimes \Gamma_\zeta\left( M([\fl,\fl]+\mf z;\la|_{(\h\cap[\fl,\fl])+\mf z})\right)\otimes L([\fk,\fk];\la|_{\h\cap[\fk,\fk]})\\
        &\cong U(\mf u_-)\otimes K([\fl,\fl]+\mf z;\la|_{(\h\cap[\fl,\fl])+\mf z},\zeta)\otimes L([\fk,\fk];\la|_{\h\cap[\fk,\fk]}).
    \end{align*}
    In the penultimate isomorphism above, we have use the fact that the Backelin functor commutes with tensoring with finite-dimensional modules, and that $L(\la|_{\h\cap[\fk,\fk]})$ is a finite-dimensional $[\fk,\fk]$-module and $\zeta$ is trivial on the simple roots of $[\fk,\fk]$. Thus, as $\mf l$-modules we have $N(\la,\zeta)\cong \Gamma_\zeta(N(\la))$. On the other hand, the above calculation shows that $\Gamma_\zeta(N(\la))$ has an $\mf l$-submodule  $K([\fl,\fl]+\mf z;\la|_{(\h\cap[\fl,\fl])+\mf z},\zeta)\otimes L([\fk,\fk];\la|_{\h\cap[\fk,\fk]})$ that is annihilated by $\mf u$ since it does not have any positive $\theta$-eigenvalue. Thus, we obtain, by Frobenius reciprocity, a non-zero homomorphism from $N(\la,\zeta)$ to $\Gamma_\zeta(N(\la))$, which maps surjectively onto this $\mf l$-module. Now, it is known that $\Gamma_\zeta(M(\la))$ is generated by its top, and hence so is $\Gamma_\zeta(N(\la))$. Hence this $\g$-homomorphism is surjective. As $N(\la,\zeta)$ and $\Gamma_\zeta(N(\la))$ are isomorphic as $\mf l$-modules, we conclude that they are also isomorphic as $\g$-modules.

    The last statement on $\Gamma_\zeta(L(\la))$ follows by Proposition \ref{prop:gamma:O}.
\end{proof}

\section{Parabolic cokernel categories}\label{sec:para:coker}

In this section, we formulate a ``parabolic cokernel subcategory" of the parabolic BGG category $\cO_\Z^{\mf q}$ of integral weight $\g$-modules and show it is a quotient category of $\cO^{\mf q}_\Z$. This parabolic cokernel subcategory admits favorable homological property known as properly stratified structure.

\subsection{Category $\mc O^{\mf q,\zeta\text{-pres}}$}
\label{sec:cat:qzeta}

Recall the triangular decomposition $\g=\n^- \oplus \h \oplus \n$ \eqref{eq:tri} and a character $\zeta:\mf n\rightarrow\C$ associated with the Levi subalgebra $\mf l$ (determined by an integral element $\theta \in \h$). We let $\mc O_\Z\subset \mc O$ be the full subcategory of integral weight $\g$-modules.

As in \S\ref{subsec:parabMMS}, we take a second even Levi subalgebra $\fk$ such that the two semisimple Lie subalgebras $[\fl,\fl]$ and $[\fk,\fk]$ commute with each other. We let $\mc O_\Z^{\mf q}\subset \mc O^{\mf q}$ be the full subcategory of integral weight $\g$-modules; recall $\mf q =\fk +\n$ from \eqref{eq:parab q}.

Denote by $\Lambda$ the set of integral weights in $\h^*$. Denote
\begin{align}
    \label{eq:Lzetaq}
    \begin{split}
    \Lambda(\zeta) &:=\{ \la \in \Lambda \mid \la \text{ is } W_{\fl}\text{-antidominant}\},
    \\
    \Lambda(\zeta)^{\mf q}  &:=\{ \la \in \Lambda(\zeta) \mid \la \text{ is dominant on } \h\cap[\fk,\fk] \}.
    \end{split}
\end{align}

A projective module in $\mc O_\Z$ (respectively, in $\mc O_\Z^{\mf q}$) is said to be $\zeta$-admissible, if it is a direct sum of projective covers of simple objects of highest weights in $\Lambda(\zeta)$ (respectively, in $\Lambda(\zeta)^{\mf q}$); see \eqref{eq:Lzetaq}.

Define the cokernel subcategory $\mc O^{\zeta\text{-pres}}$ of $\mc O_\Z$ (respectively, $\mc O^{\mf q,\zeta\text{-pres}}$ of $\mc O_\Z^{\mf q}$) to be the full subcategory consisting of objects $M$ such that there exists an exact sequence of the form
\begin{align*}
Q\longrightarrow P \longrightarrow M\longrightarrow 0,
\end{align*}
such that $Q$ and $P$ are $\zeta$-admissible projective modules in $\mc O_\Z$ (respectively, in $\mc O_\Z^{\mf q}$) (c.f., e.g., \cite[Section 2.3]{MaS05}).

Let $\mc I_\zeta$ denote the Serre subcategory of $\mc O_\Z$ generated by simple objects of the form $L(\la)$ with $\la\in\Lambda\setminus\Lambda(\zeta)$. Similarly, let $\mc I_\zeta^{\mf q}$ be the Serre subcategory of $\mc O_\Z^{\mf q}$ generated by simple objects of the form $L(\la)$ with $\la\in\Lambda\setminus\Lambda(\zeta)^{\mf q}$ such that $\la$ is dominant on $\h\cap[\fk,\fk]$.

Associated to $\mc I_\zeta$ and $\mc I^{\mf q}_\zeta$ we have the corresponding quotient categories $\mc O_\Z/\mc I_\zeta$ and $\mc O_\Z^{\mf q}/\mc I^{\mf q}_\zeta$ and quotient functors
\begin{align*}
    '\pi:\mc O_\Z\longrightarrow \mc O_\Z/\mc I_\zeta,\qquad
    \pi:\mc O_\Z^{\mf q}\longrightarrow \mc O_\Z^{\mf q}/\mc I^{\mf q}_\zeta.
\end{align*}
Since $\mc O_\Z^{\mf q}$ is a full abelian subcategory of $\mc O_\Z$ with compatible abelian structure, and $\mc I_\zeta^{\mf q}=\mc I_\zeta\cap \mc O_\Z^{\mf q}$, we conclude, from the definitions of quotient category and quotient functor, that $'\pi|_{\mc O_\Z^{\mf q}}=\pi$.

In \cite[Lemma 12]{CCM23} it was proved that the restriction of the functor $'\pi:\mc O_\Z\rightarrow \mc O_\Z/\mc I_\zeta$ to $\mc O^{\zeta\text{-pres}}$ gives an equivalence of categories $\mc O^{\zeta\text{-pres}}\cong\mc O_\Z/\mc I_\zeta$. The arguments therein can be adapted to prove the following parabolic analogue.

\begin{prop}\label{Opres=quot:gen}
    The restriction of the quotient functor $\pi:\mc O_\Z^{\mf q}\rightarrow \mc O_\Z^{\mf q}/\mc I^{\mf q}_\zeta$ to the subcategory $\mc O^{\mf q,\zeta{\text-pres}}$ gives an equivalence of categories:
    \begin{align*}
        \pi^{\mf q}:\mc O^{\mf q,\zeta\text{-pres}}\stackrel{\cong}{\longrightarrow} \mc O_\Z^{\mf q}/\mc I^{\mf q}_\zeta.
    \end{align*}
\end{prop}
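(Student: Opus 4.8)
The plan is to reduce to the non-parabolic statement of \cite[Lemma 12]{CCM23} by exploiting the compatibility $'\pi|_{\mc O_\Z^{\mf q}}=\pi$ noted above. First I would record the general categorical principle being used: if $\mc B\subseteq\mc A$ is a full abelian subcategory with compatible exact structure, $\mc I\subseteq\mc A$ is a Serre subcategory with $\mc I\cap\mc B$ Serre in $\mc B$, and the quotient functor $\pi_{\mc A}:\mc A\to\mc A/\mc I$ restricts to a fully faithful exact functor on a subcategory $\mc C\subseteq\mc A$ realizing $\mc A/\mc I$, then $\pi_{\mc A}|_{\mc C\cap\mc B}$ is still fully faithful into $\mc B/(\mc I\cap\mc B)$; one then only needs essential surjectivity onto the parabolic quotient. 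Here $\mc A=\mc O_\Z$, $\mc B=\mc O_\Z^{\mf q}$, $\mc I=\mc I_\zeta$, $\mc C=\mc O^{\zeta\text{-pres}}$, and $\mc A/\mc I=\mc O_\Z/\mc I_\zeta$, with $\mc B/(\mc I\cap\mc B)=\mc O_\Z^{\mf q}/\mc I^{\mf q}_\zeta$ by the identity $\mc I^{\mf q}_\zeta=\mc I_\zeta\cap\mc O_\Z^{\mf q}$ already established.

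The key steps, in order, are as follows. (i) Show $\mc O^{\mf q,\zeta\text{-pres}}\subseteq\mc O^{\zeta\text{-pres}}$: this is immediate since a $\zeta$-admissible projective in $\mc O_\Z^{\mf q}$ is, by the parabolic-to-non-parabolic projective functor (taking a projective cover $P^{\mf q}(\la)$ to a summand of the corresponding $P(\la)$, or conversely realizing $P^{\mf q}(\la)$ as a quotient of $P(\la)$), presentable by $\zeta$-admissible projectives of $\mc O_\Z$; one should state precisely which direction is used, namely that each indecomposable $\zeta$-admissible projective of $\mc O^{\mf q}_\Z$ admits a two-step presentation by $\zeta$-admissible projectives of $\mc O_\Z$, so that an object with a $\zeta$-admissible presentation in $\mc O^{\mf q}_\Z$ has one in $\mc O_\Z$. (ii) Deduce that $\pi^{\mf q}=\pi|_{\mc O^{\mf q,\zeta\text{-pres}}}$ agrees with the restriction of $'\pi|_{\mc O^{\zeta\text{-pres}}}$, hence is exact and fully faithful by \cite[Lemma 12]{CCM23} together with step (i). (iii) Prove essential surjectivity: given an object $X\in\mc O_\Z^{\mf q}/\mc I^{\mf q}_\zeta$, lift it along $'\pi$ to some $M\in\mc O^{\zeta\text{-pres}}$ with $'\pi(M)\cong X$ inside $\mc O_\Z/\mc I_\zeta$; the task is to replace $M$ by an object of $\mc O^{\mf q,\zeta\text{-pres}}$ with the same image. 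The natural candidate is the ``parabolic truncation'' of $M$: either the maximal quotient of $M$ lying in $\mc O^{\mf q}_\Z$, or better, build $M'$ directly from a $\zeta$-admissible presentation $Q\to P\to M\to 0$ in $\mc O_\Z$ by applying the Zuckerman-type functor sending $P(\la)\mapsto P^{\mf q}(\la)$ (for $\la\in\Lambda(\zeta)$ with $\la$ dominant on $\h\cap[\fk,\fk]$) and $P(\la)\mapsto 0$ otherwise; the cokernel $M'$ of the resulting map $Q'\to P'$ lies in $\mc O^{\mf q,\zeta\text{-pres}}$ by construction. One then checks $\pi(M')\cong X$ by comparing images: the functor $P(\la)\mapsto P^{\mf q}(\la)$ becomes, after applying $\pi$, naturally isomorphic to the identity on the relevant projectives, because killing $L(\la)$ with $\la$ not dominant on $\h\cap[\fk,\fk]$ is already built into passing from $\mc O_\Z$ to $\mc O_\Z/\mc I_\zeta$ versus $\mc O_\Z^{\mf q}/\mc I_\zeta^{\mf q}$.

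The main obstacle I expect is step (iii), and specifically the bookkeeping in the last sentence: one must verify that the parabolic projectivization functor $P(\la)\mapsto P^{\mf q}(\la)$ intertwines $'\pi$ and $\pi$ up to natural isomorphism on $\zeta$-admissible projectives, i.e.\ that $\pi(P^{\mf q}(\la))\cong\,'\pi(P(\la))$ compatibly with maps. This rests on two facts that must be assembled carefully: first, $P^{\mf q}(\la)$ is the largest quotient of $P(\la)$ in $\mc O^{\mf q}_\Z$, so the kernel of $P(\la)\tto P^{\mf q}(\la)$ has a filtration by Verma modules $M(\mu)$ with $\mu$ not dominant on $\h\cap[\fk,\fk]$, hence each such $M(\mu)$ has all composition factors $L(\nu)$ with $\nu\le\mu$ not dominant on $\h\cap[\fk,\fk]$ — but one needs these $L(\nu)$ to actually lie in $\mc I_\zeta$ (equivalently $\nu\notin\Lambda(\zeta)$), which requires the commutativity hypothesis $[\fl,\fl]\leftrightarrow[\fk,\fk]$ to ensure that ``not dominant on $\h\cap[\fk,\fk]$'' interacts correctly with ``$W_{\fl}$-antidominant.'' Here is exactly where the structural assumption on $\fk$ and $\fl$ is consumed, and this compatibility of the two dominance conditions (so that the kernel of parabolic projection is killed by $'\pi$) is the genuinely non-formal point; everything else is the routine adaptation promised in the paper. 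I would isolate this as a lemma before running the argument above.
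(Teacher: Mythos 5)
Your overall strategy---deducing the parabolic statement formally from \cite[Lemma 12]{CCM23} via the compatibility $'\pi|_{\mc O_\Z^{\mf q}}=\pi$---is not what the paper does (the paper simply re-runs the trace/projectivization argument of loc.\ cit.\ inside $\mc O_\Z^{\mf q}$), and as executed your step (iii) contains a genuine error. The natural isomorphism you rely on, namely $'\pi(P(\la))\cong\pi(P^{\mf q}(\la))$ for $\la\in\Lambda(\zeta)^{\mf q}$, i.e.\ that $\ker\bigl(P(\la)\twoheadrightarrow P^{\mf q}(\la)\bigr)$ lies in $\mc I_\zeta$, is false. Membership in $\mc I_\zeta$ is controlled by $W_{\fl}$-antidominance, a condition on $\nu|_{\h\cap[\fl,\fl]}$, whereas the kernel of the parabolic projection is controlled by failure of dominance on $\h\cap[\fk,\fk]$; precisely because $[\fl,\fl]$ and $[\fk,\fk]$ commute these two conditions are independent, so ``$\nu$ not $\fk$-dominant'' does \emph{not} imply $\nu\notin\Lambda(\zeta)$ --- the opposite of what you hope the commutativity hypothesis delivers. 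Concretely, for $\g=\sln(2)\times\sln(2)$ with $\fl$ the first factor, $\fk$ the second, and $\la=(\la_1,\la_2)$ with $\la_1$ antidominant regular and $\la_2$ dominant regular, one has $P(\la)=P(\la_1)\boxtimes M(\la_2)$ and $P^{\mf q}(\la)=P(\la_1)\boxtimes L(\la_2)$, and the kernel $P(\la_1)\boxtimes L(s\cdot\la_2)$ has the composition factor $L(\la_1,s\cdot\la_2)$ with $(\la_1,s\cdot\la_2)\in\Lambda(\zeta)$, hence does not lie in $\mc I_\zeta$. (Even more simply, for $\zeta=0$ one has $\mc I_\zeta=0$ while the kernel is nonzero.) Your auxiliary claim that all composition factors of $M(\mu)$ with $\mu$ not $\fk$-dominant are again not $\fk$-dominant is also false, already for $\gl(3)$.

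The error propagates: what your argument actually requires is that for the specific lift $M\in\mc O^{\zeta\text{-pres}}$ of $X$ the kernel of $M\twoheadrightarrow\tau M$ (with $\tau$ the largest-quotient-in-$\mc O^{\mf q}_\Z$ functor) lies in $\mc I_\zeta$, and nothing you write establishes this. The gap is best avoided by never leaving $\mc O_\Z^{\mf q}$: for $X\in\mc O_\Z^{\mf q}$ the trace $X^{\texttt{tr}}$ of the $\zeta$-admissible projectives of $\mc O_\Z^{\mf q}$ satisfies $X/X^{\texttt{tr}}\in\mc I^{\mf q}_\zeta$, since projectivity of $P^{\mf q}(\la)$ forces $\Hom(P^{\mf q}(\la),X/X^{\texttt{tr}})=0$ for all $\la\in\Lambda(\zeta)^{\mf q}$; then replacing the kernel $K$ of a $\zeta$-admissible cover $P_0\twoheadrightarrow X^{\texttt{tr}}$ by $K^{\texttt{tr}}$ yields an object of $\mc O^{\mf q,\zeta\text{-pres}}$ with the same image under $\pi$, and full faithfulness follows from the same vanishing $\Hom(P^{\mf q}(\la),I)=0$ for $I\in\mc I^{\mf q}_\zeta$. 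This is the adaptation of \cite[Lemma 12]{CCM23} the paper intends. Finally, your step (i) is asserted as ``immediate'' but is not: to place $P^{\mf q}(\la)$ in $\mc O^{\zeta\text{-pres}}$ you must show that $\ker\bigl(P(\la)\twoheadrightarrow P^{\mf q}(\la)\bigr)$ has no nonzero quotient lying in $\mc I_\zeta$, which your proposal does not address.
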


\subsection{Properly stratified structure on $\mc O^{\mf q,\zeta\text{-pres}}$}\label{sec:Ozeta:para}

When $\g$ is a Lie algebra, it is known that the category $\mc O^{\zeta\text{-pres}}$ is a properly stratified category (cf. \cite{MaS05}). In the case when $\g$ is a basic classical Lie superalgebra the category $\mc O^{\zeta\text{-pres}}$ is properly stratified as well according to \cite[Section 5]{CCM23} and \cite[Section 4]{CC24}.

The arguments in \cite[Section 2]{MaS05} can be adapted to prove that the parabolic subcategory $\mc O^{\mf q,\zeta\text{-pres}}$ of $\mc O^{\zeta\text{-pres}}$ is properly stratified in the case when $\g$ is a Lie algebra. Using this, we can then apply the arguments in \cite{CCM23,CC24} to prove that the parabolic subcategory $\mc O^{\mf q,\zeta\text{-pres}}$ is properly stratified in the case when $\g$ is a basic classical Lie superalgebra as well. Below we shall give more precise statements.

For $\la \in \Lambda^{\mf q}(\zeta)$ let $P(\g,\la)\in \mc O_\Z^{\mf q}$ denote the projective cover of $L(\g,\la)$. We shall write $P(\la)$ for $P(\g,\la)$ when $\g$ is clear from the context. We define the following module in $\mc O^{\mf q,{\zeta}\text{-pres}}$:
\begin{align*}
    S(\la):=P(\la)/(\text{rad}P(\la)^{\texttt{tr}}),
\end{align*}
where we have denoted by $M^{\texttt{tr}}$ the sum of all homomorphic images of $\zeta$-admissible projective modules in $\mc O_\Z^{\mf q}$ to a module $M$ in $\mc O_\Z^{\mf q}$.
 Furthermore, we define the following standard and proper standard modules, respectively:
\begin{align*}
    \Delta(\la) &=\text{Ind}^\g_{\mf P}P(\mf L,\la),\\
    \blacktriangle(\la)&=P(\la)/(Q(\la)^{\texttt{tr}}),
\end{align*}
where $P(\mf L,\la)\cong P([\fl,\fl]\oplus\mf z,\la|_{\h\cap[\fl,\fl]+\mf z})\otimes L([\fk,\fk],\la|_{\h\cap[\fk,\fk]})$ is the projective cover of the irreducible $\mf L$-module $L(\mf L,\la)$ in the corresponding parabolic category of $\mf L$-modules, and $Q(\la)$ is the kernel of the canonical map $P(\la)\rightarrow N(\la)\rightarrow 0$. Both $\Delta(\la)$ and $\blacktriangle(\la)$ lie in $\mc O^{\mf q,\zeta\text{-pres}}$. The arguments in \cite[Section 2]{MaS05} and \cite[Section 4]{CC24} can be adapted now to show that $\mc O^{\mf q,\zeta\text{-pres}}$ is properly stratified with indecomposable projective objects $P(\la)$, standard and proper standard objects $\Delta(\la)$ and $\blacktriangle(\la)$, respectively, and simple objects $S(\la)$, where $\la\in\Lambda(\zeta)^{\mf q}$. In particular, $P(\la)$ has a filtration, subquotients of which are $\Delta(\mu)$ with $\mu\succeq\la$. Also, $\Delta(\la)$ has a filtration of length $|W_{\fl}\cdot\la|$ with each subquotient isomorphic to $\blacktriangle(\la)$. We summarize the above discussion in the following.

\begin{prop}\label{prop:para:strat:g}
    The category $\mc O^{\mf q,\zeta\text{-pres}}$ is a properly stratified category with indecomposable projective, standard, proper standard, and simple objects $P(\la)$, $\Delta(\la)$, $\blacktriangle(\la)$ and $S(\la)$, for $\la\in\Lambda(\zeta)^{\mf q}$, respectively. Furthermore, the following BGG-type reciprocity holds in $\mc O^{\mf q,\zeta\text{-pres}}$:
    \begin{align*}
        (P(\la):\Delta(\mu)) = [\blacktriangle(\mu):S(\la)], \qquad \text{ for } \la,\mu\in\Lambda(\zeta)^{\mf q}.
    \end{align*}
\end{prop}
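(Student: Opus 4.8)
The plan is to establish the proposition in two stages, following the route indicated above. First, for $\g$ a reductive Lie algebra, I would adapt the homological arguments of \cite[Section 2]{MaS05} from the principal cokernel category $\mc O^{\zeta\text{-pres}}$ to its parabolic variant $\mc O^{\mf q,\zeta\text{-pres}}$; then I would transport the conclusion to a basic classical Lie superalgebra $\g$ by the reductions of \cite[Section 5]{CCM23} and \cite[Section 4]{CC24}. Throughout I would argue inside the equivalence $\pi^{\mf q}:\mc O^{\mf q,\zeta\text{-pres}}\stackrel{\cong}{\longrightarrow}\mc O^{\mf q}_\Z/\mc I^{\mf q}_\zeta$ of Proposition~\ref{Opres=quot:gen}: objects and $\Ext$-groups in $\mc O^{\mf q,\zeta\text{-pres}}$ are computed in the Serre quotient, $\zeta$-admissible projectives of $\mc O^{\mf q}_\Z$ are sent to the indecomposable projectives $P(\la)=P(\g,\la)$ for $\la\in\Lambda(\zeta)^{\mf q}$, and $\pi^{\mf q}$ is intertwined with the Backelin functor $\Gamma_\zeta$ through the identification $\mc W(\zeta)^{\mf q}\cong\mc O^{\mf q}_\Z/\mc I^{\mf q}_\zeta$ of diagram~\eqref{diag:q}, so that Proposition~\ref{prop:gammaq} applies ($\Gamma_\zeta N(\la)=N(\la,\zeta)$ and $\Gamma_\zeta L(\g,\la)=L(\la,\zeta)$ for $\la$ $W_{\fl}$-antidominant).

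Next I would place the listed objects in $\mc O^{\mf q,\zeta\text{-pres}}$ and pin down their heads: $S(\la)=\pi^{\mf q}(L(\g,\la))$ exhausts the simples; $\Delta(\la)=\Ind^\g_{\mf P}P(\mf L,\la)$ has simple head $L(\g,\la)$ and lies in the cokernel category because the natural surjection $P(\la)\twoheadrightarrow\Delta(\la)$ has a $\zeta$-admissibly generated kernel, using that $P(\mf L,\la)$ carries a Verma flag over $\mf L$ and $\Ind^\g_{\mf P}$ sends those Vermas to parabolic Verma modules; and $\blacktriangle(\la)=P(\la)/Q(\la)^{\texttt{tr}}$ is by construction the object representing $\pi^{\mf q}(N(\la))$, which by Proposition~\ref{prop:gammaq} corresponds to $N(\la,\zeta)$ and has simple head $L(\la,\zeta)$. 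The $\blacktriangle$-flag of $\Delta(\la)$ is then obtained concretely: since $\la$ is $W_{\fl}$-antidominant, the $[\fl,\fl]$-factor of $L(\mf L,\la)$ is the simple Verma $M([\fl,\fl],\la|_{\h\cap[\fl,\fl]})$, whose projective cover $P([\fl,\fl],\la|_{\h\cap[\fl,\fl]})$ has a Verma flag with exactly $|W_{\fl}\cdot\la|$ subquotients; tensoring with $L([\fk,\fk],\cdot)\otimes\C_{\la^\perp}$ and applying $\Ind^\g_{\mf P}$ turns this into a parabolic Verma flag of $\Delta(\la)$ with subquotients $N(w\cdot\la)$, $w\in W_{\fl}$ (each defined, since $W_{\fl}$ fixes $\la|_{\h\cap[\fk,\fk]}$). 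Applying the exact functor $\pi^{\mf q}$ and using $\pi^{\mf q}(N(w\cdot\la))\cong\blacktriangle(\la)$ — valid because $N(w\cdot\la,\zeta)\cong N(\la,\zeta)$, as Kostant's module $K([\fl,\fl];\cdot,\zeta)$ depends only on the $W_{\fl}$-orbit of its highest weight while the $[\fk,\fk]$- and $\mf z$-data are $W_{\fl}$-fixed — yields a $\blacktriangle(\la)$-flag of $\Delta(\la)$ of length $|W_{\fl}\cdot\la|$.

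For the $\Delta$-flag of $P(\la)$ I would invoke the $\Ext^1$-criterion from \cite[Section 2]{MaS05}: once the standard/proper-costandard $\Ext$-orthogonality is set up from the $\blacktriangle$-flag of $\Delta$ and the Verma combinatorics of the ambient $\mc O^{\mf q}_\Z$, projectivity of $P(\la)$ in the Serre quotient forces it to be $\Delta$-filtered, the subquotients $\Delta(\mu)$ being those with $\mu\succeq\la$ by the corresponding property of the parabolic Verma flag of $P(\g,\la)$. The BGG-type reciprocity then follows formally. Start from the parabolic BGG reciprocity $(P(\g,\la):N(\mu))=[N(\mu):L(\g,\la)]$ in $\mc O^{\mf q}_\Z$ (via the simple-preserving duality on $\mc O^{\mf q}$ and self-duality of composition multiplicities) and sum over a $W_{\fl}$-orbit with $W_{\fl}$-antidominant representative $\bar\mu\in\Lambda(\zeta)^{\mf q}$. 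The left side gives $\sum_{w\in W_{\fl}}(P(\g,\la):N(w\cdot\bar\mu))=|W_{\fl}\cdot\bar\mu|\cdot(P(\la):\Delta(\bar\mu))$, using $\pi^{\mf q}(N(w\cdot\bar\mu))\cong\blacktriangle(\bar\mu)$, the $\blacktriangle$-flag of $\Delta(\bar\mu)$, and the $\Delta$-filtration of $P(\la)$. On the right, $[N(w\cdot\bar\mu):L(\g,\la)]$ is independent of $w$: applying the exact $\Gamma_\zeta$ and Proposition~\ref{prop:gammaq} gives $[N(w\cdot\bar\mu):L(\g,\la)]=[N(\bar\mu,\zeta):L(\la,\zeta)]=[\blacktriangle(\bar\mu):S(\la)]$, so the right side equals $|W_{\fl}\cdot\bar\mu|\cdot[\blacktriangle(\bar\mu):S(\la)]$. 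Cancelling $|W_{\fl}\cdot\bar\mu|$ yields $(P(\la):\Delta(\bar\mu))=[\blacktriangle(\bar\mu):S(\la)]$.

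I expect the main obstacle to be the passage to the super case in the presence of the extra parabolic $\mf q$: one must verify that every structural input above survives — existence, projectivity and parabolic Verma flags of $P(\g,\la)$ in $\mc O^{\mf q}_\Z$; the compatibility $\mc I^{\mf q}_\zeta=\mc I_\zeta\cap\mc O^{\mf q}_\Z$ together with the explicit form of $\zeta$-admissible projectives; the simple-preserving duality on $\mc O^{\mf q}$; and the commutation of $\Gamma_\zeta$ with $\Ind^\g_{\mf P}$ and with $\Res^\g_{\mf L}$. The real content is to confirm that the treatments of the non-parabolic super cokernel category in \cite{CCM23,CC24} use nothing beyond the assumption that $[\fl,\fl]$ and $[\fk,\fk]$ commute, so that they apply verbatim to the $\fk$-semisimple full subcategory, and that the regrouping of a projective's Verma flag into a $\Delta$-flag is compatible with the parabolic truncation. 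A minor but convenient point is that the length $|W_{\fl}\cdot\la|$ of the $\blacktriangle$-flag is insensitive to the super structure, as $\fl$ — hence $W_{\fl}$ — is even by our standing assumption.
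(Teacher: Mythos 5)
Your proposal is correct and follows the same two-stage strategy that the paper itself adopts and largely delegates to the literature: first adapt the arguments of Mazorchuk--Stroppel (MaS05, Section 2) to the parabolic cokernel category $\mc O^{\mf q,\zeta\text{-pres}}$ for reductive $\g$, then transfer the conclusion to basic classical Lie superalgebras via CCM23 (Section 5) and CC24 (Section 4). Since the paper records essentially no details beyond this citation, your explicit construction of the $\blacktriangle(\la)$-flag of $\Delta(\la)$ of length $|W_{\fl}\cdot\la|$ from the Verma flag of the antidominant projective over $[\fl,\fl]$, and your derivation of the reciprocity by summing parabolic BGG reciprocity over a $W_{\fl}$-orbit and applying the exact Backelin functor, are a faithful (and valid) filling-in of exactly the steps the paper leaves to the reader.
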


\subsection{Tilting modules in $\mc O^{\mf q,\zeta\text{-pres}}$}

It is well known that there exists a duality functor ${\cdot}^\vee$ on $\mc O_\Z$ which restricts to a simple-preserving duality functor on $\mc O_\Z^{\mf q}$. We note that for $\la\in\Lambda(\zeta)^{\mf q}$, the projective module $P(\la)$, as an $\mf L$-module, is a direct sum of self-dual projective modules. Now, we can show, following the arguments in \cite[Propositions 2.8 and 2.9]{MaS05}, that if $M\in\mc O_\Z^{\mf q}$, such that, as an $\mf L$-module, $M$ is a direct sum of self-dual projective modules, then $M\in\mc O^{\mf q,\zeta\text{-pres}}$. This in particular implies that
\begin{align*}
    \nabla(\la):=\Delta(\la)^\vee \in\mc O^{\mf q,\zeta\text{-pres}}.
\end{align*}

Let $\mc F(\Delta)$ be the full subcategory of $\mc O^{\mf q,\zeta\text{-pres}}$ of modules with finite $\Delta$-flags. We define similarly $\mc F(\nabla)$ to be the full subcategory of $\mc O^{\mf q,\zeta\text{-pres}}$ of modules that have finite $\nabla$-flags. A module $T\in\mc O^{\mf q,\zeta\text{-pres}}$ is called a tilting module if $T\in\mc F(\Delta)\cap\mc F(\nabla)$. Any tilting module is a direct sum of indecomposable tilting modules, and the indecomposable tilting modules in $\mc O^{\mf q,\zeta\text{-pres}}$ are parametrized by $\Lambda(\zeta)^{\mf q}$. We denote the indecomposable tilting module corresponding to $\la\in\Lambda(\zeta)^{\mf q}$ by $T(\la)$ with $\la$ as its highest weight. Denote the indecomposable tilting module in $\mc O_\Z^{\mf q}$ of highest weight $\la\in \Lambda$, dominant on $\h\cap[\fk,\fk]$, by $T^{\mc O_\Z^{\mf q}}(\la)$. Denote by $w_0^{\mf k}$ (respectively, $w_0^{\fl}$) the longest element in the Weyl group of $\mf k$ (respectively, $\mf l$).

\begin{prop}\label{prop:para:stratified}
    Let $\la\in\Lambda(\zeta)^{\mf q}$. Then
    \begin{align*}
        T(\la)= T^{\mc O_\Z^{\mf q}}(w_0^{\fl}\cdot\la).
    \end{align*}
Furthermore, the following Ringel-type duality holds:
\begin{align*}
        (T(\la):\Delta(\mu)) = [\blacktriangle(-w_0^{\fl} w_0^{\mf k}\cdot\mu-\rho):S(-w_0^{\fl} w_0^{\mf k}\cdot\la-\rho)].
    \end{align*}
\end{prop}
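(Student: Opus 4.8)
The plan is to deduce both assertions from the properly stratified structure of $\mc O^{\mf q,\zeta\text{-pres}}$ established in \S\ref{sec:Ozeta:para} together with tilting theory in the ambient parabolic category $\mc O_\Z^{\mf q}$, adapting the arguments of \cite[\S2]{MaS05} (and \cite[\S4]{CC24} in the super case) to the parabolic situation. For the identity $T(\la)=T^{\mc O_\Z^{\mf q}}(w_0^{\fl}\cdot\la)$ I would first record that, regarded inside $\mc O_\Z^{\mf q}$, the standard object $\Delta(\la)=\Ind^\g_{\mf P}P(\mf L,\la)$ has a filtration by parabolic Verma modules $N(\mu)$: the module $P(\mf L,\la)\cong P([\fl,\fl]\oplus\fz,\cdot)\otimes L([\fk,\fk],\cdot)$ carries a Verma flag over $\mf L$ whose subquotients are $\mf L$-parabolic Verma modules, $\Ind^\g_{\mf P}$ is exact, and it sends each such to some $N(\mu)$; dually, $\nabla(\la)=\Delta(\la)^\vee$ is filtered by the $N(\mu)^\vee$. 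Hence a tilting object of $\mc O^{\mf q,\zeta\text{-pres}}$, having both a $\Delta$-flag and a $\nabla$-flag, has an $N$-flag and an $N^\vee$-flag in $\mc O_\Z^{\mf q}$, so is a tilting object there; thus the indecomposable tiltings of $\mc O^{\mf q,\zeta\text{-pres}}$ lie among those of $\mc O_\Z^{\mf q}$. To read off the label, note that since $\la$ is $W_{\fl}$-antidominant, $P([\fl,\fl]\oplus\fz,\la|_{\h\cap[\fl,\fl]+\fz})$ is the projective cover of the antidominant simple $L([\fl,\fl],\la|_{\h\cap[\fl,\fl]})$, hence coincides with the projective--injective (``big'') module of its block and has highest weight the $W_{\fl}$-dominant weight $w_0^{\fl}\cdot(\la|_{\h\cap[\fl,\fl]})$; as $[\fl,\fl]$ commutes with $[\fk,\fk]$ and $\fz$ is central, $w_0^{\fl}$ fixes the remaining components, so $\Delta(\la)$ has highest weight $w_0^{\fl}\cdot\la$. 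Since the $\Delta$-flag of $T(\la)$ uses only $\Delta(\mu)$ with $\mu\succeq\la$ in the stratification order, and that order (by its explicit description in \S\ref{sec:Ozeta:para}) translates into $w_0^{\fl}\cdot\mu\le w_0^{\fl}\cdot\la$ in dominance order, the highest weight of $T(\la)$ is exactly $w_0^{\fl}\cdot\la$, whence $T(\la)\cong T^{\mc O_\Z^{\mf q}}(w_0^{\fl}\cdot\la)$; injectivity of $\la\mapsto w_0^{\fl}\cdot\la$ on $\Lambda(\zeta)^{\mf q}$ shows these exhaust the indecomposable tiltings.

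For the Ringel-type duality I would apply Ringel duality for properly stratified categories. With $T=\bigoplus_{\la\in\Lambda(\zeta)^{\mf q}}T(\la)$ and $R=\End(T)$ (endomorphisms taken in $\mc O^{\mf q,\zeta\text{-pres}}$, equivalently in $\mc O_\Z^{\mf q}$), the category $R\mod$ is again properly stratified, and the Ringel duality functor $\Hom(T,-)$ on $\mc F(\nabla)$ converts the $\nabla$-multiplicities in tilting modules of $\mc O^{\mf q,\zeta\text{-pres}}$ into $\Delta$-multiplicities in indecomposable projectives of $R\mod$. Combining this with the equality of standard and costandard multiplicities in a tilting module, with BGG reciprocity in $R\mod$, and with the simple-preserving duality $\vee$ (which exchanges $\Delta\leftrightarrow\nabla$ and the proper standards with their duals while fixing the $S(\la)$), one expresses $(T(\la):\Delta(\mu))$ as a proper-standard composition multiplicity $[\blacktriangle^R(\mu):S^R(\la)]$ in $R\mod$. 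It then remains to identify $R\mod$, as a properly stratified category, with $\mc O^{\mf q,\zeta\text{-pres}}$ itself under the relabeling $\nu\mapsto-w_0^{\fl}w_0^{\fk}\cdot\nu-\rho$; via the first identity this reduces to tracking the Ringel duality of $\mc O_\Z^{\mf q}$ (cf.\ \cite{MaS05} in the non-super case), under which the indecomposable tilting $T^{\mc O_\Z^{\mf q}}(w_0^{\fl}\cdot\la)$ corresponds to the indecomposable projective $P^{\mc O_\Z^{\mf q}}(-w_0^{\fl}w_0^{\fk}\cdot\la-\rho)$, so that the projective generator of $R\mod$ becomes, after the relabeling, that of $\mc O^{\mf q,\zeta\text{-pres}}$. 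Feeding this into the previous chain of equalities yields the asserted formula.

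The first identity is essentially bookkeeping; the crux, and the step I expect to be the main obstacle, is the second. One must check that the Ringel duality formalism for properly stratified categories carries over verbatim to the basic classical Lie superalgebra setting (precisely where the arguments of \cite[\S2]{MaS05} and \cite[\S4]{CC24} have to be adapted), and, more delicately, one must track the combinatorial relabeling $\nu\mapsto-w_0^{\fl}w_0^{\fk}\cdot\nu-\rho$ through the whole chain of functors: verifying that it is exactly the involution produced by Ringel self-duality of $\mc O_\Z^{\mf q}$ composed with the identification $T(\la)=T^{\mc O_\Z^{\mf q}}(w_0^{\fl}\cdot\la)$, and accounting for weights with nontrivial $W_{\fl}$- or $W_{\fk}$-stabilizers.
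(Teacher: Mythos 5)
Your argument for the first identity runs in the opposite direction from the paper's and, taken on its own, is essentially workable: you show that a tilting object of $\mc O^{\mf q,\zeta\text{-pres}}$ is an ambient tilting object of $\mc O_\Z^{\mf q}$ (via the $N$-flag of $\Delta(\la)=\Ind^\g_{\mf P}P(\mf L,\la)$ and its dual) and then match highest weights. But note that this presupposes the existence of an indecomposable tilting $T(\la)$ in $\mc O^{\mf q,\zeta\text{-pres}}$ for \emph{every} $\la\in\Lambda(\zeta)^{\mf q}$, which is exactly what the paper's proof supplies constructively: one first shows $\Delta(w_0^{\mf k}\cdot\nu)=P(w_0^{\mf k}\cdot\nu)$ is self-dual (irreducibility of $N(w_0^{\mf k}\cdot\nu)$ for $\nu$ antidominant and $[\fk,\fk]$-regular, plus Irving's socular criterion), hence tilting, and then obtains all other $T^{\mc O_\Z^{\mf q}}(w_0^{\fl}\cdot\la)$ as summands of tensor products with finite-dimensional modules, checking they retain $\Delta$- and $\nabla$-flags. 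Without some such construction, "injectivity of $\la\mapsto w_0^{\fl}\cdot\la$" only shows your map is injective on whatever tiltings happen to exist, not that the list is complete.

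The genuine gap is in the second identity. Your plan passes through abstract Ringel duality for the properly stratified algebra $R=\End(T)$ and then requires identifying $R\operatorname{-mod}$ with $\mc O^{\mf q,\zeta\text{-pres}}$ itself under the relabeling $\nu\mapsto-w_0^{\fl}w_0^{\mf k}\cdot\nu-\rho$. That identification (Ringel self-duality of $\mc O^{\mf q,\zeta\text{-pres}}$ with this specific bijection on labels) is essentially equivalent in strength to the formula you are trying to prove, and you explicitly leave it as "the main obstacle" rather than establishing it; moreover the formalism of Ringel duality for properly stratified categories pairs $\mc F(\Delta)$ with $\mc F(\ov\nb)$ (proper costandards), not with $\mc F(\nabla)$, so even the bookkeeping you sketch needs care. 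The paper sidesteps all of this: having proved $T(\la)=T^{\mc O_\Z^{\mf q}}(w_0^{\fl}\cdot\la)$, it invokes the \emph{known} Ringel duality of the quasi-hereditary category $\mc O_\Z^{\mf q}$ (\cite[Corollary 3.8]{CCC21}) and converts $N$-multiplicities into $\Delta$-multiplicities using the fact that $(T^{\mc O_\Z^{\mf q}}(w_0^{\fl}\cdot\la):N(\mu))=(T^{\mc O_\Z^{\mf q}}(w_0^{\fl}\cdot\la):N(w\cdot\mu))$ for $w\in W_{\fl}$, each $\Delta(\mu)$ being filtered by the $N(w\cdot\mu)$ and each $\blacktriangle$ corresponding to a single $N$; this is a short, direct computation rather than an appeal to abstract Ringel self-duality. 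Finally, for $\g$ a genuine superalgebra the paper does not rerun the Lie-algebra argument but reduces to the parabolic cokernel category of $\g_{\bar 0}$ following \cite[\S4.4]{CC24}; your proposal only flags this step as something to be checked.
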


\begin{proof}
We shall first establish the proposition for $\g$ a Lie algebra. To do that we will prove that tilting modules in $\mc O_\Z^{\mf q}$ of the form $T^{\mc O_\Z^{\mf q}}(w_0^{\fl}\cdot\la)$ indeed have $\Delta$-flags by adapting the arguments in \cite[Lemma 18]{FKM00}.

Let $\nu\in\Lambda$ be an anti-dominant weight that is regular on the $[\fk,\fk]$.  Let $w_0^{\mf k}$ be the longest element in Weyl group of the subalgebra $[\fk,\fk]$. We observe that the parabolic Verma module $N(w_0^{\mf k}\cdot\nu)$ is irreducible (see, e.g., \cite[Lemma 2]{Ja77}). Thus, in particular, $L(w_0^{\mf k}\cdot\nu)$ is the socle of a parabolic Verma module, and hence, by a classical theorem of Irving \cite{Ir85}, its projective cover $P(w_0^{\mf k}\cdot\nu)$ is self-dual and hence a tilting module. On the other hand, we have that $P(w_0^{\mf k}\cdot\nu)=\Delta(w_0^{\mf k}\cdot\nu)$ by the same argument as \cite[Proposition 2.9(i)]{MaS05}. Thus, we conclude that $\Delta(w_0^{\mf k}\cdot\nu)\cong T^{O_\Z^{\mf q}}(w_0^{\fl} w_0^{\mf k}\cdot\nu)$ and $T^{O_\Z^{\mf q}}(w_0^{\fl} w_0^{\mf k}\cdot\nu)$ has a $\Delta$- and a $\nabla$-flag.

One sees that every other tilting module $T^{O_\Z^{\mf q}}(\la)$, with $\la\in\Lambda(\zeta)^{\mf q}$, can be obtained as a direct summand of the tensor product of such a tilting module as above with a finite-dimensional module of $\g$. Since they have $\Delta$- and $\nabla$-flags, they are indeed tilting module in $\mc O^{\mf q,\zeta\text{-pres}}$. From this it is clear that $T^{\mc O_\Z^{\mf q}}(w_0^{\fl}\cdot\la)$ contains $\Delta(\la)$ as a submodule, and hence, by a standard uniqueness argument,  is isomorphic to $T(\la)$.

The Ringel-type duality in the second statement is now a consequence of the Ringel duality for the parabolic category $\mc O_\Z^{\mf q}$ (see, e.g, \cite[Corollary 3.8]{CCC21}) and the fact that $(T^{\mc O_\Z^{\mf q}}(w_0^{\fl}\cdot\la):N(\mu))=(T^{\mc O_\Z^{\mf q}}(w_0^{\fl}\cdot\la):N(w\cdot\mu))$, for any $w\in W_{\fl}$. The reader is referred to \cite[Corollary 18]{CC24} for more details.

Now suppose that $\g$ is a basic classical Lie superalgebra that is not a Lie algebra. The existence of tilting modules in ${\mc O}^{\mf q,\zeta\text{-pres}}$ can now be derived using the existence of tilting modules in the parabolic cokernel category for the Lie algebra $\g_{\bar 0}$ following the arguments in \cite[\S4.4]{CC24}.
\end{proof}

\subsection{Properly stratified Whittaker categories}\label{sec:strat:Whitt}

Let $\g$ be a basic classical Lie superalgebra with $\zeta:\mf n\rightarrow\C$ as in \eqref{eq:zeta}. Let $\mc W(\zeta) \subset \MS(\zeta)$ be the image category of the Backelin functor $\Gamma_\zeta:\mc O_\Z\rightarrow \MS(\zeta)$, where we keep the same notation to denote the restriction of the Backelin functor to $\mc O_\Z$. It was shown in \cite[Corollary 38]{CCM23} and \cite[Corollary 28]{CC24} that the functor $\Gamma_\zeta:\mc O_\Z\rightarrow\mc W(\zeta)$ satisfies the universal property of the Serre quotient functor corresponding to the Serre subcategory $\mc I_\zeta$, and hence it induces an equivalence of categories
\begin{align}
   {}'{\Gamma}_\zeta:\mc O_\Z/\mc I_\zeta\stackrel{\cong}{\longrightarrow} \mc W(\zeta).
\end{align}
Now, $\mc W(\zeta)$ contains the standard Whittaker modules and also all simple modules of integral central characters in $\MS(\zeta)$. Furthermore, since $\mc O^{\zeta\text{-pres}}$ is properly stratified by Proposition~\ref{prop:para:strat:g}, it follows that the category $\mc W(\zeta)$ is also properly stratified. In this section we shall give a parabolic analogue of this result.

Let $\mf q$ is another parabolic subalgebra of $\g$ with even Levi subalgebra $\mf k$ satisfying compatibility condition with $\mf p$ as in previous sections. We consider the restriction of the Backelin functor $\Gamma_\zeta:\mc O_\Z\rightarrow \mc W(\zeta)$ to the parabolic subcategory $\mc O_\Z^{\mf q}$. Denote the corresponding image subcategory of $\mc W(\zeta)$ by $\mc W(\zeta)^{\mf q}$ so that we have a functor $\Gamma^{\mf q}_\zeta:\mc O_\Z^{\mf q}\rightarrow\mc W(\zeta)^{\mf q}$.

\begin{prop}\label{prop:gamma:para}
    The functor ${\Gamma}^{\mf q}_\zeta:\mc O_\Z^{\mf q}\rightarrow \mc W(\zeta)^{\mf q}$ induces an equivalence of categories
    \begin{align*}
        {}'{\Gamma}^{\mf q}_\zeta:\mc O_\Z^{\mf q}/\mc I^{\mf q}_\zeta\stackrel{\cong}{\longrightarrow} \mc W(\zeta)^{\mf q},
    \end{align*}
    and the category $\mc W(\zeta)^{\mf q}$ inherits a properly stratified structure.
    Furthermore, $\mc W(\zeta)^{\mf q}$ contains $N(\la,\zeta)$ and simple Whittaker module $L(\la,\zeta)$, for $\la\in\Lambda(\zeta)^{\mf q}$.
\end{prop}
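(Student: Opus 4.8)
The plan is to derive all three assertions from the non-parabolic case of \cite{CCM23,CC24} --- that $\Gamma_\zeta$ induces an equivalence ${}'\Gamma_\zeta:\mc O_\Z/\mc I_\zeta\xrightarrow{\cong}\mc W(\zeta)$ --- together with Propositions~\ref{Opres=quot:gen} and \ref{prop:para:strat:g}. Throughout, $\Gamma^{\mf q}_\zeta$ is simply the restriction of $\Gamma_\zeta$ to $\mc O^{\mf q}_\Z$, and $\mc W(\zeta)^{\mf q}$ is the full subcategory of $\mc W(\zeta)$ on the objects $\Gamma_\zeta(M)$, $M\in\mc O^{\mf q}_\Z$.

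First I would note that $\Gamma^{\mf q}_\zeta$ is exact and kills $\mc I^{\mf q}_\zeta$: each generating simple $L(\la)$ of $\mc I^{\mf q}_\zeta$ has $\la$ dominant on $\h\cap[\fk,\fk]$ but $\la\notin\Lambda(\zeta)$, hence $\la$ is not $W_\fl$-antidominant and $\Gamma_\zeta(L(\la))=0$ by Proposition~\ref{prop:gammaq}. So $\Gamma^{\mf q}_\zeta$ factors, up to isomorphism, as ${}'\Gamma^{\mf q}_\zeta\circ\pi$ through the Serre quotient functor $\pi:\mc O^{\mf q}_\Z\to\mc O^{\mf q}_\Z/\mc I^{\mf q}_\zeta$, with ${}'\Gamma^{\mf q}_\zeta$ exact. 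To see ${}'\Gamma^{\mf q}_\zeta$ is an equivalence onto $\mc W(\zeta)^{\mf q}$, I would use that $\mc O^{\mf q}_\Z$ is a full abelian subcategory of $\mc O_\Z$, closed under subobjects and quotients, with $\mc I^{\mf q}_\zeta=\mc I_\zeta\cap\mc O^{\mf q}_\Z$: the colimit formula computing $\Hom$ in a Serre quotient then runs over the same indexing data and the same $\Hom$-groups (by fullness, the intermediate sub- and quotient objects automatically staying in $\mc O^{\mf q}_\Z$) whether one quotients $\mc O^{\mf q}_\Z$ or $\mc O_\Z$, so the comparison functor $\iota:\mc O^{\mf q}_\Z/\mc I^{\mf q}_\zeta\to\mc O_\Z/\mc I_\zeta$ is fully faithful --- this is the precise content of the identity ${}'\pi|_{\mc O^{\mf q}_\Z}=\pi$ recorded before Proposition~\ref{Opres=quot:gen}. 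Since $\Gamma_\zeta={}'\Gamma_\zeta\circ{}'\pi$, restriction to $\mc O^{\mf q}_\Z$ identifies ${}'\Gamma^{\mf q}_\zeta$ with ${}'\Gamma_\zeta\circ\iota$, a composite of a fully faithful functor and an equivalence; hence ${}'\Gamma^{\mf q}_\zeta$ is fully faithful with essential image $\{\Gamma_\zeta(M)\mid M\in\mc O^{\mf q}_\Z\}=\mc W(\zeta)^{\mf q}$, i.e.\ an equivalence. (An alternative is to rerun, inside the category of $\fk$-locally finite modules, the proofs of \cite[Corollary~38]{CCM23} and \cite[Corollary~28]{CC24} that $\Gamma_\zeta$ satisfies the universal property of the Serre quotient functor, checking that $\Gamma_\zeta$ sends $\zeta$-admissible projectives of $\mc O^{\mf q}_\Z$ to a projective generating family in $\mc W(\zeta)^{\mf q}$.)

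Composing ${}'\Gamma^{\mf q}_\zeta$ with the equivalence $\pi^{\mf q}:\mc O^{\mf q,\zeta\text{-pres}}\xrightarrow{\cong}\mc O^{\mf q}_\Z/\mc I^{\mf q}_\zeta$ of Proposition~\ref{Opres=quot:gen} yields an equivalence $\mc O^{\mf q,\zeta\text{-pres}}\xrightarrow{\cong}\mc W(\zeta)^{\mf q}$ which is, up to natural isomorphism, the restriction of $\Gamma_\zeta$ to $\mc O^{\mf q,\zeta\text{-pres}}$; transporting along it the properly stratified structure of Proposition~\ref{prop:para:strat:g} makes $\mc W(\zeta)^{\mf q}$ properly stratified, with indecomposable projective, standard, proper standard and simple objects $\Gamma_\zeta(P(\la))$, $\Gamma_\zeta(\Delta(\la))$, $\Gamma_\zeta(\blacktriangle(\la))$, $\Gamma_\zeta(S(\la))$ for $\la\in\Lambda(\zeta)^{\mf q}$, and the BGG-type reciprocity passing over verbatim. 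Finally, any $\la\in\Lambda(\zeta)^{\mf q}$ is integral and dominant on $\h\cap[\fk,\fk]$, so $N(\la),L(\la)\in\mc O^{\mf q}_\Z$; Proposition~\ref{prop:gammaq} gives $\Gamma_\zeta(N(\la))=N(\la,\zeta)$ and, $\la$ being also $W_\fl$-antidominant, $\Gamma_\zeta(L(\la))=L(\la,\zeta)$, so both $N(\la,\zeta)$ and $L(\la,\zeta)=\Gamma_\zeta(S(\la))$ lie in $\mc W(\zeta)^{\mf q}$.

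The step I expect to cost the most care is the middle paragraph: one must check that the functor obtained by factoring $\Gamma^{\mf q}_\zeta$ through $\pi$ is genuinely compatible with the comparison $\iota$ and that its essential image coincides with $\mc W(\zeta)^{\mf q}$ as defined via the Backelin functor; and, along the alternative route, the nontrivial point is that $\Gamma_\zeta$ still preserves projectivity in the parabolic setting, which is exactly where the hypotheses that $[\fl,\fl]$ and $[\fk,\fk]$ commute and that $\zeta$ vanishes on the simple roots of $[\fk,\fk]$ (both used in Proposition~\ref{prop:gammaq}) enter. Everything else is formal transport of structure.
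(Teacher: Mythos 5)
Your proposal is correct and follows essentially the same route as the paper: factor $\Gamma^{\mf q}_\zeta$ through the Serre quotient using its vanishing on $\mc I^{\mf q}_\zeta$, identify $\mc O^{\mf q}_\Z/\mc I^{\mf q}_\zeta$ as a full subcategory of $\mc O_\Z/\mc I_\zeta$ so that ${}'\Gamma^{\mf q}_\zeta$ is the restriction of the known equivalence ${}'\Gamma_\zeta$ (hence fully faithful, and essentially surjective by definition of $\mc W(\zeta)^{\mf q}$), then transport the properly stratified structure via Propositions~\ref{Opres=quot:gen} and \ref{prop:para:strat:g} and invoke Proposition~\ref{prop:gammaq} for $N(\la,\zeta)$ and $L(\la,\zeta)$. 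The extra detail you supply on why the comparison functor between the two Serre quotients is fully faithful is exactly what the paper compresses into the phrase ``by definition of the morphisms in a quotient category.''
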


\begin{proof}
The functor $\Gamma_\zeta$ vanishes on objects in $I_\zeta$ and hence vanishes on objects in $I_\zeta^{\mf q}$. Indeed, since we have $\mc I_\zeta\cap \mc O_\Z^{\mf q}=\mc I_\zeta^{\mf q}$, we conclude that $\Gamma_\zeta^{\mf q}$ vanishes on $\mc I^{\mf q}_\zeta$. Thus, by the universal property of Serre quotient, \cite[Corollaires III.1.2 and III.1.3]{Gabriel}, the restriction $\Gamma_\zeta^{\mf q}$ induces a quotient functor ${}'{\Gamma}^{\mf q}_\zeta:\mc O_\Z^{\mf q}/\mc I_\zeta^{\mf q}\rightarrow \mc W(\zeta)^{\mf q}$. Since $\mc O_\Z^{\mf q}$ is a full abelian subcategory of $\mc O_\Z$ with compatible abelian structure, by definition of the morphisms in a quotient category (see, e.g., \cite[\S4.2]{CCM23}), we see that $\mc O_\Z^{\mf q}/\mc I^{\mf q}_\zeta$ is a full subcategory of $\mc O_\Z/\mc I_\zeta$ and ${}'{\Gamma}^{\mf q}_\zeta={}'{\Gamma}_\zeta|_{\mc O_\Z^{\mf q}/\mc I^{\mf q}_\zeta}$. Now, we have that ${}'{\Gamma}_\zeta:\mc O_\Z/\mc I_\zeta\rightarrow W(\zeta)$ is full and faithful by \cite[Theorem 37]{CCM23} and \cite[Corollary 29]{CC24}. It follows that ${}'{\Gamma}_\zeta|_{\mc O_\Z^{\mf q}/\mc I^{\mf q}_\zeta}$ is full and faithful. Be definition, it is essentially surjective. This proves that ${}'{\Gamma}^{\mf q}_\zeta:\mc O_\Z^{\mf q}/\mc I^{\mf q}_\zeta \rightarrow \mc W(\zeta)^{\mf q}$ is an equivalence.

By Propositions \ref{Opres=quot:gen} and \ref{prop:para:strat:g}, we have $\mc O^{\mf q,\zeta\text{-pres}}\cong \mc O_\Z^{\mf q}/\mc I^{\mf q}_\zeta$ and they are properly stratified. Thus $\mc W(\zeta)^{\mf q}$ is also properly stratified by the equivalence ${}'{\Gamma}^{\mf q}_\zeta.$

Now by Proposition~ \ref{prop:gammaq}, the modules $N(\la,\zeta)$ and their unique irreducible quotients $L(\la,\zeta)$ lie in the category $\mc W(\zeta)^{\mf q}$, for $\la\in\Lambda(\zeta)^{\mf q}$.
\end{proof}

\section{Super duality for Whittaker modules}\label{sec:SD:MMS}

In this section we formulate Lie superalgebras $\sg_n$ and  Lie algebras $\g_n$ of classical type, for $0\le n \le \infty$. We apply the results in the prior sections to obtain an equivalence of certain parabolic categories between Whittaker modules over $\g_n$ and $\sg_n$ as $n$ tends to infinity.

\subsection{Super duality for parabolic BGG categories}
\label{sec:SD1}

We shall recall the setup from \cite[\S6.1]{CW12} below.

We have head diagrams \framebox{$\mf H^{\mf x}$}, where $\mf x=\mf{a,b,c,d}$, representing Dynkin diagrams of simple Lie algebras of type $A,B,C,D$, respectively, and tail diagrams \framebox{$\mf T_n$} and \framebox{$\ov{\mf T}_n$}, representing Dynkin digrams of Lie superalgebras $\gl(1+n)$ and $\gl(1|n)$, respectively. Connecting the type A end of the head diagram \framebox{$\mf H^{\mf x}$} with the first vertex of the tail diagrams \framebox{$\mf T_n$} and \framebox{$\ov{\mf T}_n$}, we get Dynkin diagrams of Lie algebras and Lie superalgebras of type $A,B,C,D$.  (We remark that our \framebox{$\mf H^{\mf x}$} here is denoted by \framebox{$\mf{k^x}$} in \cite[Chapter 6]{CW12}.)

We denote by $(\g_n,\sg_n)$ a pair consisting of a Lie algebra $\g_n$ and a Lie superalgebra $\sg_n$, where $\g_n$ is the Lie algebra corresponding the Dynkin diagram \framebox{$\mf H^{\mf x}$}---\framebox{${\mf T}_n$} and $\sg_n$ is the Lie superalgebra corresponding the Dynkin diagram \framebox{$\mf H^{\mf x}$}---
\framebox{$\ov{\mf T}_n$}, respectively.

\begin{ex}
\begin{enumerate}
    \item
Let \framebox{$\mf H^{\mf a}$} be the Dynkin diagram for $\mf{gl}(m)$. Then we have $\g_n\cong\mf{gl}(m+n)$ and $\sg_n\cong\mf{gl}(m|n)$.
\item
Let \framebox{$\mf H^{\mf b}$} be the Dynkin diagram for $\mf{so}(2m+1)$. Then we have $\g_n\cong\mf{so}(2m+2n+1)$ and $\sg_n\cong\mf{osp}(2m+1|2n)$.
\end{enumerate}
\end{ex}

We have natural embeddings $\g_n \subset \g_{n+1}$ and $\sg_n \subset \sg_{n+1}$, and hence the respective direct limits $\g_\infty$ and $\sg_\infty$ are valid. In the cases of $\mf{x}=\mf{b,c,d}$ we recall that to deal with $\g_\infty$ and $\sg_\infty$ and truncations to $\g_n$ and $\sg_n$ for finite $n$, it is more convenient and conceptual to introduce trivial central extensions of $\g_n$ and $\sg_n$ by a one-dimensional central element and work with these instead. But we shall ignore this issue as much as possible to keep subsequent notation and presentation simpler, the reader is referred to \cite[\S6.1.6]{CW12} for the precise details. We let $\mc O_n$ and $\ov{\mc O}_n$ denote the corresponding BGG categories of $\g_n$- and $\sg_n$-modules, respectively. Denote the corresponding Verma modules of highest weight $\la\in\h^*$ by $M_n(\la)$ and $\ov{M}_n(\la)$, and their simple heads by $L_n(\la)$ and $\ov{L}_n(\la)$, respectively. Here, $\h$ denotes the Cartan subalgebra of either $\g_n$ or $\sg_n$.

Let $\mf k$ be the Levi subalgebra of $\sg_n$ with semisimple summand $[\gl(n),\gl(n)]$, where $\gl(n)\subset \gl(1|n)$ and $\gl(1|n)$ is the Lie superalgebra corresponding to the Dynkin diagram \framebox{$\ov{\mf T}_n$}. We also regard the same $\mf k$ as a Levi subalgebra of $\g_n$, where $\gl(n)\subset \gl(1+n)$ for $\gl(1+n)$ corresponding to the Dynkin diagram \framebox{$\mf T_n$}. We shall use $\ufk$ denote this copy of $\gl(n)$. Let $\mf b$ and $\ov{\mf b}$ denote the standard Borel subalgebras containing $\h$ corresponding to the Dynkin diagrams of $\g_n$ and $\sg_n$, respectively. Denote the associated parabolic subalgebras by $\mf q=\mf b+\mf k$ and $\ov{\mf q}=\ov{\mf b}+\mf k$, respectively. We let $\mc O^+_n$ and $\ov{\mc O}^+_n$ be the full subcategories of $\mc O_n^{\mf q}$ and $\ov{\mc O}_n^{\mf q}$, respectively, consisting of objects $M$ on which the action of $\ufk$ is polynomial. The corresponding parabolic Verma modules in $\mc O^+_n$ and $\ov{\mc O}^+_n$ of highest weight $\la\in\h^*$ are denoted by $N_n(\la)$ and $\ov{N}_n(\la)$, respectively. We note that if $\{E_{ii}|1\le i\le n\}$, is the standard basis for the Cartan subalgebra $\ufh$ of $\ufk$,
and $\{\ep_i|1\le i\le n\}$ is its dual basis, then $\la|_{\ufh}=\sum_{i=1}^n\la_i\ep_i$ for a partition $(\la_1,\ldots,\la_n)$. The subset of such weights in $\h^*$ will be denoted by $\h^{*,+}$. For $\la\in\h^{*,+}$, we let $\la^\natural\in\h^{*,+}$ be obtained from $\la$ by replacing $\la|_{\ufh}=\sum_{i=1}^n\la_i\ep_i$ by $\sum_{i=1}\la'_i\ep_i$, where $(\la'_1,\la'_2,\ldots)$ is the conjugate partition of $(\la_1,\la_2,\ldots)$. Furthermore, the categories $\mc O_n^+$ and $\ov{\mc O}_n^+$ have indecomposable tilting modules of highest weight $\la\in\h^{\ast,+}$, denoted by $T_n(\la)$ and $\ov{T}_n(\la)$, respectively.

The relationships between $\mc O_n^+$, for various $n$, between $\ov{\mc O}_n^+$, for various $n$, are given by the truncation functors, which we shall recall below. The reader is referred to \cite[\S6.2.5]{CW12} for more details.

Let $M\in\mc O_n^+$. The $\ufk$-module $\text{Res}^\g_{\ufk} M$ is a direct sum of finite-dimensional irreducible polynomial $\ufk$-modules. Thus we have a weight space decomposition:
\begin{align*}
    M=\sum_{\mu\in\h^*}M_\mu,
\end{align*}
such that $\mu|_{\ufh} =\sum_{i=1}^n{\mu_i}\ep_i$, with $\mu_i\in\mathbb N$. For $m< n$, we define
\begin{align*}
    \text{Tr}^{n}_m(M):=\sum_{\mu_j=0,\forall j>m} M_\mu.
\end{align*}
Then it is easy to see that $\text{Tr}^{n}_m(M)\in\mc O^+_m$.

\begin{prop}\label{prop:trun:O}
    For $1\le m< n\le\infty$, $\text{Tr}^{n}_m:\mc O^+_n\rightarrow\mc O^+_m$ defines an exact functor. Furthermore, for $X=N,L,T$, and $\la\in\h^{\ast,+}$ with $\la|_{\ufh}=\sum_{i=1}^n\la_i\ep_i$, we have
    \begin{align*}
        \text{Tr}^{n}_m(X_n(\la))=\begin{cases}
            X_m(\la), \text{ if }\la_{m+1}=0,\\
            0, \text{ otherwise.}
        \end{cases}
    \end{align*}
\end{prop}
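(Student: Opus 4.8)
The plan is to first establish exactness, then the formulas on standard objects, then bootstrap to simples and tiltings using exactness. Exactness of $\mathrm{Tr}^n_m$ is essentially formal: taking a weight-space sum $\sum_{\mu_j=0,\,\forall j>m}M_\mu$ is an exact operation on the category of $\ufh$-semisimple modules with finite-dimensional weight spaces, since a short exact sequence $0\to A\to B\to C\to 0$ in $\mc O_n^+$ splits as $\ufh$-modules in each weight space, and the condition $\mu_j=0$ for $j>m$ selects a sub-sum of weight spaces that is preserved by the $\g_m$-action (here one uses that $\g_m\subset\g_n$ and that $\g_m$ only moves the first $m$ of the $\ep$-coordinates, so the selected subspace is a $\g_m$-submodule). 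One must also check $\mathrm{Tr}^n_m(M)$ genuinely lands in $\mc O_m^+$ — finite generation, local finiteness over the nilradical of the Borel of $\g_m$, $\ufk_{(m)}$-semisimplicity and polynomiality — all of which is inherited from $M$ restricted to the selected weight spaces; this is the content already cited from \cite[\S6.2.5]{CW12}.

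Next I would compute $\mathrm{Tr}^n_m$ on parabolic Verma modules $N_n(\la)$. As an $\ufk$-module (equivalently, by looking at $\ep$-weights) one knows the character of $N_n(\la)$; restricting to weights with vanishing coordinates beyond position $m$ and matching characters should give $N_m(\la)$ when $\la_{m+1}=0$ and $0$ otherwise. The cleanest route: $N_n(\la)=\mathrm{Ind}$ from the parabolic, so as a module over the ``lower'' part one has $N_n(\la)\cong U(\mf u_n^-)\otimes(\text{finite-dim'l }\mf k\text{-module}\otimes\C_{\la^\perp})$; the $\ep$-coordinates beyond $m$ are increased only by root vectors in $\mf u_n^-$ involving the tail variables $\ep_{m+1},\dots$, and a weight-bookkeeping argument shows the $\mathrm{Tr}^n_m$-selected part is exactly $U(\mf u_m^-)\otimes(\cdots)$, i.e.\ $N_m(\la)$, provided $\la$ itself already has $\la_{m+1}=0$; if $\la_{m+1}\neq 0$ the highest weight already fails the selection and, since all other weights are obtained by subtracting positive roots (which cannot increase any $\ep$-coordinate back up in the relevant pattern), nothing survives. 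The same character/weight comparison handles tilting modules $T_n(\la)$: a tilting module has both a $N$-flag and a $\nabla$-flag, $\mathrm{Tr}^n_m$ is exact and sends $N$'s to $N$'s (and dually $\nabla$'s to $\nabla$'s, using compatibility of $\mathrm{Tr}$ with the duality), hence $\mathrm{Tr}^n_m(T_n(\la))$ is a tilting module in $\mc O_m^+$ (or zero); identifying its highest weight as $\la$ when $\la_{m+1}=0$ pins it down as $T_m(\la)$ by the uniqueness/parametrization of indecomposable tiltings.

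For the simple modules $L_n(\la)$, the argument is by exactness plus induction on the length in the dominance order. Exactness gives $\mathrm{Tr}^n_m(L_n(\la))$ as a subquotient pattern inside $\mathrm{Tr}^n_m(N_n(\la))$; using that $N_n(\la)$ has simple head $L_n(\la)$ and its other composition factors are $L_n(\mu)$ with $\mu<\la$, together with the already-established formula on $N$'s, one shows $\mathrm{Tr}^n_m(L_n(\la))=L_m(\la)$ if $\la_{m+1}=0$ and $=0$ otherwise — the vanishing case because every weight of $L_n(\la)$ inherits the bound $\mu_{m+1}>0$ forced already at the top weight. The main obstacle I anticipate is the $n=\infty$ case and the central-extension subtlety in types $\mf b,\mf c,\mf d$: one must make sure the direct-limit category $\mc O_\infty^+$ is set up so that $\mathrm{Tr}^\infty_m$ is well defined and still exact, and that the weight combinatorics (which live on the $\ep$-coordinates, untouched by the central extension) go through verbatim; this is precisely where I would lean hardest on \cite[\S6.1.6, \S6.2.5]{CW12} rather than reprove anything.
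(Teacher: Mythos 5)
The paper offers no proof of this proposition at all: it is recalled from \cite[\S 6.2.5]{CW12}, and the only justification given in the text is the remark that ``it is easy to see that $\text{Tr}^n_m(M)\in\mc O^+_m$.'' So there is nothing to compare against line by line; your sketch follows the standard route of loc.~cit.\ (exactness first, then $N$, then $L$ and $T$ by exactness), and the exactness argument and the treatment of $L$ are fine modulo routine details. Two steps, however, do not work as written.

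First, your justification of the vanishing $\text{Tr}^n_m(N_n(\la))=0$ when $\la_{m+1}\neq 0$ --- that all weights are obtained from $\la$ by subtracting positive roots, ``which cannot increase any $\ep$-coordinate back up'' --- is false as a statement about positive roots: subtracting a head-minus-tail root $\delta_a-\ep_i$ increases $\mu_i$, and subtracting $\ep_i-\ep_j$ with $i<j$ increases $\mu_j$, so individual coordinates $\mu_{m+1},\dots,\mu_n$ can certainly be driven back to $0$. The robust argument goes through the polynomial $\gl(n)$-structure: the negative nilradical opposite to $\mf q$ is a polynomial $\gl(n)$-module, so by Pieri/Littlewood--Richardson every irreducible $\gl(n)$-constituent of $N_n(\la)$ has highest weight a partition containing $\la|_{\ufh}$, hence of length at least $m+1$ when $\la_{m+1}>0$; and a polynomial irreducible $\gl(n)$-module of length greater than $m$ has no weight supported on the first $m$ coordinates, so the truncation kills every constituent. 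Second, in the tilting case, exactness together with preservation of $\Delta$- and $\nabla$-flags only shows that $\text{Tr}^n_m(T_n(\la))$ is a tilting module in whose $\Delta$-flag $N_m(\la)$ occurs exactly once; the ``uniqueness/parametrization of indecomposable tiltings'' does not exclude extra direct summands $T_m(\mu)$ with $\mu<\la$. One needs in addition that truncation preserves the relevant Hom-spaces, e.g.\ $\Hom(M,N)\cong\Hom(\text{Tr}^n_m M,\text{Tr}^n_m N)$ for $M$ with a $\Delta$-flag and $N$ with a $\nabla$-flag (as in \cite[Theorem 3.2]{CL20}), so that $\End\bigl(\text{Tr}^n_m(T_n(\la))\bigr)\cong\End(T_n(\la))$ remains local. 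With these two repairs your sketch is complete.
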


We remark that we can define analogously an exact truncation functor $\ov{\text{Tr}}^{n}_m:\ov{\mc O}^+_n\rightarrow\ov{\mc O}^+_m$, for $n>m$, and have a super-analogue of Proposition~ \ref{prop:trun:O}.

\begin{prop}\label{prop:trun:SO}
    Let $1\le m <n\le\infty$. For $\ov{X}=\ov{N},\ov{L},\ov{T}$, and $\la\in\h^{\ast,+}$ with $\la|_{\ufh}=\sum_{i=1}^n\la_i\ep_i$, we have
    \begin{align*}
        \ov{\text{Tr}}^{n}_m(\ov{X}_n(\la))=\begin{cases}
            \ov{X}_m(\la), \text{ if }\la_{m+1}=0,\\
            0, \text{ otherwise.}
        \end{cases}
    \end{align*}
\end{prop}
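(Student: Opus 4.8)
The plan is to mirror the proof of Proposition~\ref{prop:trun:O} for the Lie superalgebra $\sg_n$, since the only subtlety is whether the ``super'' tail $\ov{\mf T}_n$ interferes with the combinatorics, and it does not. First I would establish that $\ov{\text{Tr}}^n_m:\ov{\mc O}^+_n\to\ov{\mc O}^+_m$ is well-defined and exact exactly as in the non-super case: for $M\in\ov{\mc O}^+_n$ the restriction to $\ufk\cong\gl(n)$ is a direct sum of finite-dimensional polynomial modules, so $M$ has a weight decomposition with $\mu|_{\ufh}=\sum_{i=1}^n\mu_i\ep_i$, $\mu_i\in\mathbb N$, and one sets $\ov{\text{Tr}}^n_m(M):=\sum_{\mu_j=0,\ \forall j>m}M_\mu$. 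Exactness is immediate because this is a direct summand functor on weight spaces, and one checks $\ov{\text{Tr}}^n_m(M)\in\ov{\mc O}^+_m$ by noting that the subalgebra $\sg_m$ stabilizes the listed weight spaces (this uses that $\sg_m\subset\sg_n$ is the subdiagram obtained by truncating the tail, so its root vectors change only the last $n-m$ coordinates among the $\ep_i$, together with the head coordinates which are untouched). At $n=\infty$ one takes the union/inverse-limit description as in \cite[\S6.2.5]{CW12}, exactly parallel to the non-super setting.

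Next I would prove the formula $\ov{\text{Tr}}^n_m(\ov{X}_n(\la))=\ov{X}_m(\la)$ if $\la_{m+1}=0$ and $0$ otherwise, for $\ov X=\ov N,\ov L,\ov T$. For $\ov X=\ov N$ the parabolic Verma module $\ov N_n(\la)=\mathrm{Ind}$ from the Levi $\mf L=\mf l+\mf k$, and as a $\ufk$-module it is $U(\ov{\mf u}_-^{\,n})\otimes(\text{head part})\otimes L(\ufk,\la|_{\ufh})$; the point is that the coordinates $\ep_i$ with $i>m$ appearing in weights of $U(\ov{\mf u}_-^{\,n})$ are nonnegative and are strictly positive on exactly the negative root vectors involving the last $n-m$ nodes of $\ov{\mf T}_n$, so killing those coordinates leaves precisely $\ov N_m(\la)$ when $\la_{m+1}=0$, and leaves nothing (the highest weight itself is truncated away) when $\la_{m+1}\neq 0$. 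For $\ov X=\ov L$ one uses exactness of $\ov{\text{Tr}}^n_m$ together with the already-known compatibility of the truncation functor with the parametrizing poset: $\ov{\text{Tr}}^n_m$ sends the simple top of $\ov N_n(\la)$ to the simple top of $\ov N_m(\la)$ and kills all other composition factors with a nonzero $(m{+}1)$st coordinate, exactly as in \cite[Prop.~6.9 / \S6.4]{CW12}. For $\ov X=\ov T$, the tilting module $\ov T_n(\la)$ has a finite $\ov N$-flag, and $\ov{\text{Tr}}^n_m$ is exact and $\ov N$-flag preserving by the $\ov N$ case, so $\ov{\text{Tr}}^n_m(\ov T_n(\la))$ is a module in $\ov{\mc O}^+_m$ with an $\ov N$-flag and a dual $\ov\nabla$-flag (apply the same argument to the dual), hence tilting; identifying its highest weight as $\la$ (when $\la_{m+1}=0$) pins it down as $\ov T_m(\la)$ by the classification of indecomposable tiltings, and when $\la_{m+1}\neq 0$ one sees directly that every $\ov N$-subquotient is killed, so the image is $0$.

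The main obstacle, such as it is, is purely bookkeeping: one must be careful that the ``tail'' odd root of $\ov{\mf T}_n$ (the isotropic simple root linking the head copy of $\gl$-type data to the $\gl(1|n)$ tail) is handled correctly, i.e.\ that it does not introduce weights with a negative $\ep_i$-coordinate for $i\le n$ and that it is not among the roots whose root vectors get truncated for $m<n$. This is exactly the reason the polynomial condition on $\ufk=\gl(n)$ (as opposed to all of $\gl(1|n)$) is imposed, and it is verified verbatim as in \cite[\S6.1, \S6.2]{CW12}. Since every ingredient---exactness of truncation, the branching of parabolic Vermas over the truncated Levi, the poset compatibility for simples, and the $\ov N$-flag preservation for tiltings---has a super-analogue already recorded in \cite{CW12} (and in \cite{CLW11} for types $\mf b,\mf c,\mf d$), the proof is a direct transcription; I would simply remark this and refer the reader to \cite[\S6.2.5]{CW12}, noting that the argument is identical to that of Proposition~\ref{prop:trun:O} with $\ov{\mf T}_n$ in place of $\mf T_n$.
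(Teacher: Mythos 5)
Your proposal is correct and matches the paper's intent exactly: the paper gives no argument for Proposition~\ref{prop:trun:SO} beyond remarking that the truncation functor $\ov{\text{Tr}}^n_m$ is defined analogously to $\text{Tr}^n_m$ and that the statement is the super-analogue of Proposition~\ref{prop:trun:O}, deferring to \cite[\S6.2.5]{CW12}. Your write-up is a faithful expansion of that standard argument (exactness of the weight-space truncation, nonnegativity of the $\ufh$-weights of $U(\ov{\mf u}^-)$ for the $\ov N$ case, exactness for $\ov L$, and $\ov N$-flag preservation for $\ov T$), with only a harmless notational slip in describing the Levi from which $\ov N_n(\la)$ is induced ($\mf k$, not $\mf l+\mf k$).
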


A relationship between $\mc O^+_n$ and $\ov{\mc O}^+_n$ is given by the so-called super duality \cite[Chapter 6]{CW12}, which holds only at $n=\infty$. In the sequel we shall make it a convention of notation to sometimes drop the subscript $n$ when considering $n=\infty$.

\begin{thm}\label{thm:SD}\cite{CW08, CL10, CLW11}
We have an equivalence of the BGG categories for $\g_\infty$ and $\sg_\infty$:
\begin{align*}
    \mc O^+\cong\ov{\mc O}^+,
\end{align*}
under which $N(\la)$, $L(\la)$ and $T(\la)$ correspond to $\ov{N}(\la^\natural)$, $\ov{L}(\la^\natural)$ and $\ov{T}(\la^\natural)$, respectively, for $\la\in \h^{*,+}$.
\end{thm}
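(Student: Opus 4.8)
The plan is to prove Theorem~\ref{thm:SD} by the bridging-superalgebra method underlying \cite{CL10, CLW11}. First I would introduce a third Lie superalgebra $\widetilde{\g}$ whose Dynkin diagram is the head diagram $\mf H^{\mf x}$ attached to an infinite tail that interlaces the two tails $\mf T_\infty$ and $\ov{\mf T}_\infty$; by construction $\widetilde{\g}$ contains both $\g_\infty$ and $\sg_\infty$ as subalgebras with common head, and its tail Levi subalgebra is $\gl(\infty|\infty)$ (up to the usual one-dimensional central extension in types $\mf b, \mf c, \mf d$). Let $\widetilde{\mc O}^+$ be the parabolic category of integral, locally $\fu$-finite $\widetilde{\g}$-modules that are polynomial over the tail $\gl(\infty|\infty)$. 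After the evident identification of Cartan subalgebras, $\widetilde{\mc O}^+$ is a highest weight category with irreducibles $\widetilde{L}(\la)$, parabolic Vermas $\widetilde{N}(\la)$, and tiltings $\widetilde{T}(\la)$ indexed by the same set $\h^{*,+}$, and it carries truncation subcategories $\widetilde{\mc O}^+_n$ with exact functors $\widetilde{\mathrm{Tr}}^n_m$ parallel to Propositions~\ref{prop:trun:O} and~\ref{prop:trun:SO}.

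Next I would produce the two comparison functors. Define $\mathrm T : \widetilde{\mc O}^+ \to \mc O^+$ and $\ov{\mathrm T} : \widetilde{\mc O}^+ \to \ov{\mc O}^+$ by restricting a module $M = \bigoplus_\mu M_\mu$ along $\g_\infty \subset \widetilde{\g}$ (respectively $\sg_\infty \subset \widetilde{\g}$) and keeping the largest submodule still polynomial over the smaller tail --- equivalently, retaining the weight spaces whose tail component is supported on the ``even-tail'' (respectively ``odd-tail'') coordinates. The content is then: (i) $\mathrm T$ and $\ov{\mathrm T}$ are exact and send parabolic Verma, irreducible, and tilting objects to the corresponding objects, never to $0$ thanks to the polynomial restriction; that is, $\mathrm T(\widetilde{X}(\la)) = X(\la)$ and $\ov{\mathrm T}(\widetilde{X}(\la)) = \ov{X}(\la^\natural)$ for $X = N, L, T$; (ii) the conjugate partition $\la^\natural$ intervenes in $\ov{\mathrm T}$ precisely because transferring a polynomial weight across the purely odd part of the bridging tail transposes it (the hook Schur function $hs_\la$ restricting to $s_{\la'}$ on the relevant copy of $\gl(\infty)$). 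All of this reduces to rank-one $\gl$-computations on the tail --- the head being untouched --- together with the compatibility of parabolic induction with the odd reflections of $\widetilde{\g}$.

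Finally I would upgrade $\mathrm T$ and $\ov{\mathrm T}$ to equivalences. The key input is a comparison of $\Hom$ and $\mathrm{Ext}^1$ spaces between parabolic Verma modules: chaining the odd-reflection functors that identify $\widetilde{\mc O}^+$ for successive choices of Borel, one shows that for fixed $\la, \mu \in \h^{*,+}$ and $n \gg 0$ (depending on $\la, \mu$) these spaces stabilize in $n$ and agree across the three families, whence $\mathrm T$ and $\ov{\mathrm T}$ preserve projective covers and, by Ringel duality, are fully faithful on tilting objects. Granting this, pass to finite truncations (finite rank $n$ together with a finite order ideal of weights), where each of the three categories is equivalent to modules over a finite-dimensional algebra and, through Ringel duality, is recovered from the endomorphism algebra of its characteristic tilting module; since $\mathrm T, \ov{\mathrm T}$ identify these endomorphism algebras one obtains equivalences of the truncations compatible with the truncation functors $\widetilde{\mathrm{Tr}}^n_m, \mathrm{Tr}^n_m, \ov{\mathrm{Tr}}^n_m$, and taking the limit $n \to \infty$ yields $\mc O^+ \cong \widetilde{\mc O}^+ \cong \ov{\mc O}^+$ with $N(\la), L(\la), T(\la)$ matched to $\ov{N}(\la^\natural), \ov{L}(\la^\natural), \ov{T}(\la^\natural)$. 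The main obstacle is exactly that $\Hom$/$\mathrm{Ext}^1$ comparison: at any fixed finite rank the naive comparison fails, and one must show the discrepancy is carried by weights that see the odd part of the bridging tail and so disappears once $n$ is large --- this is where the odd-reflection technology and a delicate induction on the structure of parabolic Vermas and tiltings in $\widetilde{\mc O}^+$ do the real work. As a byproduct one recovers the irreducible characters in $\mc O^+$ and $\ov{\mc O}^+$ in terms of the super Kazhdan--Lusztig polynomials of \cite{CL10, CLW11}, which was the original motivation.
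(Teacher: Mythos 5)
The paper does not actually prove Theorem~\ref{thm:SD}: it is quoted from \cite{CW08, CL10, CLW11} (see also \cite[Chapter~6]{CW12}), so there is no internal proof to measure your proposal against. That said, your outline is a faithful reconstruction of the strategy of those references: the bridging superalgebra $\widetilde{\g}$ with the interlaced tail, the two exact weight-truncation functors $\mathrm{T}, \ov{\mathrm{T}}$ defined by retaining weight spaces supported on the even (resp.\ odd) tail coordinates, the matching of parabolic Verma, simple and tilting objects (with the conjugate $\la^\natural$ produced on the super side), and the upgrade to an equivalence via identification of Hom spaces and endomorphism algebras.

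Two cautions. First, the entire weight of the argument sits in the step you defer to ``odd-reflection technology and a delicate induction'': that $\mathrm{T}$ and $\ov{\mathrm{T}}$ induce isomorphisms on $\Hom$ and $\Ext^1$ between modules with Verma flags, and that $\mathrm{T}(\widetilde{L}(\la))$ is genuinely irreducible rather than merely a nonzero quotient of $N(\la)$ with simple head. In \cite{CW12} the latter is obtained cleanly from the compatibility of the truncation functors with the duality $\vee$ (the simple object is the image of the canonical map $\widetilde{N}(\la)\to\widetilde{N}(\la)^\vee$, and this description is preserved); the former is proved directly for modules with Verma flags, not by chaining odd reflections, which enter instead in computing how highest weights transform across changes of Borel (this is where $\la^\natural$ comes from). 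Second, your closing reduction to ``finite truncations'' must be handled with care: the finite-rank categories $\mc O^+_n$ and $\ov{\mc O}^+_n$ are \emph{not} equivalent, so the truncation has to be by a finite order ideal of weights, with the rank $n$ then taken large depending on that ideal so that the relevant Hom spaces have stabilized; as phrased, ``equivalences of the truncations compatible with the truncation functors'' risks being read as a (false) finite-rank super duality. With those points made precise, your proposal is the standard proof.
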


Let $\mf H$ denote the Lie subalgebra of $\g_n$ and $\sg_n$ whose Dynkin diagram is \framebox{$\mf H^{\mf x}$}. Let $\mf N$ be a nilradical of the standard Borel of $\mf H$ and $\zeta:\mf N\rightarrow\C$ be a character of $\mf N$. Let $\mf{n}_n$ and $\ov{\mf{n}}_n$ be the nilradicals of the standard Borel subalgebras of $\g_n$ and $\sg_n$, respectively. Thanks to $\mf N \subset \mf{n}_n$ and $\mf N \subset \ov{\mf{n}}_n$, we can extend $\zeta$ trivially to a character of $\n_n$ and $\ov{\n}_n$, respectively. We shall again denote them by $\zeta$ by abuse of notation.

Recall the MMS categories introduced in \S\ref{subsec:MMS}.
Denote the categories of MMS Whittaker modules of $\g_n$ and $\sg_n$ by $\MS(\zeta)_n$ and $\ov{\MS}(\zeta)_n$, respectively. Denote by $\MS(\zeta)_n^+$ (and respectively, $\ov{\MS}(\zeta)_n^+$) the full subcategory of $\MS(\zeta)_n$ (and respectively, $\ov{\MS}(\zeta)_n$) consisting of objects on which the $\gl(n)$-action is polynomial. We recall the construction of parabolic standard Whittaker modules from \eqref{def:Nzeta}. We denote the corresponding parabolic standard Whittaker modules of $\g_n$ and $\sg_n$ by $N(\la,\zeta)$ and $\ov{N}(\la,\zeta)$, respectively. Similarly, we have the self-explanatory notations of $L(\la,\zeta)$ and $\ov{L}(\la,\zeta)$.

We shall denote the Backelin functor for $\g_n$ and $\sg_n$ by $\Gamma_\zeta$ and
$\ov{\Gamma}_\zeta$, respectively.

\begin{lem}\label{lem:gammaq2}
  The functors $\Gamma_\zeta$ and $\ov{\Gamma}_\zeta$ restrict to the following functors:
\begin{align*}
    \Gamma^+_\zeta:\mc O_n^+\longrightarrow \MS(\zeta)_n^+,\qquad
    \ov{\Gamma}^+_\zeta: \ov{\mc O}^+_n\longrightarrow \ov{\MS}(\zeta)_n^+.
\end{align*}
Furthermore, for $\la\in\h^{\ast,+}$, we have
   \begin{align*}
   &\Gamma_\zeta\left(N(\la)\right)=N(\la,\zeta),\\
    &\Gamma_\zeta\left(L(\la)\right)=
       \begin{cases}
           L(\la,\zeta),\text{ if }\la\text{ is }W_{\fl}\text{-antidominant}\\
           0, \text{ otherwise};
       \end{cases} \\
    &\ov{\Gamma}_\zeta\left(\ov{N}(\la)\right)=\ov{N}(\la,\zeta),\\
    &\ov{\Gamma}_\zeta\left(\ov{L}(\la)\right)=
       \begin{cases}
           \ov{L}(\la,\zeta),\text{ if }\la\text{ is }W_{\fl}\text{-antidominant}\\
           0, \text{ otherwise}.
       \end{cases}
   \end{align*}
\end{lem}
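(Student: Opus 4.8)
The plan is to deduce Lemma~\ref{lem:gammaq2} directly from the general parabolic results already established in Section~\ref{sec:WH}, by checking that the polynomiality condition defining the ``$+$'' subcategories is compatible with the Backelin functor. Concretely, both $\Gamma_\zeta$ and $\ov\Gamma_\zeta$ are the Backelin functors attached to the nilradicals $\mf n_n \supset \mf N$ and $\ov{\mf n}_n \supset \mf N$ with the character $\zeta$ extended trivially, so Proposition~\ref{prop:gammaq} applies verbatim to each of $\g_n$ and $\sg_n$ once we identify $\mf q$ with the parabolic $\mf b + \mf k$ (resp.\ $\ov{\mf b}+\mf k$) and $\fl$ with the Levi subalgebra of $\mf H$ cut out by $\zeta$. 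The only thing beyond Proposition~\ref{prop:gammaq} that needs justification is that $\Gamma_\zeta$ sends $\mc O_n^+$ into $\MS(\zeta)_n^+$, i.e.\ preserves the stronger condition that the $\ufk$-action (resp.\ $\gl(n)$-action) be \emph{polynomial}, not merely that $U(\mf q)$ act locally finitely.

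First I would recall, as in the proof of Proposition~\ref{prop:gammaq}, that the Backelin functor commutes with restriction to any subalgebra and with tensoring by finite-dimensional modules; in particular $\mathrm{Res}^{\g_n}_{\ufk}\Gamma_\zeta(M)\cong \Gamma_\zeta\bigl(\mathrm{Res}^{\g_n}_{\ufk}M\bigr)$. Since $\zeta$ is trivial on $\mf N\cap\ufk = \{0\}$ (as $\ufk \cong \gl(n)$ sits in the tail, disjoint from the head diagram carrying $\zeta$), the Backelin functor with respect to $\mf n_n$, restricted to $\ufk$, is the Backelin functor with respect to the \emph{zero} character on a subalgebra $\mf n_n\cap\ufk$ of $\ufk$, which is just the identity on the underlying $\ufk$-module structure up to the completion/filtration bookkeeping. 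Hence $\mathrm{Res}^{\g_n}_{\ufk}\Gamma_\zeta(M)$ has the same composition factors over $\ufk$ as $\mathrm{Res}^{\g_n}_{\ufk}M$, which are polynomial by hypothesis; therefore $\Gamma_\zeta(M)\in\MS(\zeta)_n^+$. The same argument gives $\ov\Gamma_\zeta(\ov{\mc O}_n^+)\subseteq\ov{\MS}(\zeta)_n^+$.

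For the formulas on standard and simple modules: the parabolic standard Whittaker modules $N(\la,\zeta)$ and $\ov N(\la,\zeta)$ of $\g_n$ and $\sg_n$ are by definition the ones built in \eqref{def:Nzeta} for the parabolic $\mf q$ (resp.\ $\ov{\mf q}$), so Proposition~\ref{prop:gammaq} gives directly $\Gamma_\zeta(N(\la))=N(\la,\zeta)$ and $\Gamma_\zeta(L(\la))$ equal to $L(\la,\zeta)$ or $0$ according to the $W_\fl$-antidominance of $\la$; the super statements follow from the same proposition applied to $\sg_n$. One point to note is that $\la\in\h^{\ast,+}$ has $\la|_{\ufh}$ equal to a partition, hence in particular dominant integral on $\h\cap[\fk,\fk]$, so the hypothesis of Proposition~\ref{prop:gammaq} is met and the parabolic Verma $N(\la)$ (resp.\ $\ov N(\la)$) is defined.

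The main obstacle, and the only place requiring care, is the compatibility of the polynomiality constraint with the infinitary completion $\underline M = \prod_\mu M_\mu$ used to define $\Gamma_\zeta$: a priori, passing to the completion could create infinite-dimensional $\ufk$-isotypic pieces or non-polynomial weights. The resolution is that $\Gamma_\zeta(M)$ is a \emph{submodule} of $\underline M$ cut out by local $\mf n_n$-nilpotency, and the grading by $\theta$-eigenvalues together with the finiteness built into objects of $\mc O_n$ forces each $\ufk$-isotypic component of $\Gamma_\zeta(M)$ to coincide with that of $M$ (this is exactly the content of the fact, already used in the proof of Proposition~\ref{prop:gammaq}, that restriction commutes with $\Gamma_\zeta$); so no non-polynomial $\ufk$-weights appear. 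Once this is in hand, the lemma is a direct specialization of Propositions~\ref{prop:gamma:O} and \ref{prop:gammaq} to the pair $(\g_n,\sg_n)$, with no further computation.
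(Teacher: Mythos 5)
Your overall strategy coincides with the paper's: everything except the preservation of polynomiality is a direct specialization of Propositions~\ref{prop:gamma:O} and \ref{prop:gammaq} to $\g_n$ and $\sg_n$ (and you correctly note that $\la\in\h^{\ast,+}$ is dominant integral on $\h\cap[\fk,\fk]$, so those propositions apply). The one genuinely new point --- that $\Gamma_\zeta$ carries ``polynomial over $\ufk$'' to ``polynomial over $\ufk$'' --- is exactly where your argument contains a false claim. You assert that $\mathrm{Res}^{\g_n}_{\ufk}\Gamma_\zeta(M)$ has the \emph{same} $\ufk$-composition factors, indeed the same isotypic components, as $\mathrm{Res}^{\g_n}_{\ufk}M$. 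Taking $M=L(\la)$ with $\la$ not $W_{\fl}$-antidominant gives $\Gamma_\zeta(M)=0$ while $M\neq 0$, so this equality fails. Moreover, the commutation used in the proof of Proposition~\ref{prop:gammaq} is $\mathrm{Res}^{\g}_{\fl}\circ\Gamma_\zeta\cong\Gamma_\zeta\circ\mathrm{Res}^{\g}_{\fl}$, with the Backelin functor \emph{of $\fl$} appearing on the right; it does not say that $\Gamma_\zeta$ acts as the identity on the $\ufk$-module structure, and ``the Backelin functor commutes with restriction to any subalgebra'' is not a meaningful statement in general (the restriction of $M$ to $\ufk$ has infinite-dimensional $\ufh$-weight spaces, so the $\ufk$-Backelin functor is not even defined on it).

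The good news is that only a one-sided containment is needed, and it follows from the observation you already make, namely that $\Gamma_\zeta(M)$ is a submodule of $\underline{M}=\prod_{\mu}M_\mu$, the product running over the $\h$-weights of $M$. Each factor $M_\mu$ is an $\ufh$-eigenspace with eigenvalue $\mu|_{\ufh}$, so every $\ufh$-eigenvector in $\underline{M}$ has eigenvalue of the form $\mu|_{\ufh}$ for some weight $\mu$ of $M$. By the first part of Proposition~\ref{prop:gammaq}, $\Gamma_\zeta(M)$ is a direct sum of finite-dimensional $\ufk$-modules, hence $\ufh$-semisimple, so all of its $\ufh$-weights are among the $\mu|_{\ufh}$; these are polynomial because $M\in\mc O_n^+$, and therefore every irreducible $\ufk$-constituent of $\Gamma_\zeta(M)$ is a polynomial representation. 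Replacing your overclaim by this containment (or by the paper's appeal to the double-dual construction in the proof of Lemma~3.2 of \cite{Ba97}) repairs the proof; the rest of your argument is fine.
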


\begin{proof}
    By Proposition \ref{prop:gammaq} if $M$ is a direct sum of finite-dimensional irreducible $\gl(n)$-modules, then so is $\Gamma^+_\zeta(M)$. The double dual construction in the proof of \cite[Lemma~ 3.2]{Ba97} also shows that if $M$ is polynomial, then so is the image under the Backelin functor.
\end{proof}

\subsection{Properly stratified categories $\mc O_n^{\zeta\text{-pres}+}$ and $\ov{\mc O}_n^{\zeta\text{-pres}+}$}

We shall now apply the results in \S\ref{sec:Ozeta:para} to our setting of $\g_n$- and $\sg_n$-modules. We let $\mc O_{n,\Z}\subset \mc O_n$ (and respectively, $\ov{\mc O}_{n,\Z}\subset \ov{\mc O}_n$ ) denote the full subcategory of integral weight $\g_n$-modules (and respectively, $\sg_n$-modules).

The set $\Lambda$ of integral weights will be denoted by $\Lambda_n$ and $\Lambda(\zeta)^{\mf q}$ from \eqref{eq:Lzetaq} will be denoted by $\Lambda(\zeta)^{\mf q}_n$ in our current setting for $\g_n$ and $\sg_n$. Denote further
\begin{align}
\label{L+zeta}
\begin{split}
    \Lambda_n^+ &:=\{\la\in \Lambda_n \mid \la|_{\ufh}\text{ is polynomial}\},
    \\
    \Lambda(\zeta)_n^+&:=\{\la\in \Lambda(\zeta)^{\mf q}_n \mid \la|_{\ufh}\text{ is polynomial}\}.
\end{split}
\end{align}

Recall from \S\ref{sec:cat:qzeta} the parabolic cokernel categories of $\g_n$- and $\sg_n$-modules, which we shall denote by $\mc O_n^{\mf q,\zeta\text{-pres}}$ and $\ov{\mc O}_n^{\ov{\mf q},\zeta\text{-pres}}$, respectively. We define subcategories
\begin{align*}
    \mc O_n^{\zeta\text{-pres}+}\subseteq \mc O_n^{\mf q,\zeta\text{-pres}}\quad\text{and}\quad \ov{\mc O}_n^{\zeta\text{-pres}+}\subseteq \ov{\mc O}_n^{\ov{\mf q},\zeta\text{-pres}}
\end{align*}
consisting of modules $M$ in the respective categories on which the $\gl(n)$-action is polynomial.

The same arguments in \S\ref{sec:cat:qzeta} gives us equivalences of categories
\begin{align*}
    \mc O_n^{\zeta\text{-pres}+} \cong \mc O^+_{n,\Z}/\mc I_\zeta^+,
    \qquad
    \ov{\mc O}_n^{\zeta\text{-pres}+} \cong \ov{\mc O}^+_{n,\Z}/\ov{\mc I}_\zeta^+,
\end{align*}
where $\mc O^+_{n,\Z}/\mc I_\zeta^+$ and $\ov{\mc O}^+_{n,\Z}/\ov{\mc I}_\zeta^+$ denote the Serre quotient categories by the respective Serre subcategories $\mc I_\zeta^+=\mc I^{\mf q}_\zeta\cap\mc O_{n,\Z}^+$ and $\ov{\mc I}_\zeta^+=\ov{\mc I}^{\ov{\mf q}}_\zeta\cap\ov{\mc O}_{n,\Z}^+$.

Furthermore, the arguments in \S\ref{sec:Ozeta:para} show that $\mc O_n^{\zeta\text{-pres}+}$ and $\ov{\mc O}_n^{\zeta\text{-pres}+}$
are both properly stratified, with standard, proper standard, projective and simple objects given by the same $\Delta(\la)$, $\blacktriangle(\la)$, $P(\la)$ and $S(\la)$ as in \S\ref{sec:Ozeta:para}, now with $\la\in\Lambda(\zeta)_n^+$ in \eqref{L+zeta}. To continue to distinguish between $\g_n$- and $\sg_n$-modules, we shall denote the $\sg_n$-counterparts by $\ov{\Delta}(\la)$, $\ov{\blacktriangle}(\la)$, $\ov{P}(\la)$ and $\ov{S}(\la)$ accordingly. We remark that the existence of projective covers in these categories in the limit $n\to\infty$ is established in \cite[Theorem 3.5]{CL20} and \cite[Theorem 3.12]{Le20}. We summarize the above discussion in the following.

\begin{prop}\label{prop:para:strat}
    For $n\leq \infty$, the categories $\mc O^{\zeta\text{-pres}+}_n$ and $\ov{\mc O}^{\zeta\text{-pres}+}_n$ are categories with properly stratified structures with indecomposable projective, standard, proper standard, and simple objects described above. Furthermore, for $\la,\mu\in\Lambda(\zeta)_n^+$, we have the following BGG-type reciprocity:
    \begin{align*}
        (P(\la):\Delta(\mu) = [\blacktriangle(\mu):S(\la)],\\
        (\ov{P}(\la):\ov{\Delta}(\mu) = [\ov{\blacktriangle}(\mu):\ov{S}(\la)].
    \end{align*}
\end{prop}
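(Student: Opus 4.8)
The plan is to obtain Proposition~\ref{prop:para:strat} as a direct instantiation of the general results of \S\ref{sec:para:coker}, once we check that the polynomiality condition defining the ``$+$'' subcategories is compatible with the constructions there. The only thing genuinely new compared to Propositions~\ref{Opres=quot:gen}, \ref{prop:para:strat:g} and \ref{prop:para:stratified} is the restriction to $\gl(n)$-polynomial modules; everything else, including the case $n<\infty$, is already covered because $\g_n$ is a Lie algebra and $\sg_n$ is a basic classical Lie superalgebra, so \S\ref{sec:para:coker} applies verbatim.

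First I would verify that the parabolic cokernel category is stable under the polynomiality truncation, that is, that for $M\in\mc O^{\mf q,\zeta\text{-pres}}_n$ the largest $\gl(n)$-polynomial subquotient again lies in $\mc O^{\zeta\text{-pres}+}_n$. This follows because a $\zeta$-admissible projective $P(\la)$ with $\la\in\Lambda(\zeta)^+_n$ is already $\gl(n)$-polynomial (its $\ufk$-constituents are the polynomial $L([\fk,\fk];\mu)$ appearing in a $\Delta$-flag), and the full subcategory of $\gl(n)$-polynomial objects is a Serre subcategory closed under the relevant direct sums; hence a two-step presentation $Q\to P\to M\to 0$ by $\zeta$-admissible projectives with $\la\in\Lambda(\zeta)^+_n$ is exactly the same data as a presentation in $\mc O^{+}_{n,\Z}$ by projectives of those highest weights. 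This gives the equivalences $\mc O^{\zeta\text{-pres}+}_n\cong\mc O^+_{n,\Z}/\mc I^+_\zeta$ and its bar-analogue by restricting the argument of \cite[Lemma~12]{CCM23} (i.e.\ the proof of Proposition~\ref{Opres=quot:gen}) to the polynomial subcategories, using $\mc I^+_\zeta=\mc I^{\mf q}_\zeta\cap\mc O^+_{n,\Z}$.

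Next, the properly stratified structure: I would simply transport Proposition~\ref{prop:para:strat:g} along this equivalence. The indecomposable projectives $P(\la)$, the standard objects $\Delta(\la)=\text{Ind}^{\g_n}_{\mf P}P(\mf L,\la)$, the proper standard objects $\blacktriangle(\la)$ and the simples $S(\la)$ with $\la\in\Lambda(\zeta)^{\mf q}_n$ all happen to be $\gl(n)$-polynomial precisely when $\la\in\Lambda(\zeta)^+_n$ (because $\la|_{\ufh}$ polynomial forces the $L([\fk,\fk];\la|_{\h\cap[\fk,\fk]})$-factor, and hence all $\ufk$-constituents, to be polynomial), so they belong to $\mc O^{\zeta\text{-pres}+}_n$, and the $\Delta$- and $\blacktriangle$-filtrations of $P(\la)$ and $\Delta(\la)$ from Proposition~\ref{prop:para:strat:g} stay inside the polynomial subcategory. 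Thus the axioms of a properly stratified category are inherited, and the BGG-type reciprocity $(P(\la):\Delta(\mu))=[\blacktriangle(\mu):S(\la)]$ for $\la,\mu\in\Lambda(\zeta)^+_n$ is just the restriction of the identity in Proposition~\ref{prop:para:strat:g}. The $\sg_n$-statement is identical with everything barred.

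The one point requiring real care, and the main obstacle, is the limit $n=\infty$: one must know that $\mc O^{\zeta\text{-pres}+}_\infty$ and $\ov{\mc O}^{\zeta\text{-pres}+}_\infty$ actually have enough projectives, since the stratified formalism and the reciprocity formula presuppose indecomposable projective covers $P(\la)$. For finite $n$ this is automatic, but at $n=\infty$ the BGG category itself does not have projectives and one works with the polynomial truncation; here I would invoke \cite[Theorem~3.5]{CL20} and \cite[Theorem~3.12]{Le20}, which establish the existence of projective covers in the relevant infinite-rank polynomial categories, together with the compatibility of the truncation functors $\text{Tr}^n_m$ (Propositions~\ref{prop:trun:O}--\ref{prop:trun:SO}) with the modules $P(\la),\Delta(\la),\blacktriangle(\la),S(\la)$, so that these objects and their filtrations are obtained as (inverse) limits of their finite-$n$ counterparts. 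Granting this, the properly stratified structure and the reciprocity at $n=\infty$ follow by passing to the limit from the finite-$n$ case.
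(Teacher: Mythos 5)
Your proposal is correct and follows essentially the same route as the paper: the paper likewise obtains the proposition by restricting the arguments of Propositions~\ref{Opres=quot:gen} and \ref{prop:para:strat:g} to the $\gl(n)$-polynomial subcategories, noting that the relevant projective, standard, proper standard and simple objects for $\la\in\Lambda(\zeta)^+_n$ all lie there, and for $n=\infty$ it invokes exactly \cite[Theorem~3.5]{CL20} and \cite[Theorem~3.12]{Le20} for the existence of projective covers together with compatibility under truncation. Your identification of the $n=\infty$ projectivity issue as the only genuinely delicate point matches the paper's treatment.
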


We denote the indecomposable tilting module corresponding to $\la\in\Lambda(\zeta)_n^+$ in $\mc O^{\zeta\text{-pres}+}_n$ by $T(\la)$. Denote the indecomposable tilting module in $\mc O_{n,\Z}^+$ of highest weight $\la\in \Lambda^+$ by $T^{\mc O_\Z^+}(\la)$. Similarly, we have the notations $\ov{T}(\la)$ and $\ov{T}^{\ov{\mc O}_\Z^+}(\la)$.

\begin{prop} Let $\la,\mu\in\Lambda(\zeta)_n^+$. For $n\le\infty$, we have
    \begin{align*}
        T(\la)\cong T^{\mc O_\Z^+}(w_0^{\fl}\cdot\la)\quad\text{and}\quad \ov{T}(\la)\cong \ov{T}^{\ov{\mc O}_\Z^+}(w_0^{\fl}\cdot\la).
    \end{align*}
Furthermore, for $n<\infty$, the following Ringel-type dualities hold:
\begin{align*}
        (T(\la):\Delta(\mu)) = [\blacktriangle(-w_0^{\fl} w_0^n\cdot\mu-\rho):S(-w_0^{\fl} w_0^n\cdot\la-\rho)],\\
        (\ov{T}(\la):\ov{\Delta}(\mu)) = [\ov{\blacktriangle}(-w_0^{\fl} w_0^n\cdot\mu-\rho):\ov{S}(-w_0^{\fl} w_0^n\cdot\la-\rho)],
    \end{align*}
    where $w_0^n$ is the longest element in Weyl group of the subalgebra $\gl(n)$.
\end{prop}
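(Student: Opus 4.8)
The plan is to reduce everything to the finite-rank, $\mf q$-parabolic statements already assembled in Section~\ref{sec:para:coker}, and then pass to the limit $n=\infty$ using super duality. Concretely, for the identifications $T(\la)\cong T^{\mc O_\Z^+}(w_0^{\fl}\cdot\la)$ and $\ov{T}(\la)\cong \ov{T}^{\ov{\mc O}_\Z^+}(w_0^{\fl}\cdot\la)$, the first step is to observe that for each finite $n$ these are special cases of Proposition~\ref{prop:para:stratified}: the category $\mc O_n^{\zeta\text{-pres}+}$ is the full subcategory of $\mc O_n^{\mf q,\zeta\text{-pres}}$ cut out by the polynomial condition on $\ufk$, and the tilting modules $T(\la)$, for $\la\in\Lambda(\zeta)_n^+$, are precisely the indecomposable tiltings of $\mc O_n^{\mf q,\zeta\text{-pres}}$ supported on polynomial weights; similarly on the super side. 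So Proposition~\ref{prop:para:stratified} (applied to $\g=\g_n$ and to $\g=\sg_n$, which is a basic classical Lie superalgebra of the allowed type) gives the tilting identifications and the Ringel-type dualities for every finite $n$, with $w_0^{\mf k}$ there equal to $w_0^n$, the longest element of the Weyl group of $\gl(n)=\ufk$. The Ringel-duality formulas in the statement are then literally the content of the second half of Proposition~\ref{prop:para:stratified} once one notes $w_0^{\mf k}=w_0^n$ and that $(T^{\mc O_{n,\Z}^{\mf q}}(w_0^{\fl}\cdot\la):N(\mu))$ is $W_{\fl}$-invariant in $\mu$.

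The second step is the limit $n\to\infty$. Here I would use the truncation functors $\text{Tr}^n_m$ and $\ov{\text{Tr}}^n_m$ of Propositions~\ref{prop:trun:O} and~\ref{prop:trun:SO}, which send $N,L,T$ to the corresponding objects (or to $0$), together with the fact, recorded after Proposition~\ref{prop:para:strat}, that projective covers exist in $\mc O^{\zeta\text{-pres}+}$ and $\ov{\mc O}^{\zeta\text{-pres}+}$ at $n=\infty$ by \cite[Theorem 3.5]{CL20} and \cite[Theorem 3.12]{Le20}. The point is that a $\Delta$- and $\nabla$-flag of $T_n(\la)$ for large $n$ truncates compatibly, so $T(\la)\in\mc O^{\zeta\text{-pres}+}$ is tilting with highest weight $\la$, and by the standard uniqueness of indecomposable tiltings it must equal $T^{\mc O_\Z^+}(w_0^{\fl}\cdot\la)$; one checks the latter lies in $\mc O^{\zeta\text{-pres}+}$ exactly as in the finite case, since as an $\mf L$-module it remains a sum of self-dual projectives (or, alternatively, one transports the statement through super duality from the $\g_\infty$ side, where $\mc O^+\cong\ov{\mc O}^+$ matches $T(\la)\leftrightarrow\ov T(\la^\natural)$ by Theorem~\ref{thm:SD} and matches the Serre subcategories $\mc I_\zeta^+\leftrightarrow\ov{\mc I}_\zeta^+$). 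The Ringel-duality multiplicities, being finite composition-series data that are stable under truncation, pass to $n=\infty$ as well, which is why I state them only for $n<\infty$ but the tilting identifications for all $n\le\infty$.

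I would organize the write-up as: (i) invoke Proposition~\ref{prop:para:stratified} for $\g_n$ and $\sg_n$ to get both assertions at finite $n$; (ii) for $n=\infty$, use truncation compatibility of $\Delta$-, $\nabla$-flags plus existence of projective covers to conclude $T(\la)$ exists and, by uniqueness, equals $T^{\mc O_\Z^+}(w_0^{\fl}\cdot\la)$, and likewise on the super side (or deduce the super side from the Lie-algebra side via Theorem~\ref{thm:SD}). The main obstacle I expect is step (ii): one must check that truncation of a tilting module is tilting and that the highest weight is tracked correctly through $\text{Tr}^n_m$, and—more delicately—that the infinite-rank category $\mc O^{\zeta\text{-pres}+}$ genuinely has enough tilting objects (not merely enough projectives), so that the uniqueness argument applies. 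This is handled by the same reduction-to-$\g_{\bar 0}$ device used in the proof of Proposition~\ref{prop:para:stratified} together with the limit arguments of \cite[\S4.4]{CC24} and \cite{CL20, Le20}, but it is the one point that is not purely formal.
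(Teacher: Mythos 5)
Your proposal matches the paper's proof: the finite-rank case is read off from Proposition~\ref{prop:para:stratified} (with $w_0^{\mf k}=w_0^n$), and the $n=\infty$ case is handled by observing that $T^{\mc O_\Z^+}(w_0^{\fl}\cdot\la)$ truncates to the finite-rank tiltings and that $\Delta$- and $\nabla$-flags are compatible with the truncation functors of Propositions~\ref{prop:trun:O} and~\ref{prop:trun:SO}, so the infinite-rank object is tilting in $\mc O^{\zeta\text{-pres}+}$ and is identified by uniqueness. Your extra remarks (existence of projective covers, the alternative transport of the super side through Theorem~\ref{thm:SD}) are harmless additions to what is essentially the same argument.
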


\begin{proof}
The case $n<\infty$ is a direct consequence of Proposition \ref{prop:para:stratified}. So it remains to prove the first statement for $n=\infty$.

For $n=\infty$, we observe that, by Proposition \ref{prop:trun:O}, the tilting modules of the form $T^{\mc O_\Z^+}(w_0^{\fl}\cdot\la)$ truncate to the corresponding tilting modules of the same highest weight for $n\gg0$ having the same parabolic Verma flag length. Furthermore, since the $\Delta$- and $\nabla$-flags are also compatible under the truncation functors, these tilting modules have $\Delta$- and $\nabla$-flags as well, and thus, they are indeed tilting modules $T(\la)$ for $\mc O^{\zeta\text{-pres}+}$ at $n=\infty$.

Similarly, we have the identity for $\ov{T}(\la)\cong \ov{T}^{\ov{\mc O}_\Z^+}(w_0^{\fl}\cdot\la)$ for $n=\infty$ using Proposition \ref{prop:trun:SO}.
\end{proof}

\subsection{Categories $\calW(\zeta)_n^+$ and $\ov{\calW}(\zeta)_n^+$ of Whittaker modules}

Following \S\ref{sec:strat:Whitt},  we let $\calW(\zeta)_n^+$ and $\ov{\calW}(\zeta)_n^+$ denote the image categories of $\Gamma^+_\zeta:\mc O_{n,\Z}^+\rightarrow\MS(\zeta)_n^+$ and $\ov{\Gamma}^+_\zeta:\ov{\mc O}_{n,\Z}^+\rightarrow\ov{\MS}(\zeta)_n^+$, respectively.

\begin{prop}\label{prop:gamma+g}
    Let $n\le\infty$. The functor ${\Gamma}_\zeta^+:\mc O_{n,\Z}^+\rightarrow \mc W(\zeta)_n^+$ induces an equivalence of categories
    \begin{align*}
        {}'{\Gamma}_\zeta^+:\mc O_{n,\Z}^+/\mc I^+_\zeta\stackrel{\cong}{\longrightarrow} \mc W(\zeta)_n^+.
    \end{align*}
    Furthermore, $\Gamma_\zeta(N(\la)) =N(\la,\zeta)$, for $\la\in \Lambda_n^+$, and
   \begin{align*}
       \Gamma_\zeta\left(L(\la)\right)=
       \begin{cases}
           L(\la,\zeta),\text{ if }\la\in\Lambda(\zeta)_n^+;\\
           0, \text{ otherwise}.
       \end{cases}
   \end{align*}
    Hence $\mc W(\zeta)_n^+$ contains $N(\la,\zeta)$ and the simple Whittaker modules $L(\la,\zeta)$, for $\la\in\Lam(\zeta)_n^+$.
\end{prop}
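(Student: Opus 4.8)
The plan is to follow the proof of Proposition~\ref{prop:gamma:para} almost verbatim, carrying along the extra ``polynomial over $\gl(n)$'' bookkeeping, and for $n<\infty$ to reduce directly to that proposition applied to $\g=\g_n$ (respectively $\g=\sg_n$). For $n<\infty$, Proposition~\ref{prop:gamma:para} gives an equivalence ${}'\Gamma_\zeta^{\mf q}:\mc O_{n,\Z}^{\mf q}/\mc I_\zeta^{\mf q}\stackrel{\cong}{\longrightarrow}\mc W(\zeta)_n^{\mf q}$, where $\mc W(\zeta)_n^{\mf q}$ is the image of $\Gamma_\zeta$ on $\mc O_{n,\Z}^{\mf q}$. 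Since $\gl(n)$ acts semisimply on objects of $\mc O_{n,\Z}^{\mf q}$, polynomiality of the $\ufk=\gl(n)$-action is preserved under subobjects, quotients and extensions, so $\mc O_{n,\Z}^+$ is a Serre subcategory of $\mc O_{n,\Z}^{\mf q}$; together with $\mc I_\zeta^+=\mc I_\zeta^{\mf q}\cap\mc O_{n,\Z}^+$, the compatible-abelian-structure argument used in Proposition~\ref{prop:gamma:para} identifies $\mc O_{n,\Z}^+/\mc I_\zeta^+$ with a full subcategory of $\mc O_{n,\Z}^{\mf q}/\mc I_\zeta^{\mf q}$ on which ${}'\Gamma_\zeta^{\mf q}$ restricts to ${}'\Gamma_\zeta^+$. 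A full and faithful functor restricted to a full subcategory stays full and faithful, and its essential image is $\mc W(\zeta)_n^+$ by the very definition of the latter as the image of $\Gamma_\zeta^+$; hence ${}'\Gamma_\zeta^+$ is an equivalence.

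Unwinding this, the steps are: (i) by Lemma~\ref{lem:gammaq2}, $\Gamma_\zeta$ annihilates every simple $L(\la)$ with $\la$ not $W_\fl$-antidominant, hence by exactness annihilates $\mc I_\zeta^+$, so by the universal property of the Serre quotient (\cite[Corollaires III.1.2 and III.1.3]{Gabriel}) the functor $\Gamma_\zeta^+$ factors as ${}'\Gamma_\zeta^+\circ\pi^+$ through $\mc O_{n,\Z}^+/\mc I_\zeta^+$; (ii) ${}'\Gamma_\zeta^+$ is essentially surjective onto $\mc W(\zeta)_n^+$ because $\pi^+$ is and $\mc W(\zeta)_n^+$ is by definition the image of $\Gamma_\zeta^+$; (iii) fullness and faithfulness are inherited from ${}'\Gamma_\zeta^{\mf q}$ as above. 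For $n=\infty$, the categories $\mc O_{\infty,\Z}^+$ and $\ov{\mc O}^+_{\infty,\Z}$ still admit projective covers by \cite[Theorem 3.5]{CL20} and \cite[Theorem 3.12]{Le20}, so the arguments of Proposition~\ref{prop:gamma:para} and of the underlying references \cite{CCM23,CC24} apply; alternatively one computes $\Hom$-spaces in $\mc O_{\infty,\Z}^+/\mc I_\zeta^+$ and in $\mc W(\zeta)_\infty^+$ as direct limits over the truncation functors of Propositions~\ref{prop:trun:O}--\ref{prop:trun:SO}, thereby deducing full-faithfulness from the finite-rank case.

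For the remaining assertions, Lemma~\ref{lem:gammaq2} already gives $\Gamma_\zeta(N(\la))=N(\la,\zeta)$ for every $\la\in\h^{\ast,+}$, in particular for $\la\in\Lambda_n^+\subseteq\h^{\ast,+}$, and $\Gamma_\zeta(L(\la))$ equals $L(\la,\zeta)$ or $0$ according as $\la$ is or is not $W_\fl$-antidominant. It then remains to match this dichotomy with membership in $\Lambda(\zeta)_n^+$: a polynomial weight $\la|_{\ufh}$ is automatically dominant integral for $\ufk=\gl(n)$, so for $\la\in\Lambda_n^+$ one has $\la\in\Lambda(\zeta)_n^+$ if and only if $\la$ is $W_\fl$-antidominant, by \eqref{eq:Lzetaq} and \eqref{L+zeta}. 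The final sentence follows since $N(\la)$ and $L(\la)$ lie in $\mc O_{n,\Z}^+$ for $\la\in\Lambda(\zeta)_n^+\subseteq\Lambda_n^+$, so their images under $\Gamma_\zeta^+$ lie in $\mc W(\zeta)_n^+$.

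The bulk of the argument is routine repackaging of Proposition~\ref{prop:gamma:para} and Lemma~\ref{lem:gammaq2}; the one point demanding genuine care is the case $n=\infty$, where the full-faithfulness of ${}'\Gamma_\zeta$ on the Serre quotient---the heart of Proposition~\ref{prop:gamma:para}, established in \cite{CCM23,CC24} for a finite-dimensional basic classical Lie superalgebra---must either be re-derived from the existence of projective covers in the infinite-rank categories (\cite{CL20,Le20}), after which the same formal argument applies, or transported from finite rank via compatibility of the Backelin functor, the Serre subcategories $\mc I_\zeta^+$, and the quotient functors with the truncation functors. Once this is settled, everything else is the bookkeeping sketched above.
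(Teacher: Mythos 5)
Your proposal is correct and follows essentially the same route as the paper: for $n<\infty$ the statement is read off from Proposition~\ref{prop:gamma:para}, and for $n=\infty$ the equivalence is transported from finite rank via the truncation functors together with the existence of projective covers from \cite{CL20, Le20}. One caution on your $n=\infty$ sketch: Hom-spaces between \emph{arbitrary} objects are not computed as limits over truncation, so the paper instead restricts to projectives $P,Q$ in $\mc O^{\zeta\text{-pres}+}$ (which have finite standard flags, so $\Hom(P,Q)\cong\Hom(\text{Tr}_nP,\text{Tr}_nQ)$ for $n\gg0$ by \cite[Theorem 3.2]{CL20}), checks that $\Gamma_\zeta$ preserves these Hom-spaces, and then concludes by identifying both $\mc O^{\zeta\text{-pres}+}$ and $\mc W(\zeta)^+$ as the cokernel categories of their projectives via \cite[Proposition 5.10]{BG80}; your argument needs this restriction to projectives and the final cokernel-category step to be complete.
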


\begin{proof}
    The case of $n<\infty$ is a direct consequence of Proposition \ref{prop:gamma:para}

   Now suppose that $n=\infty$. The same type of arguments as Proposition \ref{prop:gammaq} also shows that the Backelin functor sends the parabolic Verma and irreducible modules in $\mc O^+$ to parabolic standard Whittaker and irreducible modules, respectively, in the case as well. So, it remains to prove the first statement for $n=\infty$.

   Suppose that $P,Q$ are two projective modules in $\mc O^{\zeta\text{-pres}+}\subseteq \mc O_\Z^+$, which exist and have finite (parabolic) Verma flags by \cite[Theorem 3.5]{CL20} and \cite[Theorem 3.12]{Le20}. That is, both $P,Q$ are direct sums of indecomposable projective covers of irreducible modules of $W_{\fl}$-antidominant highest weights. Furthermore, by \cite[Theorem 3.2]{CL20} for $n\gg 0$, we have
\begin{align}\label{eq:proj=projn}
    \text{Hom}_{\mc O_\Z^+}( P, Q)\cong \text{Hom}_{\mc O^+_{n,\Z}}(\text{Tr}_n P,\text{Tr}_n Q).
\end{align}

Note that $\Gamma_\zeta P$ and $\Gamma_\zeta Q$ are projective modules in $\mc W(\zeta)^+$ and have finite flags of standard Whittaker modules, since $\Gamma_\zeta$ is exact and sends parabolic Verma modules to the corresponding standard Whittaker modules. It follows from \eqref{eq:proj=projn} that $\text{Hom}_{W(\zeta)^+}(\Gamma_\zeta P,\Gamma_\zeta Q)<\infty$. Since $\Gamma_\zeta$ is compatible with $\text{Tr}_n$, we have for $n\gg 0$
\begin{align*}
   \text{Hom}_{\mc W(\zeta)^+}(\Gamma_\zeta P,\Gamma_\zeta Q) &\cong \text{Hom}_{\mc W(\zeta)_n^+}(\Gamma_\zeta \text{Tr}_n(P),\Gamma_\zeta \text{Tr}_n(Q))\\ &\cong \text{Hom}_{\mc O_{n}^{\zeta\text{-pres}+}}(\text{Tr}_n(P), \text{Tr}_n(Q))\\ &\cong
   \text{Hom}_{\mc O^+_{n,\Z}}( \text{Tr}_n(P), \text{Tr}_n(Q))\\&\cong
   \text{Hom}_{\mc O^+_\Z}( P, Q) \cong \text{Hom}_{\mc O^{\zeta\text{-pres}+}}( P, Q)
\end{align*}
It follows, e.g., from \cite[Proposition 5.10]{BG80}, $\mc O^{\zeta\text{-pres}+}$ is equivalent to the cokernel category with projective objects $\Gamma_\zeta P$, where $P\in \mc O^{\zeta\text{-pres}+}$. However, this category is $\mc W(\zeta)^+$. This completes the proof.
\end{proof}

Analogously, we have the following super counterpart.

\begin{prop}\label{prop:gamma+sg} Let $n\le\infty$.
The functor $\ov{\Gamma}_\zeta^+:\ov{\mc O}_{n,\Z}^+\rightarrow \ov{\mc W}(\zeta)_n^+$ induces an equivalence of categories
    \begin{align*}
        {}'{\ov{\Gamma}}_\zeta^+:\ov{\mc O}_{n,\Z}^+/\ov{\mc I}^+_\zeta\stackrel{\cong}{\longrightarrow} \ov{\mc W}(\zeta)_n^+.
    \end{align*}
    Furthermore, $\ov{\Gamma}_\zeta\left(\ov{N}(\la^\natural)\right)=\ov{N}(\la^\natural,\zeta)$, for $\la\in \Lambda_n^+$, and
   \begin{align*}
       \ov{\Gamma}_\zeta\left(\ov{L}(\la^\natural)\right)=
       \begin{cases}
           \ov{L}(\la^\natural,\zeta),\text{ if }\la\in\Lambda(\zeta)_n^+;\\
           0, \text{ otherwise}.
       \end{cases}
   \end{align*}Hence $\ov{\mc W}(\zeta)^+$ contains $\ov{N}(\la^\natural,\zeta)$ and simple Whittaker module $\ov{L}(\la^\natural,\zeta)$, for $\la\in\Lam(\zeta)_n^+$.
\end{prop}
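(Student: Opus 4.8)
The plan is to run the proof of Proposition~\ref{prop:gamma+g} line for line, with $\g_n$ replaced by the basic classical Lie superalgebra $\sg_n$, the Backelin functor $\Gamma_\zeta$ replaced by $\ov{\Gamma}_\zeta$, the truncation functors $\text{Tr}^n_m$ replaced by $\ov{\text{Tr}}^n_m$, and Proposition~\ref{prop:trun:O} replaced by its superanalogue Proposition~\ref{prop:trun:SO}. The decoration $(\cdot)^\natural$ plays no role in the argument: it only alters a weight on the tail $\gl(n)$, so $\ov{N}(\la^\natural)$ runs over all parabolic Verma modules of $\ov{\mc O}^+_n$ as $\la$ runs over $\Lambda_n^+$, and $W_{\fl}$-antidominance of $\la^\natural$ is equivalent to that of $\la$ because $\fl$ lies inside the head subalgebra $\mf H$; it is carried along only to align the statement with Theorem~\ref{thm:SD}.

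For finite $n$, the category $\ov{\mc O}^+_{n,\Z}$ is exactly of the form handled by Proposition~\ref{prop:gamma:para} (applied to $\sg_n$, the parabolic $\ov{\mf q}$, and the character $\zeta$), so that proposition gives at once the equivalence ${}'{\ov{\Gamma}}_\zeta^+:\ov{\mc O}^+_{n,\Z}/\ov{\mc I}^+_\zeta \stackrel{\cong}{\longrightarrow} \ov{\mc W}(\zeta)^+_n$, the identification of $\ov{\Gamma}_\zeta(\ov{N}(\mu))$ and $\ov{\Gamma}_\zeta(\ov{L}(\mu))$, and the containment of $\ov{N}(\mu,\zeta)$ and $\ov{L}(\mu,\zeta)$ in $\ov{\mc W}(\zeta)^+_n$. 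For $n=\infty$, the computation of the images of parabolic Verma and irreducible modules follows the same type of argument as in Proposition~\ref{prop:gammaq}: $\ov{\Gamma}_\zeta$ commutes with restriction to a Levi and with tensoring by finite-dimensional modules, $\zeta$ is trivial on the simple roots of the tail algebra, and polynomiality of the image is preserved through the double-dual construction of \cite[Lemma~ 3.2]{Ba97}, as recorded in Lemma~\ref{lem:gammaq2}. The one substantial point is the equivalence at $n=\infty$. For projective objects $\ov{P},\ov{Q}$ of $\ov{\mc O}^{\zeta\text{-pres}+}$, which exist and carry finite parabolic Verma flags by \cite[Theorem 3.12]{Le20}, I would invoke the super Hom-stability isomorphism $\Hom_{\ov{\mc O}^+_\Z}(\ov{P},\ov{Q})\cong\Hom_{\ov{\mc O}^+_{n,\Z}}(\ov{\text{Tr}}_n\ov{P},\ov{\text{Tr}}_n\ov{Q})$ for $n\gg 0$; then, since $\ov{\Gamma}_\zeta$ is exact, sends parabolic Verma modules to standard Whittaker modules, and commutes with $\ov{\text{Tr}}_n$, the same chain of isomorphisms as in the proof of Proposition~\ref{prop:gamma+g} gives
\[
\Hom_{\ov{\mc W}(\zeta)^+}(\ov{\Gamma}_\zeta\ov{P},\ov{\Gamma}_\zeta\ov{Q})\cong\Hom_{\ov{\mc O}^{\zeta\text{-pres}+}}(\ov{P},\ov{Q}).
\]
By \cite[Proposition 5.10]{BG80} this identifies $\ov{\mc O}^{\zeta\text{-pres}+}$ with the cokernel category whose projective objects are the $\ov{\Gamma}_\zeta\ov{P}$, $\ov{P}\in\ov{\mc O}^{\zeta\text{-pres}+}$, and that category is exactly $\ov{\mc W}(\zeta)^+$; composing with the equivalence $\ov{\mc O}^{\zeta\text{-pres}+}\cong\ov{\mc O}^+_\Z/\ov{\mc I}^+_\zeta$ from \S\ref{sec:cat:qzeta} finishes the argument.

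The main obstacle is not conceptual but a matter of confirming that the two infinite-rank inputs of the Lie-algebra proof have honest superanalogues: the existence of projective covers with finite parabolic Verma flags in $\ov{\mc O}^+_{n,\Z}$ at $n=\infty$, and the stabilization of $\Hom$-spaces under $\ov{\text{Tr}}_n$ for $n\gg 0$. The first is \cite[Theorem 3.12]{Le20}; the second is the $\sg_n$-version of \cite[Theorem 3.2]{CL20}, and these are precisely the points where one invokes \cite{Le20} in place of \cite{CL20}. Everything downstream --- exactness of $\ov{\Gamma}_\zeta$, its compatibility with $\ov{\text{Tr}}_n$, and the cokernel-category recognition via \cite[Proposition 5.10]{BG80} --- is formal and identical to the $\g_n$ case.
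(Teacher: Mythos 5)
Your proposal is correct and matches the paper's intent exactly: the paper states Proposition~\ref{prop:gamma+sg} with only the remark ``Analogously, we have the following super counterpart,'' i.e., it is proved by running the argument of Proposition~\ref{prop:gamma+g} with $\sg_n$, $\ov{\Gamma}_\zeta$, $\ov{\text{Tr}}^n_m$, Proposition~\ref{prop:trun:SO}, and \cite[Theorem 3.12]{Le20} in place of their non-super counterparts, which is precisely what you do. Your additional observations --- that $(\cdot)^\natural$ is immaterial to the argument and that the only substantive inputs needing super analogues are the existence of projectives with finite parabolic Verma flags and Hom-stabilization under truncation --- are accurate and correctly sourced.
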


\begin{thm} [Super duality for Whittaker modules]
\label{thm:SD:MMS}
    There exists an equivalence of properly stratified categories
    \[
    \mc W(\zeta)^+ \stackrel{\cong}{\longrightarrow} \ov{\mc W}(\zeta)^+,
    \]
    which maps $N(\zeta,\la)$ and $L(\zeta,\la)$ to $\ov{N}(\la^\natural,\zeta)$ and $\ov{L}(\la^\natural,\zeta)$, respectively, for $\la\in\Lambda(\zeta)^+$.
\end{thm}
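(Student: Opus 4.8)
The plan is to realize the desired equivalence by splicing together three equivalences, two of which are already in hand. Taking $n=\infty$ in Propositions~\ref{prop:gamma+g} and \ref{prop:gamma+sg}, the Backelin functors give equivalences $\mc O^+_\Z/\mc I^+_\zeta\cong\mc W(\zeta)^+$ and $\ov{\mc O}^+_\Z/\ov{\mc I}^+_\zeta\cong\ov{\mc W}(\zeta)^+$, carrying the class of $N(\la)$ to $N(\la,\zeta)$, the class of $\ov N(\la^\natural)$ to $\ov N(\la^\natural,\zeta)$, and likewise for the simple objects. So it suffices to produce an equivalence $\mc O^+_\Z/\mc I^+_\zeta\cong\ov{\mc O}^+_\Z/\ov{\mc I}^+_\zeta$ out of super duality (Theorem~\ref{thm:SD}) matching the class of $N(\la)$ with the class of $\ov N(\la^\natural)$, and then to compose.

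First I would restrict the super duality equivalence $\mc O^+\cong\ov{\mc O}^+$ to the integral-weight subcategories: the bijection $\la\mapsto\la^\natural$ merely conjugates the partition $\la|_{\ufh}$ and leaves all remaining coordinates of $\la$ untouched, so it preserves integrality and Theorem~\ref{thm:SD} restricts to an equivalence $\mc O^+_\Z\cong\ov{\mc O}^+_\Z$ still matching $N(\la)\leftrightarrow\ov N(\la^\natural)$ and $L(\la)\leftrightarrow\ov L(\la^\natural)$. Next I would check that this equivalence carries the Serre subcategory $\mc I^+_\zeta$ onto $\ov{\mc I}^+_\zeta$. An equivalence sends the Serre subcategory generated by a family of simples to the one generated by the corresponding simples, and $\mc I^+_\zeta$ (resp.\ $\ov{\mc I}^+_\zeta$) is generated by the $L(\la)$ (resp.\ $\ov L(\la)$) with $\la\in\Lambda^+\setminus\Lambda(\zeta)^+$; so the claim reduces to showing that $\natural$ restricts to a bijection of $\Lambda(\zeta)^+$. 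Polynomiality of $\la|_{\ufh}$ is preserved because the conjugate of a partition is again a partition, and $W_{\fl}$-antidominance depends only on the head coordinates of $\la$ --- the simple roots generating $W_{\fl}$ and the character $\zeta$ all live on the head diagram $\mf H$, which is common to $\g_\infty$ and $\sg_\infty$ and is fixed by $\natural$ --- so it too is preserved. Hence $\natural(\Lambda(\zeta)^+)=\Lambda(\zeta)^+$, super duality induces an equivalence of Serre quotients, and composing the three equivalences yields $\mc W(\zeta)^+\cong\ov{\mc W}(\zeta)^+$.

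Tracing $N(\la,\zeta)$ backwards through ${}'\Gamma_\zeta^+$ to the class of $N(\la)$, across super duality to the class of $\ov N(\la^\natural)$, and forward through ${}'\ov\Gamma_\zeta^+$ to $\ov N(\la^\natural,\zeta)$ gives the asserted correspondence of standard Whittaker modules; the same chase with $L(\la)$ gives $L(\la,\zeta)\mapsto\ov L(\la^\natural,\zeta)$ for $\la\in\Lambda(\zeta)^+$. To see the composite is an equivalence of \emph{properly stratified} categories, I would pass through the identifications $\mc O^{\zeta\text{-pres}+}\cong\mc O^+_\Z/\mc I^+_\zeta$ and $\ov{\mc O}^{\zeta\text{-pres}+}\cong\ov{\mc O}^+_\Z/\ov{\mc I}^+_\zeta$ of \S\ref{sec:Ozeta:para} (valid for $n=\infty$ by Proposition~\ref{prop:para:strat}): under these the middle equivalence is the one induced by super duality, which is exact, matches the simples $S(\la)\leftrightarrow\ov S(\la^\natural)$ and hence their projective covers $P(\la)\leftrightarrow\ov P(\la^\natural)$, and respects the labeling poset on $\Lambda(\zeta)^+$ via $\natural$ (super duality being an equivalence of highest weight categories, cf.\ \cite{CW12}); an exact equivalence of properly stratified categories with these properties necessarily matches the standard and proper standard objects $\Delta(\la)\leftrightarrow\ov\Delta(\la^\natural)$ and $\blacktriangle(\la)\leftrightarrow\ov\blacktriangle(\la^\natural)$ as well, and transporting this through the Backelin equivalences finishes the argument.

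The step I expect to be the main obstacle is the verification that $\natural$ stabilizes $\Lambda(\zeta)^+$ and respects the stratification order --- concretely, that $W_{\fl}$-antidominance and the linkage order are genuinely insensitive to conjugating the tail partition. This is morally immediate from the head-versus-tail structure of the Dynkin diagrams (so that $\fl$ and $\zeta$ sit inside the common head $\mf H$ while $\natural$ moves only the tail coordinates carrying the $\gl(n)$-partition), but making it precise requires a careful comparison of the Weyl vectors of $\g_\infty$ and $\sg_\infty$ on the head coordinates together with the polynomiality condition on the tail; everything else in the argument is a formal composition of equivalences already established above.
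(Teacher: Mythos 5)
Your proposal is correct and follows essentially the same route as the paper: compose the two Backelin-induced equivalences of Propositions~\ref{prop:gamma+g} and \ref{prop:gamma+sg} with the equivalence of Serre quotients induced by super duality (Theorem~\ref{thm:SD}), after checking that $\natural$ matches the simples generating $\mc I_\zeta^+$ and $\ov{\mc I}_\zeta^+$. Your write-up is merely more explicit than the paper's about why $\natural$ stabilizes $\Lambda(\zeta)^+$ and about transporting the properly stratified structure through the cokernel-category identifications, but no new idea or different decomposition is involved.
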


\begin{proof}
    By Theorem \ref{thm:SD} we have an equivalence of categories between the highest weight categories $\mc O_\Z^+$ and $\ov{\mc O}_\Z^+$ under which $N(\la)$ and $L(\la)$ correspond to $\ov{N}(\la^\natural)$ and $\ov{L}(\la^\natural)$, respectively. Now the simple objects in the Serre subcategory $\mc I_\zeta^+$ correspond to the simple objects in the Serre subcategory $\ov{\mc I}^+_\zeta$. Thus, we have $\mc O_\Z^+/\mc I_\zeta^+\cong \ov{\mc O}_\Z^+/\ov{\mc I}_\zeta^+$.

    Now, by Propositions \ref{prop:gamma+g} and \ref{prop:gamma+sg} the categories $\mc W(\zeta)^+$ (and respectively, $\ov{\mc W}(\zeta)^+$) of MMS Whittaker modules are equivalent, respectively, to the Serre quotient category $\mc O_\Z^+/\mc I^+_\zeta$ (and respectively, $\ov{\mc O}_\Z^+/\ov{\mc I}^+_\zeta$). Summarizing we have the following commutative diagrams:
    \[
\xymatrix{
\mc O_\Z^+
\ar[r]
 \ar[d]^{\cong}
 & \mc O_\Z^+/\mc I_\zeta^+
 \ar[r]^{\cong}
\ar[d]^{\cong}
  & \mc W(\zeta)^+
 \ar[d]
 \\
\ov{\mc O}_\Z^+
\ar[r]
& \ov{\mc O}_\Z^+/\ov{\mc I}_\zeta^+
\ar[r]^{\cong}
& \ov{\mc W}(\zeta)^+
}
\]
Thus, the equivalence
    $\mc W(\zeta)^+\cong\ov{\mc W}(\zeta)^+$ follows.

    The correspondence of the standard and simple objects follows from the fact that the quotient functors are isomorphic to the respective Backelin functors and according to Propositions \ref{prop:gamma+g}--\ref{prop:gamma+sg} the corresponding Backelin functors send the parabolic Verma modules and simple modules in $\mc O_\Z^+$ and $\ov{\mc O}_\Z^+$ to the standard Whittaker and simple modules (if nonzero) in $\mc W(\zeta)^+$ and $\ov{\mc W}(\zeta)^+$, respectively.
\end{proof}

\begin{rem}
    In case when $\zeta=0$, Theorem \ref{thm:SD:MMS} reduces to the super duality between parabolic BGG categories modules over $\g_\infty$ and $\sg_\infty$ (see \cite{CWZ08, CW08, CL10} for type $A$ and \cite{CLW11} for type BCD; also cf.~\cite[Chapter~ 6]{CW12}). Our proof of  Theorem \ref{thm:SD:MMS} uses the super duality in this special case when $\zeta=0$, and so does not yield a new proof of this speical case.
\end{rem}

\subsection{Categorification of Fock spaces}

Let us be specific by setting $\g_n= \gl(m+n)$ and $\sg_n= \gl(m|n)$, i.e., the head diagram \framebox{$\mf H^{\mf a}$} is the Dynkin diagram of $\gl(m)$.
%We denote by $\mc O^+_{m|\underline{n}}$ (respectively, $\mc O^+_{m|\underline{n}}$) the BGG category of modules over $\glmn$ (respectively, $\gl(m+n)$) of integer weights which are polynomial representations when restricted to $\gl(n)$. This is a more specific version of the parabolic BGG categories $\mc O^+_n$ and $\ov{\mc O}^+_n$ discussed in the prior subsections.
Denote by $[{\mc O}^+_{n,\Z}]$ and $[\ov{\mc O}^+_{n,\Z}]$ the Grothendieck groups of the categories ${\mc O}^+_{n,\Z}$ and $\ov{\mc O}^+_{n,\Z}$, respectively.
It was shown in \cite{CLW15} (as a parabolic version of Brundan-Kazhdan-Lusztig conjecture \cite{Br03}) that there exists a $U(\gl_\infty)_\Z$-module isomorphism
\begin{align}
\label{eq:VWn}
\begin{split}
[{\mc O}^+_{n,\Z}] &\cong \VV^{\otimes m} \otimes \wedge^{n}\VV,
\\
[\ov{\mc O}^+_{n,\Z}] &\cong \VV^{\otimes m} \otimes \wedge^{n}\WW,
\end{split}
\end{align}
where $\VV$ (respectively, $\WW$) is the natural representation (respectively, its restricted dual) of the integral form $U(\gl_\infty)_\Z$ of $U(\gl_\infty)$. Here and below $\wedge^{k}\VV$ and $S^k\VV$ denote the $k$th exterior and symmetric powers of $\VV$, respectively.

The isomorphisms \eqref{eq:VWn} at $n=\infty$  become
\begin{align}
\label{eq:VWinf}
\begin{split}
[{\mc O}^+_\Z] &\cong \VV^{\otimes m} \otimes \wedge^{\infty}\VV,
\\
[\ov{\mc O}^+_\Z] &\cong \VV^{\otimes m} \otimes \wedge^{\infty}\WW.
\end{split}
\end{align}
Indeed, one first constructs $q$-versions of the right-hand side \eqref{eq:VWn}--\eqref{eq:VWinf} as modules over the (Lusztig integral form of) quantum group $U(\gl_\infty)_{\Z[q,q^{-1}]}$ (cf. \cite{Lus10}) and then specialize them at $q=1$. In this case, one constructs the standard basis, canonical  and dual canonical bases on (a completion of) $\VV^{\otimes m} \otimes \wedge^{n}\VV$ and other variants of tensor spaces. Similar remarks on $q$-deformation and canonical bases apply to \eqref{eq:VWzeta} below.

On the other hand, by the results in this section we have, for $n\leq \infty$,
\begin{align}
\label{eq:VWzeta}
\begin{split}
[{\mc W}(\zeta)^+_{{n}}] &\cong S^{m_\zeta}\VV  \otimes \wedge^{{{n}}}\VV,
\\
[\ov{\mc W}(\zeta)^+_{{n}}] &\cong S^{m_\zeta}\VV \otimes \wedge^{{{n}}}\WW,
\end{split}
\end{align}
which is a parabolic variant of the categorification in \cite[Theorem~ 44]{CCM23}. Here $S^{m_\zeta}\VV =S^{m_1} \VV\otimes \ldots \otimes S^{m_r}\VV$, where $m=m_1+\ldots +m_r$ is the Jordan block type of $\zeta$ corresponding to a Levi subalgebra of $\gl(m)$. ($S^{m_\zeta}\VV$ was denoted as $\mathbb{T}^{m}_\zeta$ in \cite{CCM23}.)
One can view \eqref{eq:VWn}--\eqref{eq:VWinf} as a special case of \eqref{eq:VWzeta} for $\zeta =0$, since in this case $\ov{\mc W}(\zeta)^+$ reduces to $\ov{\mc O}^+$, $S^{m_\zeta}\VV$ reduces to $\VV^{\otimes m}$, and so on, and we are back to the setting. Indeed, one shows, as in \cite{CCM23} (generalizing \cite{CLW15}), that the standard Whittaker, tilting and simple modules in $\ov{\mc W}(\zeta)^+$ correspond to proper standard, canonical and dual canonical bases for $S^{m_\zeta}\VV \otimes \wedge^{\infty}\WW$, respectively. (There are two versions of standard bases.)

As noted in \cite{CWZ08}, there is a natural  isomorphism
\[
\wedge^{\infty}\VV \cong \wedge^{\infty}\WW,
\]
both as the basic representation of $U(\gl_\infty)_\Z$-module of level one. This induces a natural isomorphism between the two right-hand sides in \eqref{eq:VWinf} (and respectively, in \eqref{eq:VWzeta} for $n=\infty$) and an identification of standard, (dual) canonical bases between them. Building on \cite{CCM23}, such an isomorphism in the setting of \eqref{eq:VWzeta} for $n=\infty$ is a main motivation behind the super duality in Theorem~\ref{thm:SD}.

Similarly, for $\g_n$ and $\sg_n$ with head diagrams of type $BCD$, we have isomorphisms (as modules over certain $\imath$quantum group of type AIII) as in \eqref{eq:VWn}, \eqref{eq:VWinf} and \eqref{eq:VWzeta}, and all discussions above remain valid once we replace Lusztig (dual) canonical bases of type A by Bao-Wang (dual) $\imath$canonical bases of type AIII; see \cite{BW18KL, CC24} for an $\imath$canonical basis formulation of Kazhdan-Lusztig theories for BGG module category and Whittaker module category over $\sg_n$ and $\g_n$.

We conclude this section by remarking that one can construct more general parabolic variants of the categorification of \cite[Theorem 44]{CCM23} than the ones discussed above for the Fock spaces in  \eqref{eq:VWzeta}. In fact, arbitrary tensor products of symmetric and exterior powers of $\VV$ and $\WW$ can be categorified by parabolic Whittaker categories of $\gl(m|n)$-modules, with appropriately chosen simple system $\Pi$ and compatible parabolic subalgebras $\mf p$ and $\mf q$. A similar remark applies to the $\imath$quantum group versions in \cite{CC24} as well.

\section{Module categories for finite $W$-superalgebras}
\label{sec:Walgebra}

In this section, we formulate the finite $W$-superalgebras $U(\g,e)$ associated to a basic classical Lie superalgebra $\g$ and an even nilpotent element $e$ which lies in an even Levi subalgebra $\fl$ of $\g$. Then we formulate a parabolic category $\cO$ of $U(\g,e)$-modules with ``Levi subalgebra" $U(\fl,e)$.

\subsection{Finite $W$-superalgebras}
\label{subsec:finiteW}

We continue the setup in \S\ref{subsec:setup}. Recall the $\Z$-grading \eqref{eq:eigen} of a basic classical Lie superalgebra $\g$ given by $\ad\,\theta$ for an integral element $\theta\in \h$, whose degree $0$ component is an even Levi subalgebra $\fl =\g_{\theta,0}$. Recall the triangular decomposition \eqref{eq:tri} and \eqref{eq:udecomp}.

Recall the $\mf{sl}(2)$-triple $\{e,h,f\}$ in $\g$. The eigenspace decomposition of $\ad\, h$ gives rise to a Dynkin grading of $\g$:
\[
\g=\bigoplus_{j\in\Z}\g(j),
\qquad \text{ where }
\g(j) :=\{ x\in \g \mid [h,x]=jx\}.
\]

Note that $e\in \g(2)$. The element $e$ defines a linear map
\begin{align}\label{eq:chi}
 \chi:\g\longrightarrow \C,
 \qquad
 \chi(x)=(e|x).
\end{align}
This leads to an even super-skewsymmetric bilinear form
\begin{align}\label{eq:omegachi}
  \omega_\chi:\g\times\g\longrightarrow\C,
  \qquad
  \omega_\chi(x,y)=\chi([x,y]).
\end{align}
This form restricts to a non-degenerate super-symplectic bilinear form $\omega_\chi:\g(-1)\times\g(-1)\rightarrow\C$.
%Since our grading is a Dynkin grading, and
Note that $\dim\g(-1)_\oa$ is even. Throughout we shall assume that
\begin{align}
    \label{eq:g1even}
    \dim\g(-1)_\ob \text{ is even}.
\end{align}
(This will be automatically satisfied in the setup for super duality later on.)

One checks by \eqref{eq:t} that $\mf t$ preserves the super-symplectic form $\omega_\chi|_{\g(-1)}$. Hence we can choose a $\mf t$-invariant Lagrangian subspace $l$ of $\g(-1)$ with respect to $\omega_\chi$ and define
\begin{align}\label{eq:spacem}
    \mf m:=l\oplus\bigoplus_{j<-1}\g(j).
\end{align}
Note that $\chi$ is a character of $\mf m$ which vanishes on $\mf m_\ob$. Set
\begin{align}
  \label{eq:mchi}
\mf m_\chi:=\{m-\chi(m)\mid m\in\mf m \}
\end{align}
and let $I_\chi$ be the left ideal of $U(\g)$ generated by $\mf m_\chi$. Also let $Q_\chi$ denote the left $U(\g)$-module $U(\g)/I_\chi$. The finite $W$-superalgebra associated to $e$ is defined to be the associative superalgebra
\begin{align*}
   U(\g,e) =\text{End}_\g(Q_\chi)^{\text{opp}}.
\end{align*}
Therefore, $Q_\chi$ is a $\big(U(\g), U(\g,e)\big)$-bimodule.
It is well known (cf., e.g., \cite{Wan11}) that we can identify
\begin{align*}
    U(\g,e)&= \left(U(\g)/I_\chi\right)^{\ad\,\mf m}\\
    &=\{u+I_\chi\in U(\g)/I_\chi\mid [m,u]\in I_\chi,\forall m\in\mf m\}.
\end{align*}
Since, by choice, the Lagrangian subspace $l$ is $\mf t$-invariant, we have that $\mf m$ is $\mf t$-invariant, and thus, $\chi([t,x])=0$, for all $t\in\mf t$ and $x\in\mf m$. Therefore, we conclude that
\begin{align}  \label{eq:tUge}
    \mf t\subseteq U(\g,e).
\end{align}\

Thus, $\texttt{Sk} :=Q_\chi\otimes_{U(\g,e)} -$ defines a functor from the category of $U(\g,e)$-modules to the category of $\g$-modules on which $\mf m_\chi$ acts locally nilpotently.

\begin{prop} \cite{Zh14, SX20}
\label{prop:sk}
The functor $\texttt{Sk}$ is an equivalence of categories from the category of $U(\g,e)$-modules to the category of $\g$-modules on which $\mf m_\chi$ acts locally nilpotently.
(It remains an equivalence when restricting to subcategories of finitely generated modules.)
\end{prop}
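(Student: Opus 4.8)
This is the \emph{Skryabin equivalence}; for basic classical Lie superalgebras it is due to \cite{Zh14} and \cite{SX20}, and I will outline the structure of the argument. Write $1_\chi:=1+I_\chi\in Q_\chi$; it generates $Q_\chi$ as a $\g$-module and is annihilated by $\mf m_\chi$, and I denote the target category by $\mathcal C$. The first point is that for an arbitrary $U(\g,e)$-module $N$ the space $\mf m_\chi$ acts locally nilpotently on $\texttt{Sk}(N)=Q_\chi\otimes_{U(\g,e)}N$: the left $\g$-action on $Q_\chi$ commutes with the right $U(\g,e)$-action, so $\mf m_\chi$ acts only on the first tensor factor, and since $\mf m_\chi$ acts locally nilpotently on $Q_\chi$ it annihilates any finite sum $\sum q_i\otimes n_i$ after finitely many steps. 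Local nilpotence on $Q_\chi$ itself will follow from the Kazhdan filtration: $\mf m_\chi$ maps to zero in the associated graded $\gr Q_\chi\cong\C[\,e+\g_f\,]$ (the coordinate ring of the Slodowy slice), hence strictly lowers the Kazhdan filtration, which is bounded below. Thus $\texttt{Sk}$ does take values in $\mathcal C$.

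For $M\in\mathcal C$ one has $\Hom_\g(Q_\chi,M)\cong M^{\mf m_\chi}$, the space of Whittaker vectors, which is naturally a module over $U(\g,e)=\End_\g(Q_\chi)^{\mathrm{opp}}$; by tensor--hom adjunction the functor $(-)^{\mf m_\chi}:\mathcal C\to U(\g,e)\mod$ is right adjoint to $\texttt{Sk}$, and the plan is to show that its unit and counit are isomorphisms. The two structural inputs I would take from Premet's theory, in its super form \cite{Zh14, SX20}, are: (i) a PBW theorem for $Q_\chi$ --- choosing an $\ad h$-stable complement $\mathfrak v$ to $\mf m$ in $\g$ that contains $\g_e$, with a homogeneous basis whose first $\dim\g_e$ vectors span $\g_e$, one produces generators $\Theta_1,\dots,\Theta_{\dim\g_e}$ of $U(\g,e)$ lifting a basis of $\g_e$ and an isomorphism of right $U(\g,e)$-modules $Q_\chi\cong U(\g,e)\otimes_\C\Sym(\mathfrak y)$, where $\mathfrak y$ is the span of the remaining basis vectors of $\mathfrak v$ and $1_\chi\mapsto 1\otimes 1$; in particular $Q_\chi$ is free as a right $U(\g,e)$-module, so $\texttt{Sk}$ is exact, $(Q_\chi)^{\mf m_\chi}=(Q_\chi)^{\ad\mf m}=U(\g,e)$, and the unit $U(\g,e)\to(Q_\chi\otimes_{U(\g,e)}U(\g,e))^{\mf m_\chi}$ is the identity; and (ii) the functor $(-)^{\mf m_\chi}$ is exact on $\mathcal C$ and every $M\in\mathcal C$ is generated as a $\g$-module by $M^{\mf m_\chi}$, equivalently the counit $\texttt{Sk}(M^{\mf m_\chi})\to M$ is an isomorphism --- which, after passing to Kazhdan-associated graded modules, reduces to the statement that $\gr M$ is freely generated over $\Sym(\mathfrak y)$ by $\gr(M^{\mf m_\chi})$.

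Granting (i) and (ii): the counit is an isomorphism by (ii), and for the unit $N\to(\texttt{Sk}(N))^{\mf m_\chi}$ one argues by a five lemma --- it is the identity on free modules $U(\g,e)^{(I)}$ by (i), and for general $N$ one chooses a presentation $U(\g,e)^{(J)}\to U(\g,e)^{(I)}\to N\to 0$, applies the exact functor $\texttt{Sk}$ (whose image lies in $\mathcal C$) and then the exact functor $(-)^{\mf m_\chi}$. Hence $\texttt{Sk}$ is an equivalence with quasi-inverse $(-)^{\mf m_\chi}$. The restriction to finitely generated modules is then routine: $\texttt{Sk}(N)$ is cyclic over $U(\g)$, generated by $1_\chi\otimes n_j$ for generators $n_j$ of $N$, whenever $N$ is finitely generated, and conversely a standard Kazhdan-filtration argument shows $M^{\mf m_\chi}$ is finitely generated over $U(\g,e)$ whenever $M$ is finitely generated over $U(\g)$.

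The main obstacle is establishing (i) and (ii) --- the freeness of $Q_\chi$ over $U(\g,e)$ together with the exactness of the Whittaker-vector functor on $\mathcal C$ --- in the Lie superalgebra setting; this is precisely the technical core of \cite{Zh14, SX20}, where Premet's Kazhdan-filtration and PBW arguments are carried through with the appropriate parity signs. It is here that the hypothesis \eqref{eq:g1even} that $\dim\g(-1)_\ob$ be even enters: it guarantees the existence of the $\mf t$-invariant Lagrangian $l\subseteq\g(-1)$ underlying the definitions of $\mf m$ and $\mf m_\chi$.
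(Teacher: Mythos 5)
Your outline is correct and follows essentially the route the paper itself relies on: the paper's proof of this proposition consists of citing the superized Skryabin argument (observed in \cite{Zh14}) and the alternative proof via Losev's decomposition theorem (superized in \cite{SX20}), together with the remark that Noetherianity of $U(\g)$ takes care of the finitely generated clause, so your more detailed sketch of the Skryabin-style argument (freeness of $Q_\chi$ over $U(\g,e)$, Whittaker-vector functor as quasi-inverse) is consistent with what is being invoked. One small slip worth fixing: $\gr Q_\chi$ with respect to the Kazhdan filtration is the coordinate ring of the affine subspace $\chi+\mf m^{\perp}$, not of the Slodowy slice --- it is $\gr U(\g,e)$ that identifies with the coordinate ring of the slice --- though this does not affect your local-nilpotence argument, which only needs that elements of $\mf m_\chi$ strictly lower the (bounded below) filtration.
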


\begin{proof}
    The equivalence $\texttt{Sk}$ for a reductive Lie algebra $\g$ was due to Skryabin (\cite[Appendix]{Pre02}). For a Lie superalgebra $\g$ with $\g(-1)$ even-dimensional (which is assumed throughout this paper), it was observed in \cite[Remark~ 3.10]{Zh14} that Skryabin's proof can be ``superized''. For Lie algebras, Losev in \cite[Theorem 1.1.4]{Los10JAMS} gave a very different proof from Skryabin's using his decomposition theorem, see \S\ref{sec:decomp:thm} below. This proof was then generalized to the superalgebra setting in \cite[Theorem 4.1]{SX20}.

Since $U(\g)$ is Noetherian, we have that finitely generated modules over $U(\g)$ are Noetherian.  From this, it follows that Skryabin's equivalence  $\texttt{Sk}$ restricts to an equivalence of the respective subcategories of finitely generated modules.
\end{proof}

\begin{rem}
Finite $W$-superalgebras are usually defined starting with a more general grading called {\em good grading} (\cite[\S 0]{EK05}) instead of a Dynkin grading, as we have done here. However, for our main purpose, the ones constructed from Dynkin gradings will be sufficient. Hence we shall restrict ourselves to this case.
\end{rem}

\subsection{Categories of $U(\g,e)$-modules} \label{sec:cat:Wmod}

Denote the centralizer of $e$ in $\g$ by
\begin{align}\label{eq:ge}
 \g_e:=\{x\in\g\mid [e,x]=0\}.
\end{align}
By \cite[Theorem 3.8]{BGK08} and its straightforward super variant, we have a (non-unique) $\mf t$-module isomorphism:
\begin{align}\label{eq:uge}
    U(\g_e)\cong U(\g,e).
\end{align}

By the $\mf t$-module isomorphism \eqref{eq:uge} we have an $\ad\, \theta$-eigenspace decomposition
\begin{align}
    U(\g,e)=U(\g,e)_0+\sum_{k\in\Z}U(\g,e)_k.
\end{align}
Set $U(\g,e)_{\ge 0}=\sum_{k\ge 0}U(\g,e)_k$ and $U(\g,e)_{>0}=\sum_{k> 0}U(\g,e)_k$. Furthermore let
\[
U(\g,e)_{\#}=U(\g,e)_{\ge 0}\cap U(\g,e)U(\g,e)_{>0},
\]
which is a two-sided ideal of $U(\g,e)_{\ge 0}$. We have by \cite[Theorem 4.1]{Los12} (or by Remark \ref{rem::wl=w0} below)
\begin{align}
  \label{eqn:iso:1}
    U(\g,e)_{\ge 0}/U(\g,e)_{\#}\cong U(\mf l,e).
\end{align}
Denote by $\widetilde{\mc O}(\theta,e)$ the category of finitely generated $U(\g,e)$-modules $M$ such that for any $x\in M$ there exists $k_x\in\Z$ with $U(\g,e)_k x=0$ for all $k\ge k_x$.

Now, given a $U(\mf l,e)$-module $V$, we can regard it as a $U(\g,e)_{\ge 0}$-module via the isomorphism \eqref{eqn:iso:1}. Thus, we can define the induced module
\begin{align}\label{eq::ind:w}
{\mc M}^{\theta,e}(V)=U(\g,e)\otimes_{U(\g,e)_{\ge 0}}V.
\end{align}
This gives rise to a functor $\mc M^{\theta,e}$ from the category of finitely generated $U(\mf l,e)$-modules to $\widetilde{\mc O}(\theta,e)$.

On the other hand, for $M\in\widetilde{\mc O}(\theta,e)$ we let
\[
\mc F(M) =\{x\in M\mid ux=0,\forall u\in U(\g,e)_{>0}\}
\]
so that $\mc F(M)$ is naturally a $U(\mf l,e)$-module by \eqref{eqn:iso:1} again. The functor $\mc F$ is right adjoint to $\mc M^{\theta,e}$.

Let $\mc O(\theta,e)$ be the subcategory of $\widetilde{\mc O}(\theta,e)$ of $U(\g,e)$-modules $M$ for which $\dim\mc F(M)<\infty$.  The functor $\mc M^{\theta,e}$ restricts to a functor from the category of finite-dimensional $U(\mf l,e)$-modules to $\mc O(\theta,e)$ with right adjoint being the restriction of the functor $\mc F$ above:
\[
\xymatrix{
U(\mf l,e)\mod
 \ar[r]^{\quad \mc M^{\theta,e}}
 & \mc O(\theta,e)  ,
& U(\mf l,e)\mod
& \mc O(\theta,e)
\ar[l]_{\quad \mc F} .
}
\]
Also, we let $\mc O^{\mf r}(\theta,e)$ be the full subcategory of $\mc O(\theta,e)$ consisting of objects on which $\mf r$ acts semisimply.

Now if $V$ is a finite-dimensional irreducible $U(\mf l,e)$-module, then $\mc M^{\theta,e}(V)$ has a composition series and a unique irreducible quotient, denoted by $\mc L^{\theta,e}(V)$ according to \cite[Corollary 4.11]{BGK08} (see also \cite[Corollary~ 3.6, Proposition 3.7]{Los12}). Note that if $V$ is finite dimensional and semisimple over $\mf r$, then the module $\mc M^{\theta,e}(V)$ lies in $\mc O^{\mf r}(\theta,e)$.

Suppose that we have two integral elements $\theta,\theta'\in\mf t$ that give rise to two minimal full subalgebras $\mf r,\mf r'$ such that $\mf r\subseteq\mf r'\subseteq \mf t$. Since the parabolic subalgebras \eqref{eq:para} determined by $\theta$ and $\theta'$ are assumed to be compatible with the same triangular decomposition \eqref{eq:tri} we conclude that $\mc O(\theta,e)\subseteq\mc O(\theta',e)$ and $\mc O^{\mf r}(\theta,e)\subseteq\mc O^{\mf r'}(\theta',e)$.

\begin{ex}
When $\mf r=\mf t$ the category $\mc O^{\mf t}(\theta,e)$ is the original category $\mc O$ for finite $W$-algebra defined in \cite{BK08}. In this case $e$ is principal nilpotent in the Levi subalgebra $\mf l$, and hence the irreducible $U(\mf l,e)$-modules are one-dimensional according to \cite{Kos78}. If $V$ is a one-dimensional $U(\mf l,e)$-module, then $\mc M^{\theta,e}(V)\in\mc O^{\mf t}(\theta,e)$ is the Verma module of \cite{BK08}.
\end{ex}

\subsection{An example}
Let $\g=\gl(7)$ and let $e$ be the nilpotent element associated with the following pyramid, see, e.g., \cite[\S3.1]{BK08}.
 %\ytableausetup{centertableaux}
\begin{align}
   \begin{ytableau}
    \none & 1 & \none \\
    2 & 3  & 4\\
    5 & 6  & 7
\end{ytableau}
\end{align}
The corresponding nilpotent element is $e=E_{23}+E_{34}+E_{56}+E_{67}$ with the grading operator $h=2E_{22}+2E_{55}-2E_{44}-2E_{77}$. Together with the element $f=2\left(E_{32}+E_{43}+E_{65}+E_{76}\right)$ they form an $\mf{sl}(2)$-triple inside $\g=\mf{gl}(7)$. Here we have used $E_{ij}$ to denote the usual elementary matrix of $\mf{gl}(n)$. The eigenvalues of ad$h$ give rise to the Dynkin grading $\g=\oplus_{j\in\Z}\g(j)$. Note that $\g(0)$ is the subalgebra corresponding to the columns, i.e., $\g(0)\cong\gl(2)\oplus\gl(3)\oplus\gl(2)$, generated by the root vectors corresponding to the roots $$\{\pm(\ep_2-\ep_5),\pm(\ep_1-\ep_3),\pm(\ep_3-\ep_6),\pm(\ep_4-\ep_7)\}.$$
 This grading determines a $W$-algebra $U(\g,e)$.
 The subalgebra $\mf t$ is $3$-dimensional and spanned by the following basis $$\{E_{11}, E_{22}+E_{33}+E_{44}, E_{55}+E_{66}+E_{77}\}.$$ We have a $\mf t$-module isomorphism between $U(\g,e)$ and $U(\g_e)$.
 An ``integral element'' $\theta\in\mf t$ is of the form
\begin{align*}
   \theta=\theta_1 E_{11}+\theta_2\left(E_{22}+E_{33}+E_{44}\right)+\theta_3\left(E_{55}+E_{66}+E_{77}\right),
\end{align*}
where $\theta_i\in\Z$. The eigenvalues of $\theta$ gives rise to a $\Z$-gradation $\g=\oplus_{k\in\Z}\g_k$ with $\fl =\g_0$.

In the case when all $\theta_i$ are the same, $\theta$ is a multiple of the identity element $I$, and hence it gives the eigenvalue decomposition $\g=\g_0=\mf l$ and hence $\g(0)\cap\g_0=\g(0)$. The ``Levi subalgebra" $U(\mf l,e)$ of $U(\g,e)$ is $U(\g,e)$ itself.
%By \cite{Los12} $\mc W(\theta)$ is equivalent to $\mc O(\theta)$ of $U(\g,e)$-modules.

Now consider the case when all $\theta_i$ are distinct, i.e., $\theta$ is regular (e.g., $\theta_1=1,\theta_2=0,\theta_3=-1$). In this case $\mf l=\g_0$ is the subalgebra corresponding to the rows, i.e., $\mf l\cong\gl(1)\oplus\gl(3)\oplus\gl(3)$, generated by the Cartan subalgebra and the root vectors corresponding to the roots $$\{\pm(\ep_2-\ep_3),\pm(\ep_3-\ep_4),\pm(\ep_5-\ep_6),\pm(\ep_6-\ep_7)\}.$$
The ``Levi subalgebra" $U(\mf l,e)$ of $U(\g,e)$ in this case is commutative and isomorphic to the center of $U(\mf l)$.

\section{Losev-Shu-Xiao decomposition and finite $W$-superalgebras}\label{sec:Wsuper duality}

In this section, we set up (in a slightly extended form which we need) Shu-Xiao's super generalization of Losev's approach to finite $W$-algebras. As consequences we formulate equivalences between categories of Whittaker $\g$-modules and module categories for finite $W$-superalgebras. Then we formulate the super duality between a finite $W$-algebra and a finite $W$-superalgebra at infinite-rank limit.

\subsection{Super Darboux-Weinstein decomposition}

Recall the $\mf{sl}(2)$-triple $\{e,h,f\}$ in a basic classical Lie superalgebra $\g$ as in \S\ref{subsec:setup}, the linear map $\chi:\g\rightarrow\C$ from \eqref{eq:chi} given by $(e|-)$. We have a vector space decomposition: $\g=\g_e\oplus [f,\g]$, and the super-skewsymmetric bilinear form $\omega_\chi$ from \eqref{eq:omegachi} restricted on $[f,\g]$ is non-degenerate. Recall our assumption \eqref{eq:g1even} that $\dim_\C\g(-1)$ is even. Let $\mf m$ be as in \eqref{eq:spacem}, which is a Langragian subspace of $[f,\g]$ lying in the negative degree component of $\g$, and let $\mf m^* \subseteq [f, \g]$ be a ``dual'' of $\mf m$ with respect to $\omega_\chi$.  Let $V:=\mf m_\chi\oplus \mf m^*$. We note that $h$ acts on both $V$ and $\g_e$ via the Kazhdan grading, which is the grading on $\g$ determined by the eigenvalues of $\ad\,h$ shifted by $2$.

Write $\g_\chi=\{g-\chi(g)|g\in\g\}$ so that we have:
\begin{align*}
    \g_\chi=\g_e\oplus V.
\end{align*}
Note that $S(\g)$ has a Poisson structure given by the Lie super bracket. On the other hand, since $V$ is a symplectic superspace it has a Poisson structure given by the super-symplectic form.

The adjoint action of $\mf t\times \C h$ on $S(\g)$ integrates to an action of the adjoint group $T\times \C^\times$, where  $\C^\times$ corresponds to a subgroup in the adjoint group of $\g_\oa$ that gives rise to the Kazhdan grading.

The following is an equivariant version of super Darboux-Weinstein theorem \cite[Theorem 1.3]{SX20}. The version stated and proved in loc.~cit.~is a $\C^\times$-equivariant version, but the same proof works in the $T\times \C^\times$-equivariant version below (cf.~the proof of \cite[Theorem 3.3.1]{Los10JAMS}).

\begin{prop}\label{prop::DW}
    We have a $T\times \C^\times$-equivariant isomorphism of Poisson algebras
\begin{align*}
    S(\g)^{\wedge}_\chi\cong S(\g_e)^{\wedge}_\chi\widehat{\otimes} S(V)^{\wedge}_0.
\end{align*}
The notation ${\cdot}^{\wedge}_\chi$ denotes completion with respect to the maximal ideal corresponding to the point $\chi$. Furthermore, $S(\g_e)$ and $S(V)$ Poisson commute with each other.
\end{prop}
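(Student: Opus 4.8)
The plan is to run the proof of the super Darboux--Weinstein theorem of Shu--Xiao \cite[Theorem 1.3]{SX20}, while tracking the extra $T$-symmetry, exactly as Losev upgrades his even version to $\C^\times$-equivariance in the proof of \cite[Theorem 3.3.1]{Los10JAMS}. The inputs are already in place above: the vector-space decomposition $\g_\chi=\g_e\oplus V$; the fact that $\omega_\chi$ makes $V$ into a symplectic superspace, so that $S(V)$ carries the associated Poisson bracket while $S(\g_e)$ carries its Lie--Poisson bracket ($\g_e$ being a subalgebra); and the fact that the adjoint action of $\mf t\times\C h$ on $S(\g)$ integrates to $T\times\C^\times$. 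The one genuinely new point is equivariance under $T$, and for this the decisive observation is that, since the Lagrangian $l\subseteq\g(-1)$ was chosen $\mf t$-invariant, the subspaces $\mf m$, $\mf m^*$, $V$, $\g_e$ and $[f,\g]$ are all $\mf t$-stable, so the whole decomposition is $T$-equivariant.

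First I would construct two Poisson-algebra homomorphisms into $S(\g)^{\wedge}_\chi$, namely $\iota_V\colon S(V)^{\wedge}_0\to S(\g)^{\wedge}_\chi$ and $\iota_e\colon S(\g_e)^{\wedge}_\chi\to S(\g)^{\wedge}_\chi$, each of which reduces, modulo deeper terms of the $\mathfrak m_\chi$-adic filtration, to the linear inclusion dictated by $\g_\chi=\g_e\oplus V$. These are produced by the standard Darboux--Weinstein recursion: one corrects the naive linear inclusions term by term so that the Poisson identities hold on the nose, the correction at each stage lying in a strictly deeper piece of the filtration --- controlled by the Kazhdan grading (the $\ad\,h$-grading shifted by $2$) --- so that the series converge in the completion. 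One then checks that the images of $\iota_V$ and $\iota_e$ Poisson commute (clear on associated graded algebras, and propagated up the filtration) and that the resulting map
\[
S(\g_e)^{\wedge}_\chi\widehat{\otimes} S(V)^{\wedge}_0\longrightarrow S(\g)^{\wedge}_\chi
\]
is an isomorphism, since on associated graded algebras it is the identity of $S(\g_e)\otimes S(V)=S(\g_\chi)$. This is exactly the content of \cite[Theorem 1.3]{SX20}.

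For equivariance, one notes that $\chi$, $\omega_\chi$ and the Kazhdan grading are $T\times\C^\times$-stable data and that $V$ and $\g_e$ are $T$-stable; hence at each stage of the recursion the correction term can be chosen inside the relevant $T\times\C^\times$-isotypic component, so that $\iota_V$, $\iota_e$ and therefore the isomorphism are $T\times\C^\times$-equivariant. The $\C^\times$-part is already established in \cite{SX20}; the $T$-part costs nothing, given the $\mf t$-invariance of $l$. The final assertion, that $S(\g_e)$ and $S(V)$ Poisson commute, is precisely the commutativity of $\operatorname{im}\iota_e$ and $\operatorname{im}\iota_V$ established in the previous step.

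The main obstacle is the analytic core of the Darboux--Weinstein recursion --- making precise in what sense the completed objects are Poisson algebras and verifying convergence of the correction series --- together with the super-sign bookkeeping in passing from the even to the $\Z_2$-graded setting. All of this is carried out in \cite[Theorem 1.3]{SX20} (following \cite[\S3.3]{Los10JAMS}), and since the equivariant refinement we need introduces no new difficulty of this kind, there is nothing further to do here beyond the remark preceding the statement.
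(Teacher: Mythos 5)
Your proposal matches the paper's treatment exactly: the paper offers no argument beyond citing \cite[Theorem 1.3]{SX20} and remarking that the same proof yields the $T\times\C^\times$-equivariant version (cf.\ the proof of \cite[Theorem 3.3.1]{Los10JAMS}), which is precisely your point that the $\mf t$-invariance of the Lagrangian $l$ makes $\mf m$, $V$ and $\g_e$ all $T$-stable, so the Darboux--Weinstein recursion can be carried out equivariantly. One small caveat: the Poisson bracket on the factor $S(\g_e)^{\wedge}_\chi$ is the transverse (Slodowy-slice) structure it inherits inside $S(\g)^{\wedge}_\chi$, not the Lie--Poisson bracket of the subalgebra $\g_e$, so your $\iota_e$ should be read as identifying the Poisson commutant of $\operatorname{im}\iota_V$ with $S(\g_e)^{\wedge}_\chi$ as a commutative algebra, rather than as a Poisson homomorphism out of the Lie--Poisson algebra.
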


\begin{rem}
Though not explicitly addressed in \cite{SX20}, the notation $\widehat{\otimes}$ in Proposition~ \ref{prop::DW} (and similarly in Theorem~ \ref{thm:qDW} below) means taking the completion of the tensor product with respect to the maximal ideal corresponding to the point $(\chi,0)\in \g_e^*\times V^*$ (see \cite[Proposition 2.1]{Los12}). Here, as a vector superspace, this ``completed'' tensor product is isomorphic to $S(\g_e\times V)^\wedge_{(\chi,0)}$. Note also that $S(V)^{\wedge}_0$ is just the formal power series in the variables of $V$.
\end{rem}

\subsection{A super setting for Losev's decomposition}\label{sec:decomp:thm}

For applications to finite $W$-(super)algebras we need a quantum version of Proposition \ref{prop::DW}. The quantum analogue of a Poisson structure is given by a star product.

Given a Poisson superalgebra $(A,\{\cdot,\cdot\})$ over $\C$, a star product on $A$ is an associative product $*$ on $A[[\hbar]] = A \otimes_\C \C[[\hbar]]$ of the form:
\begin{align*}
    f*g=fg+\sum_{i=1}^\infty D_i(f,g)\hbar^{2i}
\end{align*}
where $\hbar$ is a formal parameter and $D_i(f,g)\in A$. Furthermore, it needs to satisfy among others the following condition:
\begin{align*}
    f*g- (-1)^{|f|\cdot |g|} g*f-\{f,g\}\in \hbar^3 A[[\hbar]],
\end{align*}
where $f,g \in A$ are assumed to be $\Z_2$-homogeneous of degree $|f|, |g|$.
We will not list the other so-called continuity conditions which can be found in \cite[Section 2]{Los12}. They are in place to assure that the star product is homogeneous with respect to the Kazhdan grading and that it can be extended to various completions, and also makes sense when we set $\hbar=1$.

\begin{ex} \label{ex:star:g} Let $\g$ be a Lie superalgebra.
  As the standard quantization of the Poisson superalgebra $(S(\g),[\cdot,\cdot])$, we have $(S(\g)[[\hbar]],*) :=T(\g)[[\hbar]]/I$, where $I$ is the ideal generated by   $a\otimes b- (-1)^{|a|\cdot |b|} b\otimes a -[a,b]\hbar^2$, for $a,b \in \g$ $\Z_2$-homogeneous. By a PBW-type argument we see that the $T(\g)[[\hbar]]/I$ is an associative superalgebra isomorphic to $S(\g)[[\hbar]]$ as a vector space, and hence gives rise to a star product on $S(\g)$, called the Gutt star product.
\end{ex}

\begin{ex}\label{ex:star:v}
 Suppose that $(V,\omega)$ is a symplectic superspace so that $S(V)$ is naturally a Poisson superalgebra with Poisson bracket determined by $\{v,w\}:=\omega(v,w)$, $v,w\in V$. It has a standard quantizaton $(S(V)[[\hbar]],*)$ satisfying the relation $v*w- (-1)^{|v|\cdot |w|} w*v=\omega(v,w)\hbar^2$. The corresponding star product is a Moyal-Weyl star product associated with a constant nondegenerate bivector on $V$.
\end{ex}

Recall the action of the group $T\times \C^\times$ on $S(\g)$, where $\C^\times$ is a subgroup in the adjoint group of $\g_\oa$ determining the Kazhdan grading. Letting $t\cdot\hbar=t\hbar$ for $t\in\C^\times$, we see that star product of $(S(\g)[[\hbar]],*)$ is homogeneous and $T\times \C^\times$-equivariant. Furthermore, as mentioned above, the ``continuity'' conditions of the star product allows us to extend the quantum algebra structures in Examples \ref{ex:star:g}--\ref{ex:star:v} to their respective completions as defined in Proposition \ref{prop::DW}.

We are now ready to state a slight upgrade of \cite[Theorem 1.6]{SX20}, which was formulated as a $\C^\times$-equivariant version only. However, the proof in loc.~cit.~extends to the $T\times \C^\times$-version that we shall need for our application to the category $\mc O$ of $W$-superalgebras. We note that in the case when $\g$ is a reductive Lie algebra an even stronger equivariant version of the theorem was proved in \cite[Proposition 2.1]{Los12} derived from the earlier results in \cite{Los10JAMS}.

\begin{thm} \rm{(cf. \cite[Theorem 1.6]{SX20})}
\label{thm:qDW}
    We have a $T\times \C^\times$-equivariant isomorphism of the quantum algebras
\begin{align}
 \label{isoSSS}
    S(\g)^{\wedge}_\chi[[\hbar]]\cong S(\g_e)^{\wedge}_\chi[[\hbar]]\widehat{\otimes}_{\C[[\hbar]]} S(V)^{\wedge}_0[[\hbar]].
\end{align}
\end{thm}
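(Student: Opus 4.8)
The plan is to deduce Theorem~\ref{thm:qDW} from its semiclassical counterpart, Proposition~\ref{prop::DW}, by the standard ``uniqueness of equivariant homogeneous quantization'' mechanism, carried out in the $\Z_2$-graded setting exactly as in \cite{SX20} but keeping the torus $T$ in play throughout. First I would note that the left-hand side of \eqref{isoSSS} carries the Gutt star product of Example~\ref{ex:star:g}, extended to the completion $S(\g)^{\wedge}_\chi[[\hbar]]$, while the right-hand side carries the completed tensor product of the Gutt star product on $S(\g_e)$ with the Moyal--Weyl star product on $S(V)$ from Examples~\ref{ex:star:g}--\ref{ex:star:v}; both are homogeneous for the Kazhdan grading (with $\hbar$ in Kazhdan degree one) and satisfy the continuity conditions of \cite[Section~2]{Los12}. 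By Proposition~\ref{prop::DW} the underlying Poisson algebras are $T\times\C^\times$-equivariantly isomorphic; fixing such an isomorphism $\phi$ and transporting the left-hand star product along it yields a second star product $\star'$ on $R:=S(\g_e)^{\wedge}_\chi[[\hbar]]\widehat{\otimes}_{\C[[\hbar]]}S(V)^{\wedge}_0[[\hbar]]$ which is again homogeneous, $T\times\C^\times$-equivariant, and quantizes the \emph{same} Poisson bracket as the standard star product $\star$ on $R$. It therefore remains to produce a $T\times\C^\times$-equivariant equivalence $\star\simeq\star'$.

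For this I would run the usual order-by-order deformation argument. Writing $f\star g-f\star' g=\sum_{i\ge1}E_i(f,g)\hbar^{2i}$, the first nonvanishing $E_i$ is a continuous Hochschild $2$-cocycle on $R$; by the homogeneity and continuity conditions it is a coboundary $\partial T_i$ (this is precisely where the cohomological arguments of \cite[Section~2]{Los12} and of \cite{SX20} enter), and conjugating $\star'$ by $\mathrm{id}+T_i\hbar^{2i}$ kills the degree-$2i$ term without disturbing the lower ones. Iterating produces an equivalence $T_\hbar=\mathrm{id}+\sum_{i\ge1}T_i\hbar^{2i}$ intertwining $\star$ and $\star'$, and $T_\hbar\circ\phi$ is then the desired isomorphism of quantum algebras. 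The point that makes it equivariant is that at each stage $T_i$ may be chosen $T\times\C^\times$-invariant: the group $T\times\C^\times$ is a torus acting rationally on the pro-finite-dimensional space of continuous Hochschild cochains, so projection onto its weight-zero component is a Reynolds operator preserving cocycles and coboundaries, and applying it to $T_i$ keeps the whole induction inside the $T\times\C^\times$-equivariant category.

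The main obstacle --- really the only nonformal point --- is to verify that every step is compatible with the completions and with the continuity conditions, so that the star products, the Hochschild-cohomology vanishing, and the equivalence $T_\hbar$ all genuinely extend from $S(\g)$, $S(\g_e)$, $S(V)$ to $S(\g)^{\wedge}_\chi$, $S(\g_e)^{\wedge}_\chi$, $S(V)^{\wedge}_0$ and remain meaningful after setting $\hbar=1$. This is exactly what is checked in \cite[Section~2]{Los12} (in the reductive Lie algebra case) and in \cite{SX20} (in the super case, for $\C^\times$), and none of it is affected by enlarging $\C^\times$ to $T\times\C^\times$: the $\Z_2$-graded signs are handled verbatim as in \cite{SX20}, and $T$ acts by \emph{even} Poisson automorphisms commuting with the $\C^\times$-action, so the weight decomposition and Reynolds operator for $T\times\C^\times$ behave just like those for $\C^\times$ alone. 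In short, I expect the proof of \cite[Theorem~1.6]{SX20} to apply word for word with ``$\C^\times$'' replaced throughout by ``$T\times\C^\times$''.
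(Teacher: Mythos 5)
Your closing observation --- that $T\times\C^\times$ is reductive, so every step of the induction in \cite{SX20} can be made equivariant by averaging --- is exactly the point the paper is relying on, and that part is fine. But the route you take to the isomorphism itself has two genuine problems. First, the factor $S(\g_e)^{\wedge}_\chi[[\hbar]]$ on the right-hand side does \emph{not} carry the Gutt star product of the Lie algebra $\g_e$: the Poisson structure it inherits from Proposition~\ref{prop::DW} is the transverse (Slodowy slice) Poisson structure, which is nonlinear in general, and its quantization is essentially the finite $W$-(super)algebra --- an \emph{output} of the theorem, not an input (note that $U(\g,e)\not\cong U(\g_e)$ as algebras; \eqref{eq:uge} is only a $\mf t$-module isomorphism). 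So there is no a priori ``standard star product $\star$'' on $R$ of tensor-product form to compare the transported product $\star'$ against, and with your choice of $\star$ the two do not even quantize the same Poisson bracket, so the deformation induction cannot start.

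Second, even granting a candidate $\star$, the step ``$E_i$ is a continuous Hochschild $2$-cocycle, hence by homogeneity and continuity a coboundary'' is unjustified: the Kazhdan-degree $(-2i)$ component of $HH^2(R)$ (polyvector fields, by Hochschild--Kostant--Rosenberg) is nonzero, since $V$ contains elements of both signs of Kazhdan degree; and in any case uniqueness of star products up to gauge equivalence is governed by characteristic classes, not by a blanket vanishing of $HH^2$. The cohomological input in \cite[Section 2]{Los12} and \cite{SX20} is of a different nature: it is the vanishing of the second cohomology of the Weyl (super)algebra of $V$ (equivalently, of the Lie superalgebra $V$ with coefficients in the completed quantum algebra), which is what allows one to lift the classical embedding $V\hookrightarrow S(\g)^{\wedge}_\chi$ to an equivariant embedding of $\mathbf{A}(V)^{\wedge}_0[[\hbar]]$ into $S(\g)^{\wedge}_\chi[[\hbar]]$ order by order in $\hbar$. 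The decomposition \eqref{isoSSS} is then obtained by taking the centralizer of the image of $V$ as the first tensor factor and identifying it with $S(\g_e)^{\wedge}_\chi[[\hbar]]$ as a graded module; equivariance under $T\times\C^\times$ is preserved throughout by the averaging you describe. That is the argument the paper is citing, and it is structurally different from a gauge-equivalence comparison of two preassigned star products.
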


\begin{ex}\label{ex:even:skryabin}
The isomorphism in Theorem \ref{thm:qDW} is equivariant with respect to the Kazhdan grading. Now, suppose that the Dynkin grading on $\g$ is even (i.e., $\g=\bigoplus_{j\in 2\Z}\g(j)$) so that the subalgebra $\mf m_\chi$ consists precisely of the non-positively graded components of $\g$ in the Kazhdan grading. The isomorphism in Theorem \ref{thm:qDW} restricts to an isomorphism of the corresponding $\C^\times$-finite part (with respect to the Kazhdan grading). Setting $\hbar=1$, we get the following isomorphism of associative superalgebras: The left-hand side is precisely $U(\g)^{\wedge}_{\mf m_\chi}$, while the right-hand side is the tensor product of the Weyl algebra of $\mf m_\chi\oplus\mf m^*$ (completed with respect to $\mf m_\chi$) and $S(\g_e)[[\hbar]]/(\hbar-1)$, since $\g_e$ is positively graded with respect to the Kazhdan grading. From the representation theory of Weyl algebra, it follows that the category of $\g$-modules on which $\mf m_\chi$ acts locally nilpotently is equivalent to the $S(\g_e)[[\hbar]]/(\hbar-1)$-module category. Now, we observe that
$$
U(\g,e) =\left(U(\g)/U(\g)\mf m_\chi\right)^{\ad\,\mf m}\cong \big(U(\g)^{\wedge}_{\mf m_\chi}/U(\g)^{\wedge}_{\mf m_\chi}\mf m_\chi\big)^{\ad\,\mf m}\cong S(\g_e)[[\hbar]]/(\hbar-1).
$$
Therefore, the $S(\g_e)[[\hbar]]/(\hbar-1)$-module category above is just the category of $U(\g,e)$-module. Thus, in the case when the Dynkin grading is even, Theorem~ \ref{thm:qDW} readily implies the Skryabin equvalence in Proposition~ \ref{prop:sk}.
\end{ex}

For the case when the Dynkin grading is not even, and especially in order to make connection between categories of Whittaker modules over Lie superalgebras of Section \ref{sec:SD:MMS} and categories modules over finite $W$-(super)algebras, we need to extend the isomorphism in Theorem \ref{thm:qDW} between the $\C^\times$-finite parts which we shall explain below.

Let $\theta\in\mf t$ be an integral element giving rise to a parabolic decomposition of $\g=\mf u^-\oplus\mf l\oplus\mf u$ as in \eqref{eq:paracomp}. Let $d$ be the maximal eigenvalue of $\ad h$. Let $m>2d+2$, and consider the element $h-m\theta\in \mf t\oplus\C h$. The eigenvalue of this element gives rise to a $\Z$-gradation on $\g_\chi=\oplus_{i\in\Z}\g_\chi(i)$, where we recal that $h$ acts by the Kazhdan grading. That is
\begin{align}\label{new:grading}
    \g_\chi(i):=\{x\in\g_{\chi} \mid [h-m\theta,x]=(i-2)x\}.
\end{align}
Let
\begin{align}
    \label{eq:mchi2}
    \widetilde{\mf m}:=\underline{\mf m}_\chi+\mf u,
\end{align}
where $\underline{\mf m}_\chi$ is the Levi $\mf l$ analogue of $\mf m_\chi$ in \eqref{eq:mchi} for $\g$. Observe that, since $\theta\in \mf t$, the subspace $\widetilde{\mf m}\cap V$ is Langragian in $V$. Again, if $\theta=0$, this is just the Kazhdan grading, and $\widetilde{\mf m}=\mf m_\chi$.

Let $\C^\times\rightarrow T\times \C^\times$ be the diagonal embedding such that the pull-back $\C^\times$-action induces the grading \eqref{new:grading}, i.e., the differential of the embedding at $1$ equals $(h,-m\theta)$.  The following theorem follows from \cite[Lemma 3.7]{SX20}, adapted to our new $\C^\times$-equivariant setting and \cite[Proposition 5.1]{Los12}. We observe that the theorem below in the special case $\theta=0$ is precisely \cite[Theorem 1.7]{SX20}.

\begin{thm}
\label{thm:catOW}
The new $\C^\times$-equivariant isomorphism induced from the one in \eqref{isoSSS} restricts to an isomorphism of the corresponding $\C^\times$-finite parts. This $\C^\times$-finite part isomorphism in turn extends uniquely to an isomorphism of algebras
    \begin{align}
      \label{isoUUH}
    U(\g)_{\widetilde{\mf m}}^{\wedge}\cong  U(\g,e)_{\widetilde{\mf m}\cap \g_e}^{\wedge} \otimes {\bf A}(V)_{\widetilde{\mf m}\cap V}^{\wedge},
    \end{align}
    where ${\bf A}(V)$ is the Weyl superalgebra of the supersympectic space $V$.
\end{thm}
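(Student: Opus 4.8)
The plan is to deduce the theorem from the quantum super Darboux--Weinstein isomorphism \eqref{isoSSS} of Theorem~\ref{thm:qDW} by taking $\C^\times$-finite vectors for a well-chosen one-parameter subgroup, following \cite[Proposition 5.1]{Los12} in the reductive case and \cite[Lemma 3.7]{SX20} in general; the only new ingredient is that we use the full $T\times\C^\times$-equivariance of \eqref{isoSSS} rather than merely its $\C^\times$-equivariance, which is exactly what allows $\theta\ne 0$.

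First I would use the one-parameter subgroup already singled out above, namely the diagonal embedding $\C^\times\to T\times\C^\times$ with differential $(h,-m\theta)$ at $1$, whose pull-back action induces the grading \eqref{new:grading}. Since \eqref{isoSSS} is $T\times\C^\times$-equivariant it is in particular equivariant for this $\C^\times$; and because both the isomorphism and its inverse are equivariant and continuous, it carries $\C^\times$-locally finite vectors to $\C^\times$-locally finite vectors on either side. This gives the first assertion: the induced isomorphism restricts to an isomorphism of the corresponding $\C^\times$-finite parts.

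Next I would identify these $\C^\times$-finite parts after setting $\hbar=1$, which is legitimate by the continuity conditions on the star products recalled in \S\ref{sec:decomp:thm}. The key is a grading computation: the hypothesis $m>2d+2$ forces $\mf u$ into the strictly negative part of \eqref{new:grading} (its $\ad\,h$-contribution being bounded by $d$ on each $\ad\,\theta$-eigenspace and hence dominated by $-m\theta$), while $\underline{\mf m}_\chi$ contributes the non-positive part of $\fl$ together with a Lagrangian half $\underline{l}$ of the single intermediate piece $\g_\chi(1)=\fl\cap\g(-1)$; thus $\widetilde{\mf m}$ is precisely the ``$\mf m$-subalgebra'' attached to the grading \eqref{new:grading}. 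As the Darboux--Weinstein splitting $\g_\chi=\g_e\oplus V$ is one of graded subspaces, it follows that $\widetilde{\mf m}=(\widetilde{\mf m}\cap\g_e)\oplus(\widetilde{\mf m}\cap V)$, with $\widetilde{\mf m}\cap V$ Lagrangian in $V$ (as already observed below \eqref{eq:mchi2}) and $\widetilde{\mf m}\cap\g_e$ the negatively graded part of $\g_e$. Granting this, the analysis of Example~\ref{ex:even:skryabin}, now in its general Lagrangian form, identifies the $\hbar=1$ specialization of the $\C^\times$-finite part of $S(\g)^\wedge_\chi[[\hbar]]$ with $U(\g)^\wedge_{\widetilde{\mf m}}$; and, $\g_e$ and $V$ being graded independently and Poisson-commuting, the $\C^\times$-finite part of the completed tensor product on the right of \eqref{isoSSS} is the tensor product of the $\C^\times$-finite parts of the two factors, namely $U(\g,e)^\wedge_{\widetilde{\mf m}\cap\g_e}$---using the realization of $U(\g,e)$ as $S(\g_e)[[\hbar]]/(\hbar-1)$ from Example~\ref{ex:even:skryabin}---and the completion $\mathbf A(V)^\wedge_{\widetilde{\mf m}\cap V}$ of the Weyl superalgebra of $V$ obtained by specializing the Moyal--Weyl star product. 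The passage from this isomorphism of $\C^\times$-finite parts to the isomorphism of completions \eqref{isoUUH}, together with its uniqueness, is then formal, each $\C^\times$-finite part being dense in the corresponding completion.

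I expect the main obstacle to be this middle step, namely checking with full care that for the grading \eqref{new:grading} the $\C^\times$-finite part of the $\chi$-adically completed $S(\g)$ is \emph{precisely} $U(\g)^\wedge_{\widetilde{\mf m}}$---equivalently, that the $\chi$-adic and $\widetilde{\mf m}$-adic topologies interact as required---and, similarly, pinning down the two factors on the right. This is essentially the content of \cite[Lemma 3.7]{SX20} combined with \cite[Proposition 5.1]{Los12}, so the real work is to verify that those proofs go through once the Kazhdan $\C^\times$ is replaced by the $(h,-m\theta)$-$\C^\times$ and the extra $T$-equivariance is tracked throughout---which is precisely the situation that the $T\times\C^\times$-equivariant statement of Theorem~\ref{thm:qDW} was arranged to handle.
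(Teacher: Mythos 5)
Your proposal is correct and takes essentially the same route as the paper: the authors give no proof beyond the remark that the theorem "follows from [SX20, Lemma 3.7], adapted to our new $\C^\times$-equivariant setting, and [Los12, Proposition 5.1]," i.e., exactly the strategy you describe of taking $\C^\times$-finite parts for the diagonal one-parameter subgroup with differential $(h,-m\theta)$, using the $T\times\C^\times$-equivariance of Theorem~\ref{thm:qDW}, and passing to $\hbar=1$ and completions. Your grading computation identifying $\widetilde{\mf m}$ with the ``$\mf m$-subalgebra'' of the grading \eqref{new:grading} under the hypothesis $m>2d+2$ is a correct and welcome elaboration of a step the paper leaves implicit.
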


\begin{rem}\label{rem::wl=w0}
Following \cite[(5.6)]{Los12} we can take the subalgebras of non-negative $\ad\,\theta$-eigenspaces divided by their left ideals generated by positive $\ad\,\theta$-eigenspaces in both sides of the isomorphism \eqref{isoUUH}. Since the subspace consisting of $\ad\,\theta$-positive eigenspaces is a two-sided ideal in the algebra of non-negative $\ad\,\theta$-eigenspaces, we see that these left ideals are indeed two-sided ideals, and hence the quotients are indeed algebras. Thus, we obtain an isomorphism of algebras
    \begin{align*}
        U(\mf l)_{\underline{\mf m}}^{\wedge}\cong  U(\g,e)_{\ge 0}/U(\g,e)_{\#}  \otimes {\bf A}(V\cap\mf l)_{\underline{\mf m}}^{\wedge}.
    \end{align*}
    Now dividing by the left ideal generated by $\underline{\mf m}_\chi$ and then taking $\underline{\mf m}$-invariants on both sides, we obtain the isomorphism \eqref{eqn:iso:1}.
\end{rem}

\subsection{Categories of generalized Whittaker modules}\label{subsec:genWH}

We continue the setup in \S\ref{subsec:setup} with  $\h \subset \fl \subset \g$ and an $\mf{sl}(2)$-triple $\{e,h,f\}$. Since $h\in \h$ and $\h \subset \fl$ by \eqref{eq:udecomp}, $\ad\, h$ preserves $\fl$. Thus $\fl$ inherits a $\Z$-gradation from the Dynkin grading (with respect to $\ad\,h$) $\g=\oplus_{j\in \Z}\g(j)$:
\begin{align*}
    \mf l=\oplus_{j\in\Z} \mf l(j),\qquad \text{ where } \mf l(j):=\mf l\cap \g(j).
\end{align*}
Recall from \eqref{eq:el} that $e\in\mf l$. From now on, we further assume that
\begin{align}
\label{eq:eLevi}
    e \text{ is of standard Levi type},
\end{align}
that is, it is principal nilpotent in a Levi subalgebra of $\mf l$ (or of $\g$).
%We note that in the original setting of \cite{BGK08}, the element $e$ is principal nilpotent in $\mf l$, which corresponds to the case when $\theta$ is regular in $\mf t$.

As in \S\ref{subsec:finiteW} (with $\fl$ in place of $\g$), we have the finite $W$-algebra $U(\mf l,e)$ associated to the nilpotent element $e\in\mf l$. By Skryabin equivalence in Proposition~ \ref{prop:sk}, the category of $U(\mf l,e)$-modules is equivalent to the category of $U(\mf l)$-modules on which a corresponding subalgebra $\underline{\mf m}$ in $\mf l$ transforms by the character $\chi$, i.e., the algebra $\underline{\mf m}_\chi=\{x-\chi(x)|x\in\underline{\mf m}\}$ acts locally nilpotently.

\begin{definition}
A finitely generated $U(\g)$-module is called a generalized Whittaker module corresponding to the pair $(\theta, e)$, if the subalgebra $\underline{\mf m}_\chi+\mf u$ acts locally nilpotently.
\end{definition}
Denote the category of generalized Whittaker $\g$-modules by $\widetilde{\W}(\theta,e)$, cf. \cite[Section 4]{Los12}.

Given a finite-dimensional $U(\mf l,e)$-module $V$, we have by Proposition~ \ref{prop:sk} a $U(\mf l)$-module $\texttt{Sk}(V)$, which we can extend trivially to a $U(\mf p)$-modules and then parabolically induce to a $U(\g)$-module
\begin{align*}
    M^{\theta,e}(V):=\text{Ind}_{\mf p}^\g \texttt{Sk}(V).
\end{align*}
Then $M^{\theta,e}(V)$ is a generalized Whittaker module associated with $(\theta,e)$. In this way, $M^{\theta,e}$ defines a functor from finitely generated $U(\mf l,e)$-modules to $\widetilde{\W}(\theta,e)$.
On the other hand, given a generalized Whittaker $\g$-module $M$ we can define $F(M):=\{m\in M\mid xm=0,\forall x\in\underline{\mf m}_\chi+\mf u\}$. Then $F(M)$ is a $U(\mf l,e)$-module. The functor $F$ is right adjoint to $M^{\theta,e}$.

The subcategory $\W(\theta,e)$ of $\widetilde{\W}(\theta,e)$ is the category of generalized Whittaker modules $M$ associated with $(\theta, e)$ such that $\dim F(M)<\infty$. Furthermore, we define $\W^{\mf r}(\theta,e)$ to be the full subcategory of $\W(\theta,e)$ on which $\mf r$ acts semisimply. In the superalgebra setting, it is convenient to consider the full subcategory $\W'(\theta,e)$ of $\W(\theta,e)$ consisting of objects $M$ such that $\dim M^{\mf u_{\bar 0}+\underline{\mf m}_\chi}<\infty$; see Lemma~\ref{lem:MMS=N} below in the case when $\theta$ is regular in $\mf t$ (i.e., $e$ is a principal nilpotent element in the Levi subalgebra $\mf l$).

In the case when $\theta$ is regular in $\mf t$, we shall sometimes drop $\theta$, i.e., we write $M^e(V)$, $\widetilde{\W}(e)$, $\W(e)$, $\W'(e)$ etc.

\begin{lem} \label{lem:MMS=N}
Suppose that $e$ is principal nilpotent in $\mf l$, i.e., $\theta\in\mf t$ is regular.
\begin{enumerate}
    \item
Let $M\in\widetilde{\W}(e)$. Then $\left(M^{\mf u_{\bar 0}}\right)^{\underline{\mf m}_\chi}$ is finite dimensional if and only if  $M\in\MS(\zeta)$.
\item We have an equivalence of categories $\MS(\zeta)\cong\W'(e)$.
\end{enumerate}
\end{lem}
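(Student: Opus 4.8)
The plan is to prove part (1) first, since part (2) will then follow quickly by combining it with Skryabin's equivalence. For part (1), recall that when $\theta\in\mf t$ is regular, $e$ is principal nilpotent in $\mf l$, so the subalgebra $\underline{\mf m}$ built from $\mf l$ coincides (up to the Lagrangian choice) with the odd-degree corrections of $\mf n\cap\mf l$, and the character $\chi$ restricted to $\mf l$ is the principal Whittaker character, which matches $\zeta$ on $\Pi_\fl$ by \eqref{eq:zeta}. The key point is that $\widetilde{\mf m}=\underline{\mf m}_\chi+\mf u$ and the condition ``$(M^{\mf u_{\bar 0}})^{\underline{\mf m}_\chi}$ finite dimensional'' should be translated, via the Kazhdan/$\mathrm{ad}\,\theta$ grading, into the two MMS conditions: (a) $x-\zeta(x)$ acts locally nilpotently for $x\in\mf n$, and (b) $Z(\g_{\bar 0})$ acts locally finitely. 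First I would observe that membership in $\widetilde{\W}(e)$ already gives local nilpotence of $\underline{\mf m}_\chi+\mf u$; since $\mf n = (\mf n\cap\mf l)+\mf u$ and $\underline{\mf m}$ differs from $\mf n\cap\mf l$ only in the $\mathrm{ad}\,h$-degree-$(-1)$ part (which is handled by the Lagrangian/dual decomposition of $V\cap\mf l$ and finite generation), one deduces that $M\in\widetilde{\W}(e)$ is automatically locally $\mf n$-nilpotent for the character $\zeta$; so the real content is the equivalence between the finiteness of $(M^{\mf u_{\bar 0}})^{\underline{\mf m}_\chi}$ and the local finiteness of $Z(\g_{\bar 0})$.

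For that equivalence I would argue as follows. Take $\mf u_{\bar 0}$-invariants: $M^{\mf u_{\bar 0}}$ is naturally a module over $\mf l_{\bar 0}$ (more precisely over $U(\mf l_{\bar 0})$ enlarged by the normalizer data), and then taking $\underline{\mf m}_\chi$-invariants inside it lands us, via Skryabin for the even Levi $\mf l_{\bar 0}$ (with $e$ principal there), in the category of finite-length modules over the finite $W$-algebra $U(\mf l_{\bar 0},e)$, which is $Z(\mf l_{\bar 0})$. Finite-dimensionality of this invariant space is then equivalent to $Z(\mf l_{\bar 0})$ — hence, by Harish-Chandra and the $\mf u$-nilpotency, $Z(\g_{\bar 0})$ — acting locally finitely on $M$. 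In the forward direction one uses that a Whittaker module in $\MS(\zeta)$ has, by \cite{Kos78} applied to $\mf l$ and parabolic induction, its space of $\underline{\mf m}_\chi$-Whittaker vectors in the $\mf u_{\bar 0}$-invariants finite-dimensional; in the reverse direction one uses that finiteness of this space forces $M$ to be finitely generated over $U(\g)$ with locally finite $Z(\g_{\bar 0})$-action (this is where one invokes that $M\in\widetilde{\W}(e)$ is finitely generated by definition). I would cite \cite[Section 4]{Los12} for the Lie algebra template and \cite{SX20, Ch21, CCM23} for the superalgebra adaptations.

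For part (2), I would use Skryabin's equivalence $\texttt{Sk}$ (Proposition~\ref{prop:sk}) for the Levi $\mf l$: the category of finite-dimensional $U(\mf l,e)$-modules is equivalent, via $\texttt{Sk}$, to the category of $U(\mf l)$-modules on which $\underline{\mf m}_\chi$ acts locally nilpotently and which are locally finite over $Z(\mf l)$ — but by \cite{Kos78} (as recalled in \S\ref{subsec:MMS}) the latter is exactly the category built from Kostant's modules $K(\mf l;\la,\zeta)$, i.e.\ the ``Levi MMS category'' for $\mf l$. Then the parabolic induction functor $M^{e}(-)=\mathrm{Ind}_{\mf p}^\g\circ\texttt{Sk}(-)$ and the invariants functor $F$ restrict, by part (1), to mutually quasi-inverse (adjoint) functors between $\MS(\zeta)$ and $\W'(e)$: the standard Whittaker module $M(\la,\zeta)$ of \S\ref{subsec:MMS} is literally $M^e(K(\mf l;\la,\zeta))$, simples go to simples by \eqref{eq:LMWiso} and \cite[Corollary 4.11]{BGK08}, and the adjunction unit/counit are isomorphisms because both are isomorphisms after applying the exact faithful functor $F$ (using finiteness from part (1) to stay in finite-length situations). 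The main obstacle I anticipate is the bookkeeping in part (1): carefully matching $\underline{\mf m}$ against $\mf n\cap\mf l$ through the $\mathrm{ad}\,h$ and $\mathrm{ad}\,\theta$ gradings and the choice of Lagrangian, and showing that the ``$\mf u_{\bar 0}$-invariants then $\underline{\mf m}_\chi$-invariants'' operation genuinely computes the $U(\mf l_{\bar 0},e)$-module of Whittaker vectors rather than something larger — i.e.\ the interchange of taking invariants with parabolic induction. Everything else is a formal consequence of Skryabin's equivalence and the results already established in Sections~\ref{sec:WH}--\ref{sec:Walgebra}.
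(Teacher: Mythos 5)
Your part (1) is in the same spirit as the paper's argument, but you make it harder than it needs to be. Since $\fl$ is even by assumption, both $\mf u_{\oa}$ and $\underline{\mf m}_\chi$ live in $\g_{\oa}$, and the second MMS condition is a condition on $Z(\g_{\oa})$; so the whole of part (1) is a statement about the Noetherian $\g_{\oa}$-module $\Res^{\g}_{\g_{\oa}}M$, and the paper simply applies \cite[Lemma 4.2]{Los12} to it verbatim (in both directions). You instead sketch a re-derivation of that lemma via Skryabin for $\fl_{\oa}$ and Harish-Chandra; the crucial assertion ``finite-dimensionality of $(M^{\mf u_{\oa}})^{\underline{\mf m}_\chi}$ is equivalent to $Z(\g_{\oa})$ acting locally finitely'' is exactly the content of Losev's lemma and is left unproved in your sketch. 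Since you do cite \cite[Section 4]{Los12}, this is repairable, but the clean move is the restriction to $\g_{\oa}$, which you never quite state.

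Part (2) as written is wrong. The functors $M^{e}=\mathrm{Ind}_{\mf p}^{\g}\circ\texttt{Sk}$ and $F$ do not go between $\MS(\zeta)$ and $\W'(e)$: $F$ lands in $U(\fl,e)$-modules and $M^{e}$ starts from them, so they cannot be ``mutually quasi-inverse functors between $\MS(\zeta)$ and $\W'(e)$.'' Moreover the adjunction unit/counit are genuinely not isomorphisms: $M^{e}(F(L(\la,\zeta)))\cong M(\la,\zeta)$, which is not $L(\la,\zeta)$ in general, and if your claim were true it would force $\MS(\zeta)$ to be equivalent to the category of finite-dimensional $U(\fl,e)$-modules, which is absurd (e.g.\ $\MS(\zeta)$ has standard objects of length $>1$). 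The point you are missing is that part (2) requires no construction at all: both $\MS(\zeta)$ and $\W'(e)$ are full subcategories of $\widetilde{\W}(e)$ (for $\MS(\zeta)$ this uses that $\widetilde{\mf m}=\underline{\mf m}_\chi+\mf u$ is identified with $\{x-\zeta(x)\mid x\in[\mf b,\mf b]\}$ when $e$ is principal in $\fl$), the defining condition of $\W'(e)$ is precisely the finiteness of $(M^{\mf u_{\oa}})^{\underline{\mf m}_\chi}$, and part (1) says this condition cuts out exactly the objects of $\MS(\zeta)$. Hence the two categories coincide, and the ``equivalence'' is the identity.
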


\begin{proof}
    When $\g$ is a Lie algebra it is shown in \cite[Lemma 4.2]{Los12} that $M\in\MS(\zeta)$ if and only if $\dim F(M)<\infty$.

Suppose that $\g$ is a basic classical Lie superalgebra and $M\in{\W}'(e)$. By definition $\left(M^{\mf u_\oa}\right)^{\underline{\mf m}_\chi}$ is finite dimensional. Thus, by \cite[Lemma 4.2]{Los12} the Noetherian $\g_\oa$-module $\text{Res}^\g_{\g_{\bar 0}} M$ lies in the MMS category of Whittaker modules of $\g_\oa$, and hence is $Z(\g_\oa)$-finite. Therefore, it lies in $\MS(\zeta)$.

Conversely, if $M\in\MS(\zeta)$, then, by definition, the Noetherian $\g_\oa$-module $\text{Res}^\g_{\g_\oa} M$ is $Z(\g_\oa)$-finite, and hence by \cite[Lemma 4.2]{Los12} again, $\left(M^{\mf u_{\bar 0}}\right)^{\underline{\mf m}_\chi}$ is finite dimensional. Hence $M\in \W'(e)$.
\end{proof}

\subsection{Several category equivalences}

Now we shall interpret the Losev-Shu-Xiao decomposition in Theorem \ref{thm:catOW} from the viewpoints of Whittaker modules and $U(\g,e)$-modules.

Recall from parabolic decomposition determined by an integral element $\theta$: $\g=\mf u^-\oplus\mf l\oplus\mf u$.  Recall the subalgebra $\widetilde{\mf m}=\underline{\mf m}_\chi+\mf u$ from Theorem \ref{thm:catOW}. So the category of $U(\g)^{\wedge}_{\widetilde{\mf m}}$-modules above is precisely $\widetilde{\mf{Wh}}(\theta,e)$.

On the other hand, we have $\widetilde{\mf m}\cap \g_e=\mf u\cap\g_e$. Also, recall that for $\alpha\in\Phi$ we have $\alpha(\theta)>0$ if and only if $\alpha$ is a root in $\mf u$. Therefore, the category of finitely generated $U(\g,e)_{\widetilde{\mf m}\cap\g_e}$-modules consists precisely of $U(\g,e)$-modules on which $U(\g,e)_{>0}$ acts locally nilpotently, i.e., the category $\widetilde{\mc O}(\theta,e)$ in \S\ref{sec:cat:Wmod}.

Finally, we have already observed earlier that the space $\widetilde{\mf m}\cap V$ is a Lagrangian subspace of $V$. Hence ${\bf A}(V)$ is just the Weyl superalgebra of $(\widetilde{\mf m}\cap V) \oplus (\widetilde{\mf m}\cap V)^*$. Thus, the category of ${\bf A}(V)_{\widetilde{\mf m}\cap V}$-modules is semisimple with a unique simple object.

Thus we have obtained a functor
\begin{align}   \label{eq:equivK}
\mc K: \widetilde{\W}(\theta,e)
\longrightarrow \widetilde{\mc O}(\theta,e),
\end{align}
which is an equivalence.

Recall the subcategory ${\mc O}(\theta,e) \subset \widetilde{\mc O}(\theta,e)$ from \S\ref{sec:cat:Wmod} and the subcategory $\W(\theta,e) \subset \widetilde{\mf{Wh}}(\theta,e)$ from \S\ref{subsec:genWH}. By restriction, we obtain a category equivalence
\begin{align}   \label{eq:equivK2}
\W(\theta,e) \stackrel{\cong}{\longrightarrow} {\mc O}(\theta,e).
\end{align}

Now consider the case when $e$ is principal nilpotent in $\mf l$. Up to conjugation we can assume that $e=\sum_{\alpha\in\Pi_\fl}E_{-\alpha}$, where the simple root vectors are normalized so that $(E_{-\alpha}|E_\beta)=\delta_{\alpha,\beta}$, for even simple roots $\alpha, \beta$. Since $e$ is principal nilpotent in $\mf l$, we see that the algebra $\widetilde{\mf m}=\underline{\mf m}+\mf u$ in this case is a maximal nilpotent subalgebra of $\g$, and so $\mf b:=\mf h+\widetilde{\mf m}$ is a Borel subalgebra of $\g$ with simple system $\Pi$. Note that for $\alpha\in\Pi$, we have that  $\zeta(E_\alpha):=(e|E_\alpha)=1$, if and only $\alpha \in \Pi_\fl$. This determines a unique character $\zeta$ of the nilradical $[\mf b,\mf b]\subseteq\mf b$ and hence the category of $U(\g)^{\wedge}_{\widetilde{\mf m}}$ in this case is the category $\g$-modules on which $x-\zeta(x)$ acts locally nilpotently for all $x\in[\fb,\fb]$. In particular, if we further restrict to the subcategory of $Z(\g_\oa)$-finite modules, then we obtain precisely the category of Whittaker modules $\MS(\zeta)$, considered in \cite{McD85, MS97} in the case when $\g$ is a Lie algebra, and in \cite{Ch21} in the case when $\g$ is a Lie superalgebra.

We define $\mc O'(\theta,e)$ to be the full subcategory of $\mc O(\theta,e)$ which is the image category of $\MS(\zeta)$ under the equivalence $\mc K$. Summarizing we have obtained the following.

\begin{thm}
\label{thm:isoK}
    We have the following commutative diagram:
\begin{align}
  \label{equiv:mcK}
\xymatrix{
\MS(\zeta)
\ar[d]^{\cong}
 \ar@{^{(}->}[r]
& \W(\theta,e)
\ar[d]^{\cong}
 \ar@{^{(}->}[r]
 & \widetilde{\W}(\theta,e)
\ar[d]_{\mc K}^{\cong}
 \\
\mc O'(\theta,e)
 \ar@{^{(}->}[r]
 & {\mc O}(\theta,e)
  \ar@{^{(}->}[r]
& \widetilde{\mc O}(\theta,e)
}
\end{align}
\end{thm}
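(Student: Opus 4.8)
The plan is to assemble the commutative diagram \eqref{equiv:mcK} from the pieces already in place, so that almost no new work is needed beyond bookkeeping. The three vertical arrows and two squares will be read off from the identifications made immediately before the statement. First I would record that the functor $\mc K$ in \eqref{eq:equivK} has already been produced and shown to be an equivalence: it comes from the isomorphism \eqref{isoUUH} of Theorem~\ref{thm:catOW} together with the observations that $\widetilde{\mf m}\cap \g_e = \mf u\cap\g_e$ (so the $U(\g,e)^\wedge_{\widetilde{\mf m}\cap\g_e}$-module category is $\widetilde{\mc O}(\theta,e)$), that $\widetilde{\mf m}\cap V$ is Lagrangian in $V$ (so the Weyl-superalgebra factor ${\bf A}(V)^\wedge_{\widetilde{\mf m}\cap V}$ has a semisimple module category with a unique simple object), and that the $U(\g)^\wedge_{\widetilde{\mf m}}$-module category is precisely $\widetilde{\W}(\theta,e)$. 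This gives the right-hand vertical equivalence.

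Next I would handle the middle vertical arrow. By definition, $\W(\theta,e)\subset\widetilde{\W}(\theta,e)$ is the subcategory of $M$ with $\dim F(M)<\infty$, while ${\mc O}(\theta,e)\subset\widetilde{\mc O}(\theta,e)$ is the subcategory of $M$ with $\dim\mc F(M)<\infty$. I would check that $\mc K$ intertwines $F$ with $\mc F$, which is essentially built into the construction: taking invariants under $\widetilde{\mf m}\cap V$ on the Weyl-algebra factor is harmless (the unique simple is one-dimensional up to the Lagrangian), and the remaining invariants $M^{\underline{\mf m}_\chi+\mf u}=F(M)$ match $\mc F$ of the image, which is the space annihilated by $U(\g,e)_{>0}$ and regarded as a $U(\mf l,e)=U(\g,e)_{\ge0}/U(\g,e)_\#$-module via \eqref{eqn:iso:1}. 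Hence $\mc K$ restricts to the equivalence \eqref{eq:equivK2}, giving the middle square with all three relevant arrows (the two horizontal inclusions and $\mc K$).

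Finally, for the left-hand vertical arrow I specialize to $e$ principal nilpotent in $\mf l$, i.e.\ $\theta$ regular in $\mf t$. Here $\widetilde{\mf m}=\underline{\mf m}+\mf u$ is a maximal nilpotent subalgebra, $\mf b=\h+\widetilde{\mf m}$ a Borel, and $\zeta$ the character of $[\mf b,\mf b]$ from \eqref{eq:zeta}; the $Z(\g_\oa)$-finiteness cut imposed in the definition of $\MS(\zeta)$ is precisely captured by Lemma~\ref{lem:MMS=N}, which identifies $\MS(\zeta)$ with $\W'(e)$, the subcategory of $\W(\theta,e)$ cut out by $\dim M^{\mf u_\oa+\underline{\mf m}_\chi}<\infty$. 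I then \emph{define} $\mc O'(\theta,e)$ to be the image of $\MS(\zeta)$ under the equivalence $\mc K$ restricted along \eqref{eq:equivK2}; this makes the left-hand vertical arrow an equivalence by fiat and the left square commute by construction. Commutativity of the whole diagram is then just transitivity of ``restriction of $\mc K$'' along the nested inclusions $\MS(\zeta)\hookrightarrow\W(\theta,e)\hookrightarrow\widetilde{\W}(\theta,e)$. The only point requiring genuine care — the main obstacle — is verifying that $\mc K$ does intertwine the functors $F$ and $\mc F$ compatibly enough that the finiteness conditions ($\dim F(M)<\infty$ versus $\dim\mc F(M)<\infty$) correspond under the equivalence; once the tensor-factorization \eqref{isoUUH} is unwound with the Lagrangian $\widetilde{\mf m}\cap V$ taken into account, this reduces to the matching of $\underline{\mf m}$-invariants already implicit in Remark~\ref{rem::wl=w0} and in \cite[\S5]{Los12}, but it should be spelled out explicitly.
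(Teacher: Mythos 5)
Your proposal is correct and follows essentially the same route as the paper: the right-hand equivalence $\mc K$ is read off from Theorem~\ref{thm:catOW} via the identifications of the three tensor factors, the middle equivalence is obtained by restriction (using that $\mc K$ intertwines $F$ with $\mc F$, which the paper also invokes in the proof of Corollary~\ref{cor::verma}), and the left-hand equivalence holds by the definition of $\mc O'(\theta,e)$ as the image of $\MS(\zeta)$ together with Lemma~\ref{lem:MMS=N}. Your explicit flagging of the $F$-versus-$\mc F$ compatibility is a reasonable elaboration of what the paper compresses into ``the remaining two equivalences follow by restriction.''
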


\begin{proof}
    The equivalence $\mc K$ follows from the isomorphism of superalgebras in Theorem~ \ref{thm:catOW} and the discussion above. The remaining two equivalences follow by restriction.
\end{proof}

\begin{rem}
When $\g$ is a reductive Lie algebra, the equivalence \eqref{eq:equivK2} was first conjectured in \cite[Conjecture 5.3]{BGK08}, and then established, along with \eqref{eq:equivK}, in \cite[Theorem 4.1]{Los12}. In this case, $\MS(\zeta)\cong\W(\theta,e)$; see Lemma~\ref{lem:MMS=N}.
\end{rem}

\begin{rem}
Setting $\theta=0$, $\mc K$ gives Skryabin equivalence for finite $W$-superalgebras associated with a general not necessarily even grading \cite[Theorem 4.1]{SX20}, which extends the case considered in Example \ref{ex:even:skryabin}.
\end{rem}

\begin{cor}\label{cor::verma} Assume that $\theta$ is regular so that $e$ is principal nilpotent in $\mf l$.
The $U(\g,e)$-Verma module lies in $\mc O'(\theta,e)$, and so does its irreducible quotient.
\end{cor}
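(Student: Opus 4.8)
The plan is to identify each $U(\g,e)$-Verma module, via the equivalence $\mc K$ of Theorem~\ref{thm:isoK}, with a standard Whittaker module $M(\la,\zeta)\in\MS(\zeta)$; membership in $\mc O'(\theta,e)$ then follows at once, since $\mc O'(\theta,e)$ is by definition the $\mc K$-image of $\MS(\zeta)$.

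Since $\theta$ is regular, $e$ is principal nilpotent in $\fl$, so by Kostant the irreducible $U(\fl,e)$-modules are one-dimensional, and (as recalled in \S\ref{sec:cat:Wmod}) a $U(\g,e)$-Verma module is exactly $\mc M^{\theta,e}(V)=U(\g,e)\otimes_{U(\g,e)_{\ge0}}V$ for a one-dimensional $V$, viewed as a $U(\g,e)_{\ge0}$-module through \eqref{eqn:iso:1}. Writing $\chi_\la^{\fl}$ for the central character of $\fl$ afforded by $V$, Skryabin's equivalence $\texttt{Sk}$ for $\fl$ sends $V$ to Kostant's simple Whittaker $\fl$-module $K(\fl;\la,\zeta)$: indeed $\underline{\mf m}_\chi$ provides precisely the defining relations of $\C_\zeta$ over $\n\cap\fl$, and the effect of $V$ is to quotient further by $\ker\chi_\la^{\fl}$. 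Hence the generalized Whittaker induction of \S\ref{subsec:genWH} gives $M^{\theta,e}(V)=\Ind_{\mf p}^{\g}\texttt{Sk}(V)=\Ind_{\mf p}^{\g}K(\fl;\la,\zeta)=M(\la,\zeta)$, which lies in $\MS(\zeta)$.

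The key step is to show that $\mc K$ intertwines the two parabolic-induction functors, i.e.\ $\mc K\circ M^{\theta,e}\cong\mc M^{\theta,e}$ on finite-dimensional $U(\fl,e)$-modules. Following the argument of \cite[Theorem~4.1]{Los12}, I would instead establish the corresponding statement for the right adjoints, namely $\mc F\circ\mc K\cong F$ as functors into $U(\fl,e)\mod$: both $F$ and $\mc F$ are ``invariants'' functors --- $F(M)=M^{\underline{\mf m}_\chi+\mf u}$ and $\mc F(N)=N^{U(\g,e)_{>0}}$ --- and these match under the Losev--Shu--Xiao isomorphism \eqref{isoUUH} because $\widetilde{\mf m}\cap\g_e=\mf u\cap\g_e$ and, as spelled out in Remark~\ref{rem::wl=w0}, \eqref{isoUUH} restricts on the non-negative $\ad\theta$-eigenspaces (modulo the positive part) to the $\fl$-level decomposition underlying $\texttt{Sk}$ for $\fl$. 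Uniqueness of left adjoints then upgrades $\mc F\circ\mc K\cong F$ to $\mc K\circ M^{\theta,e}\cong\mc M^{\theta,e}$. Combining with the previous paragraph, $\mc M^{\theta,e}(V)\cong\mc K\bigl(M(\la,\zeta)\bigr)\in\mc O'(\theta,e)$.

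Finally, for the irreducible quotient: $\MS(\zeta)$ is closed under subquotients inside the (abelian) category of finitely generated $U(\g)$-modules, so its $\mc K$-image $\mc O'(\theta,e)$ is closed under subquotients inside $\mc O(\theta,e)$; since $\mc M^{\theta,e}(V)$ has a unique irreducible quotient $\mc L^{\theta,e}(V)$ (see \cite[Corollary~4.11]{BGK08}), that quotient again lies in $\mc O'(\theta,e)$ --- concretely, it is $\mc K$ applied to the simple head $L(\la,\zeta)$ of $M(\la,\zeta)$. The one step I expect to require genuine care is the functorial intertwining $\mc F\circ\mc K\cong F$ in the super setting: it amounts to tracking the parabolic compatibility of \eqref{isoUUH} through the tensor decomposition defining $\mc K$, which is the content of the arguments of \cite{Los12} transported through Theorem~\ref{thm:catOW} and Remark~\ref{rem::wl=w0}.
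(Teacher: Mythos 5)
Your proposal is correct and follows essentially the same route as the paper: the paper's proof likewise establishes the isomorphism of right adjoints $\mc F\circ\mc K\cong F$ via Theorem~\ref{thm:catOW}, deduces $\mc K\circ M^{\theta,e}\cong\mc M^{\theta,e}$ by uniqueness of left adjoints, and then applies this to the one-dimensional irreducible $U(\fl,e)$-module to identify Verma modules with images of standard Whittaker modules, which lie in $\MS(\zeta)$. Your additional explicit identification of $\texttt{Sk}(V)$ with Kostant's module $K(\fl;\la,\zeta)$ and your closure-under-quotients argument for the simple head are consistent with, and slightly more detailed than, what the paper records.
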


\begin{proof}
    For $M\in\widetilde{\W}(\theta,e)$, we have the following isomorphism of $U(\mf l,e)$-modules by Theorem \ref{thm:catOW}:
    \begin{align*}
        \mc F\left(\mc K(M)\right)\cong \mc G(M).
    \end{align*}
    Thus, these two functors are isomorphic, and hence so are their left adjoints $\mc K\circ M^{\theta,e}$ and $\mc M^{\theta,e}$. Taking $N$ to be the irreducible one-dimensional $U(\mf l,e)$-module, we conclude that
    \begin{align*}
        \mc K\big(M^{\theta,e}(N)\big)\cong \mc M^{\theta,e}(N).
    \end{align*}
    That is, the standard Whittaker modules of $\g$ under $\mc K$ correspond to the Verma modules of $U(\g,e)$. But we have seen earlier that the abelian category $\MS(\zeta)$ contains all standard Whittaker modules of $\g$.
\end{proof}

\begin{rem}
    \label{rem:GG}
    The Grothendieck groups of $\MS(\zeta)$ and $\W(\theta,e)$ coincide as these two categories have the same simple objects. It follows by Theorem~\ref{thm:isoK} that the Grothendieck groups of $\mc O'(\theta,e)$ and ${\mc O}(\theta,e)$ coincide as well.
\end{rem}

\begin{rem}\label{rem:comp:verma}
Let $\g$ be a basic classical Lie superalgebra and $e$ an even nilpotent element that is principal nilpotent in an even Levi subalgebra of $\g$. Suppose that the corresponding integral element $\theta$ is regular so that we have the corresponding category $\mc O(e)$ of $U(\g,e)$-modules. Let $\zeta$ be the nilcharacter corresponding to $e$, and consider the corresponding category $\mc{MS}(\zeta)$ of MMS Whittaker modules so that we have, as in Theorem~ \ref{thm:isoK}, $\mc{MS}(\zeta)\cong\mc O'(e)\subset\mc O(e)$. Now by \cite[Theorem~ 1]{CC24} (which extends \cite[Theorem~ 20]{Ch21}), the multiplicities of composition factors of standard Whittaker modules are given by the corresponding super Kazhdan-Luszig polynomials for the BGG category $\mc O$ of $\g$-modules.  Together with Theorem~ \ref{thm:isoK}, it now follows that these same Kazhdan-Lusztig polynomials also compute the multiplicities of composition factors of the Verma modules in $\mc O(e)$.  When $\g$ is a reductive Lie algebra such a relationship was first conjectured in \cite[Conjecture~ 5.3]{BGK08} and proved in \cite[Theorem~ 4.1]{Los10}. So, combining Theorem~ \ref{thm:isoK} with \cite[Theorem~ 1]{CC24} indeed establishes the analogue of the Brundan-Goodwin-Kleshchev-Losev result for basic classical Lie superalgebras.
\end{rem}

Let $\theta$ and $\theta'$ be compatible as in \S\ref{sec:cat:Wmod}. Assume that $\theta$ is regular so that $e$ is principal nilpotent in the Levi subalgebra $\mf l$ in the parabolic subalgebra determined by $\theta$. Then the abelian subcategory $\mc O'(\theta',e):=\mc O(\theta',e)\cap\mc O'(\theta,e)$ of $\mc O'(\theta,e)$ contains all Verma modules in $\mc O(\theta',e)$, since they are quotient of the Verma modules of $\mc O(\theta,e)$, which by Corollary \ref{cor::verma} are all contained in $\mc O'(\theta,e)$. Hence all simple objects in $\mc O(\theta',e)$ are also contained in $\mc O'(\theta',e)$. Under the equivalence of categories the corresponding category of generalized Whittaker modules, which we denote by $\W'(\theta',e)$, is a full subcategory of $\MS (\zeta)$. Namely, the category $\W'(\theta',e)$ is the full subcategory of $\MS(\zeta)$ consisting of modules $M$ with $\dim M^{\mf u'+\underline{\mf m}_\chi'}<\infty$, where $\mf u'$ and $\underline{\mf m}'_\chi$ are the respective subalgebras defined for the integral element $\theta'$. In particular when $\theta'=0$ so that $\mf u'=0$, the condition becomes $\dim M^{\mf m_\chi}<\infty$, i.e., the Whittaker modules in $\MS(\zeta)$ which, under the Skryabin equivalence, correspond to finite-dimensional modules of $U(\g,e)$.

\subsection{Super duality for finite $W$-algebras}

In this last subsection, we are back to the setting of \S\ref{sec:SD1}, where we introduced a pair of Lie algebra $\g_n$ and Lie superalgebra $\sg_n$, for $n\ge 1$.

Now, we can combine our results to establish a super duality between categories of modules over the finite $W$-algebra $U(\g_n,e)$ and $U(\sg_n,e)$ in the limit $n\to\infty$. Recall that $\mf{H}$ and $\ufk\cong\gl(n)$ denote the Lie algebras corresponding to the head and tail diagrams, respectively, of the Dynkin diagrams of $\g_n$ and $\sg_n$ that form a super duality pair when $n\to\infty$. We have that $e$ is of standard Levi type in $\mf{H}$, and hence in $\g_n$ and $\sg_n$.

\begin{lem}
    The algebra $U(\gl(n))$ is a subalgebra of both $U(\g_n,e)$ and $U(\sg_n,e)$, for $n\le\infty$.
\end{lem}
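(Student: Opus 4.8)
The plan is, for each of $\g=\g_n$ and $\g=\sg_n$ (and then for $n=\infty$), to write down an explicit injective algebra homomorphism $\jmath\colon U(\gl(n))\to U(\g,e)$, using the description $U(\g,e)=(U(\g)/I_\chi)^{\ad\,\mf m}$ and the fact that $\ufk\cong\gl(n)$ centralizes $e$.

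First I would pin down the relative position of $\ufk$ and the nilpotent. Since $e$ is of standard Levi type in $\mf H$, it is principal nilpotent in a Levi subalgebra $\mf l_e\subseteq\mf H$, and the $\mf{sl}(2)$-triple $\{e,h,f\}$ may be taken inside $\mf l_e\subseteq\mf H$. On the other hand $\ufk$ is the copy of $\gl(n)$ in the tail $\gl(1+n)$ (resp.\ $\gl(1|n)$), so its simple roots are exactly the tail simple roots other than the one joined to $\mf H^{\mf x}$; none of these is adjacent in the Dynkin diagram of $\g$ to a simple root of $\mf H$, whence $[\mf H,\ufk]=0$ in $\g$. In particular $\ufk$ commutes with $e,h,f$, so $\ufk\subseteq\g_e$ and $\ufk\subseteq\g(0)$ for the Dynkin grading $\g=\bigoplus_j\g(j)$. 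Consequently $\bigoplus_{j<-1}\g(j)$ is $\ad\,\ufk$-stable, and I would choose the Lagrangian $l\subseteq\g(-1)$ in \eqref{eq:spacem} to be $\mf t$-stable \emph{and} $\ufk$-stable, so that $\mf m=l\oplus\bigoplus_{j<-1}\g(j)$ satisfies $[\ufk,\mf m]\subseteq\mf m$.

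Granting this, define $\jmath$ on $\ufk$ by $x\mapsto x+I_\chi$. To see $x+I_\chi$ is $\ad\,\mf m$-invariant, note that for $m\in\mf m$ one has $[m,x]\in\mf m$, and its $\chi$-component vanishes: by invariance of the bilinear form,
\[
\chi([m,x]) = (e\,|\,[m,x]) = ([e,m]\,|\,x) = -([m,e]\,|\,x) = -(m\,|\,[e,x]) = 0,
\]
since $[e,x]=0$. Hence $[m,x]=[m,x]-\chi([m,x])\in\mf m_\chi\subseteq I_\chi$, as needed. As the product of $U(\g,e)$ is induced from that of $U(\g)$ and $\ufk$ is purely even, $\jmath$ is a homomorphism of the underlying Lie superalgebras and extends to an algebra map $U(\ufk)\to U(\g,e)$. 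For injectivity, compose with the inclusion $U(\g,e)\hookrightarrow U(\g)/I_\chi$; it suffices that $U(\ufk)\cap I_\chi=0$ in $U(\g)$, which follows from the PBW-type description of $Q_\chi=U(\g)/I_\chi$: ordering a basis of $\g$ with a basis of $\mf m$ first and a basis of $\ufk$ next, the ordered monomials not involving $\mf m$ descend to a basis of $Q_\chi$, and those involving only $\ufk$ are a fortiori linearly independent there.

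The step I expect to be the real obstacle is the simultaneous $\mf t$- and $\ufk$-stable choice of $l$, equivalently the claim that the symplectic $\ufk$-module $\g(-1)$ admits an invariant Lagrangian. Since $\ufk$ centralizes the whole $\mf{sl}(2)$-triple, $\ad\,f$ and the invariant form identify $\g(-1)$, $\g(1)$ and $\g(-1)^{\ast}$ as $\ufk$-modules, and in the explicit models of the super-duality pairs $\g(-1)$ breaks up into dual pairs of $\ufk$-modules, for which such a Lagrangian is manifest; this is the point where the concrete structure of the pairs $(\g_n,\sg_n)$ is used. (Alternatively one could work throughout with an even good grading, for which $\g(-1)=0$ so the issue disappears, at the cost of invoking independence of $U(\g,e)$ on the good grading.) The case $n=\infty$ needs nothing new: $\ufk=\gl(\infty)=\varinjlim\gl(n)$ still commutes with $\mf H$ and with $\{e,h,f\}$ in $\g_\infty$ (resp.\ $\sg_\infty$), so the construction applies verbatim and $U(\gl(\infty))=\varinjlim U(\gl(n))$ embeds into $U(\g_\infty,e)$ (resp.\ $U(\sg_\infty,e)$).
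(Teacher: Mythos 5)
Your proposal is correct and follows essentially the same route as the paper: the crux in both is reducing the claim to choosing the Lagrangian $l\subseteq\g(-1)$ to be $\gl(n)$-invariant, after observing $\ufk\subseteq\g(0)^e$ and that $\chi([\mf m,\ufk])=0$. The only minor difference is how the self-duality $\g(-1)\cong\g(-1)^*$ as a $\ufk$-module is obtained (you use that $\ufk$ centralizes the $\mf{sl}(2)$-triple and apply $\ad f$; the paper uses $\mf t$-invariance of $\omega_\chi$ together with the fact that $\mf t$ contains a Cartan of $\gl(n)$), and both arguments then appeal to the explicit decomposition of $\g(-1)$ into copies of $\C^n$, $\C^{n*}$ and trivial modules.
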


\begin{proof}
We prove for $U(\g_n,e)$ only; the proof for  $U(\sg_n,e)$ is the same. To simplify notation, for the remainder of the proof, we let $\g$ stand for $\g_n$.

In the case when the Dynkin grading is even, we have $\g(0)^e\subseteq U(\g,e)$, which follows from the fact that $\chi([X,Y])=0$, for $X\in\g(0)^e$ and $Y\in\mf m$.
Since the $[e,\gl(n)]=0$ and $\gl(n)\subseteq\g(0)$, the lemma follows in this case.

In the case when the Dynkin grading is not even, in light of the previous discussion, it therefore suffices to prove that, in the construction of the $W$-(super)algebra, we may choose the Lagrangian subspace in $l\subseteq \g(-1)$ so that it is $\mf \gl(n)$-invariant. This in indeed can be accomplished, and can be seen as follows. From the classification of good gradings (which includes all the Dynkin gradings)  of these Lie (super)algebras in \cite{EK05, BG07, Ho12} one sees that $\g(-1)$, as a $\gl(n)$-module, is a direct sum of copies of $\C^n$, $\C^{n*}$ or the trivial representation. Now, the action of $\mf t$ on $\g(-1)$ preserves the symplectic form $\omega_\chi$, and hence $\g(-1)^*\cong\g(-1)$ as a $\mf t$-module. As $\mf t$ contains a Cartan of $\gl(n)$, it follows that $\g(-1)^*\cong\g(-1)$ as a $\gl(n)$-module. As the space $\g(-1)$ is even dimensional, this allows us to choose the subspace $l$ to be $\gl(n)$-invariant.
\end{proof}

Let $e$ be of standard Levi type in $\mf{H}$ as above and suppose that it gives rise to the nilcharacter $\zeta:\mf n\rightarrow\C$. That is, $e$ is principal nilpotent in a Levi subalgebra of $\mf H$, and hence in a Levi subalgebra $\fl$ of $\g_n$ and $\sg_n$, and $\zeta$ is defined to take value $1$ on the simple root vectors in $\mf l$ and $0$ elsewhere, see \eqref{eq:zeta}.
Recall our equivalence of categories for Whittaker categories of $\g_n$-modules $\mc W(\zeta)_n^+$ and of $\sg_n$-modules $\ov{\mc W}(\zeta)_n^+$ in Theorem \ref{thm:SD:MMS}. In the case of finite $n$, they are subcategories of $\mc W(\zeta)_n$ and $\ov{\mc W}(\zeta)_n$, which, in turn, are subcategories of MMS categories $\MS(\zeta)\subseteq\mf {W}(\theta,e)$, respectively. The proof of the super analogue of Losev's decomposition theorem in \cite[Theorem~ 1.6]{SX20} works in the setting of these Lie superalgebras $\g_n$ and $\sg_n$ in the limit $n\to\infty$, as the (super)symplectic spaces $V=[f,\g_n]$ and $V=[f,\sg_n]$ are both finite-dimensional. Thus, we can restrict the equivalence of categories $\mc K$ in \eqref{equiv:mcK} to the subcategories $\mc W(\zeta)_n^+$ and $\ov{\mc W}(\zeta)_n^+$. Therefore, Theorem \ref{thm:SD:MMS} and Theorem~\ref{thm:isoK} imply the following equivalence of categories in the limiting case $n\to\infty$.

\begin{thm}\label{thm:equiv:walg}
    There exists an equivalence of categories between the category $\mc K(\mc W(\zeta)^+)$ of $U(\g,e)$-modules and the category $\mc K(\ov{\mc W}(\zeta)^+)$ of $U(\sg,e)$-modules under which the parabolic Verma modules correspond.
\end{thm}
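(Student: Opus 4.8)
The plan is to obtain the equivalence by restricting the category equivalence $\mc K$ of Theorem~\ref{thm:isoK} to the polynomial Whittaker subcategories and then composing it with the super duality for Whittaker modules of Theorem~\ref{thm:SD:MMS}, tracking what happens to the (parabolic) standard objects along the way.

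First I would check that the Losev--Shu--Xiao decomposition of Theorem~\ref{thm:catOW}, and hence the equivalence $\mc K$ of \eqref{eq:equivK}--\eqref{equiv:mcK}, is available for both $\g=\g_\infty$ and $\g=\sg_\infty$. The one point requiring care is that these Lie (super)algebras are infinite-dimensional; but, as noted in the discussion preceding the statement, the $\mf{sl}(2)$-triple $\{e,h,f\}$ lies in the finite-dimensional head subalgebra $\mf H$, so the symplectic superspaces $V=[f,\g_\infty]$ and $V=[f,\sg_\infty]$ remain finite-dimensional, and the completion and star-product arguments in the proof of \cite[Theorem~1.6]{SX20} are local at the point $\chi$ and go through verbatim. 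This produces, for $\g_\infty$ and for $\sg_\infty$, the commutative diagram \eqref{equiv:mcK} of Theorem~\ref{thm:isoK}.

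Next I would restrict and compose. Since $\mc W(\zeta)^+$ is a full subcategory of $\MS(\zeta)\subseteq\W(\theta,e)\subseteq\widetilde{\W}(\theta,e)$ and $\mc K$ carries $\MS(\zeta)$ onto $\mc O'(\theta,e)$, the equivalence $\mc K$ restricts to an equivalence of $\mc W(\zeta)^+$ onto the full subcategory $\mc K(\mc W(\zeta)^+)$ of $\mc O'(\theta,e)$, and likewise for $\sg_\infty$; the preceding lemma, that $U(\gl(n))$ embeds into both $U(\g_n,e)$ and $U(\sg_n,e)$, shows that these images may also be described intrinsically by a polynomiality condition over $\gl(n)$, though this is not needed. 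Composing with Theorem~\ref{thm:SD:MMS} gives a chain of equivalences
\[
\mc K(\mc W(\zeta)^+)\stackrel{\cong}{\longrightarrow}\mc W(\zeta)^+\stackrel{\cong}{\longrightarrow}\ov{\mc W}(\zeta)^+\stackrel{\cong}{\longrightarrow}\mc K(\ov{\mc W}(\zeta)^+),
\]
where the first arrow is a quasi-inverse of $\mc K$ for $\g_\infty$, the middle arrow is the super duality equivalence, and the last arrow is $\mc K$ for $\sg_\infty$; the composite is the desired equivalence.

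Finally I would match up the parabolic Verma modules. Theorem~\ref{thm:SD:MMS} sends $N(\la,\zeta)$ to $\ov N(\la^\natural,\zeta)$ for $\la\in\Lambda(\zeta)^+$, and the argument of Corollary~\ref{cor::verma} --- namely the compatibility of $\mc K$ with parabolic induction, giving $\mc K\circ M^{\theta,e}\cong\mc M^{\theta,e}$ --- shows that $\mc K$ carries each parabolic standard Whittaker module to the corresponding parabolic Verma module of the relevant finite $W$-(super)algebra. Hence the composite sends the parabolic Verma module of $U(\g_\infty,e)$ labelled by $\la$ to that of $U(\sg_\infty,e)$ labelled by $\la^\natural$. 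I expect the only non-routine step to be the first one, verifying that Theorem~\ref{thm:catOW} survives the infinite-rank limit; the finite-dimensionality of $V$ is what makes this work, and everything after it is formal manipulation with the equivalences already established.
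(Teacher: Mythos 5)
Your proposal is correct and follows essentially the same route as the paper: verify that the Losev--Shu--Xiao decomposition (and hence $\mc K$) survives the limit $n\to\infty$, restrict $\mc K$ to the polynomial Whittaker subcategories, compose with the super duality of Theorem~\ref{thm:SD:MMS}, and match parabolic Verma modules via $\mc K\circ M^{\theta,e}\cong\mc M^{\theta,e}$ as in Corollary~\ref{cor::verma}. The paper's only additional content is a concrete choice of $\theta$ (regular on $\mf H$, singular on $\ufk$) and an explicit description of the inducing $U(\fl,e)$-modules when spelling out the Verma correspondence, which your argument subsumes.
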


Below, we shall provide additional detail on the correspondence between the parabolic Verma modules in Theorem \ref{thm:equiv:walg}. For the sake of concreteness, we take an integral element $\theta$ satisfying the following: (1) $\theta$ is non-negative on all simple roots, (2) $\theta$ is regular on $\mf{H}$ and singular on $\ufk$, and (3) $\theta$ is positive on all positive roots not lying in $\mf{H}$ and $\ufk$. Recall the categories $\mc O'(\theta,e)$ and $\ov{\mc O}'(\theta,e)$ of $U(\g,e)$- and $U(\sg,e)$-modules. For $M\in {\mc O}'(\theta,e)$ and $\ov{M}\in \ov{\mc O}'(\theta,e)$, $\text{Res}^{U(\g,e)}_{U(\ufk)}M$ and $\text{Res}^{U(\sg,e)}_{U(\ufk)}\ov{M}$ are direct sums of finite-dimensional irreducible $\ufk$-modules. We define the full subcategories $\mc O(\theta,e)^+\subset \mc O'(\theta,e)$ and $\ov{\mc O}(\theta,e)^+\subset\ov{\mc O}'(\theta,e)$ consisting of objects $M$ such that the $\ufk$-action is polynomial. We then have $\mc K(\mc W(\zeta)^+)\subset \mc O(\theta,e)^+$ and $\mc K(\ov{\mc W}(\zeta)^+)\subset \ov{\mc O}(\theta,e)^+$. As we have observed in the proof of Corollary \ref{cor::verma}, we have $\mc K\circ M^{\theta,e}\cong\mc M^{\theta,e}$, and hence we conclude that the parabolic standard Whittaker modules in $\mc W(\zeta)^+$ (respectively, $\ov{\mc W}(\zeta)^+$) correspond to the parabolic Verma modules in $\mc O'(\theta,e)$ (respectively, $\ov{\mc O}(\theta,e)$) such that the $\ufk$-action is polynomial. They are parabolically induced as in \eqref{eq::ind:w}, where $V$ therein is now replaced by a tensor product of an irreducible polynomial $\ufk$-module and a one-dimensional $U(\fl\cap\mf{H},e)$-module (corresponding, under $\texttt{Sk}$, to an irreducible Kostant $(\fl\cap\mf H)$-module). Since the parabolic standard Whittaker modules in $\mc W(\zeta)^+$ and $\ov{\mc W}(\zeta)^+$ correspond under super duality, it follows that the parabolic Verma modules correspond as well.

\begin{rem}
    Via the equivalences of categories $\mc K$ in \eqref{equiv:mcK}, we can reformulate \eqref{eq:VWzeta} as
    \begin{align*}
[\mc K({\mc W}(\zeta)^+)] &\cong S^{m_\zeta}\VV  \otimes \wedge^{\infty}\VV,
\qquad
[\mc K(\ov{\mc W}(\zeta)^+)] \cong S^{m_\zeta}\VV \otimes \wedge^{\infty}\WW.
    \end{align*}
\end{rem}

\begin{rem}
    \label{rem:GG2}
    For integral central characters, the Grothendieck groups of $\mc K(\mc W(\zeta)^+)$ and $\mc O(\theta,e)^+$  coincide as these two categories have the same simple objects. A parallel statement holds for $\mc K(\ov{\mc W}(\zeta)^+)\subset \ov{\mc O}(\theta,e)^+$.
\end{rem}

\bibliographystyle{alpha}
\bibliography{references.bib}

\end{document}